\newcommand{\bb}{\mathbb}
\newcommand{\p}{\overrightarrow{p}}
\newcommand{\x}{\overrightarrow{x}}
\newcommand{\C}{\bb C}
\newcommand{\h}{\bb H}
\newcommand{\Z}{\bb Z}
\newcommand{\R}{\bb R}
\newcommand{\N}{\bb N}
\newcommand{\Q}{\bb Q}
\newtheorem{obs}{Observation}
\newcommand{\La}{\Lambda}
\DeclareMathOperator{\area}{area}
\newcommand{\eps}{\varepsilon}
\newcommand{\Om}{\Omega}
\newcommand{\B}{\mathcal B}
\newcommand{\F}{\mathcal F}
\newcommand{\vv}{\mathbf{v}}
\newtheorem{Theorem}{Theorem}
\numberwithin{Theorem}{section}
\newtheorem{Cor}[Theorem]{Corollary}
\newtheorem{Prop}[Theorem]{Proposition}
\newtheorem{lemma}[Theorem]{Lemma}
\newtheorem*{lemma*}{Lemma}
\newtheorem*{question*}{Question}
\newtheorem*{conj}{Conjecture}
\newtheorem*{theorem*}{Theorem}
\numberwithin{equation}{section}
\begin{document}
\title[A Poincar\'e section for horocycle flow]{A Poincar\'e section for horocycle flow on the space of lattices}
\author{J.~S.~Athreya}
\author{Y.~Cheung}
\subjclass[2000]{primary: 37A17; secondary 37-06, 37-02}
\email{jathreya@illinois.edu}
\email{ycheung@sfsu.edu}
\address{Department of Mathematics, University of Illinois Urbana-Champaign, 1409 W. Green Street, Urbana, IL 61801}
\address{Department of Mathematics, San Francisco State University, Thornton Hall 937,
1600 Holloway Ave, San Francisco, CA 94132}
\begin{abstract}
We construct a Poincar\'e section for the horocycle flow on the modular surface $SL(2, \R)/SL(2, \Z)$, and study the associated first return map, which coincides with a transformation (the {\it BCZ map}) defined by Boca-Cobeli-Zaharescu~\cite{BCZ}. We classify ergodic invariant measures for this map and prove equidistribution of periodic orbits. As corollaries, we obtain results on the average depth of cusp excursions and on the distribution of gaps for Farey sequences and slopes of lattice vectors.
\end{abstract}
\maketitle

\section{Introduction}\label{sec:intro} Let $X_2 = SL(2, \R)/SL(2, \Z)$ be the space of unimodular lattices in $\R^2$. $X_2$ is a non-compact finite-volume (with respect to Haar measure on $SL(2, \R)$) homogeneous space. $X_2$ can also be viewed as the unit-tangent bundle to the hyperbolic orbifold $\h^2/SL(2, \Z)$. The action of various one-parameter subgroups of $SL(2, \R)$ on $X_2$ via left multiplication give several important examples of dynamics on homogeneous spaces, and have close links to geometry and number theory.

For example, the action of the subgroup $$A: = \left\{g_t = \left(\begin{array}{cc}e^{t/2} & 0 \\0 & e^{-t/2}\end{array}\right): t \in \R \right\}$$ yields the \emph{geodesic flow} on $X_2$, whose orbits are hyperbolic geodesics when projected to $\h^2/SL(2,\Z)$. This flow can be realized as an \emph{suspension flow} over (the natural extension of) the Gauss map $G(x) = \left\{ \frac{1}{x} \right\},$ and this connection can be exploited to give many connections between the theory of continued fractions and hyperbolic geometry (see the beautiful articles by Series~\cite{Series} or Arnoux~\cite{Arnoux} for very elegant expositions). 

In this paper, we study the \emph{horocycle flow}, that is, the action of the subgroup $$N =  \left\{h_s = \left(\begin{array}{cc}1 & 0 \\-s& 1\end{array}\right): s \in \R \right\}.$$ The main result (Theorem~\ref{theorem:main}) of this paper displays the horocycle flow as a suspension flow over the \emph{BCZ map}, which was constructed by Boca-Cobeli-Zaharescu~\cite{BCZ} in their study of Farey fractions. Equivalently, we construct a transversal to the horocycle flow so that the first return map is the BCZ map. This enables us to use well-known ergodic and equidistribution properties of the horocycle flow to derive equivalent properties for the BCZ map (in particular that it is ergodic, zero entropy, and that periodic orbits equidistribute), and give a unified explanation of several number-theoretic results on the statistical properties of Farey gaps. We also give a `piecewise-linear' description of the cusp excursions of the horocycle flow, and derive several new results on the geometry of numbers relating to gaps of slopes of lattice vectors.

\subsection{Plan of paper}\label{subsec:plan} We describe the organization of the paper, and also give a guide for readers.

\subsubsection{Organization}This paper is organized as follows: in the remainder of the introduction (\S\ref{sec:intro}), we state our results: in \S\ref{subsec:transversal}, we describe the transversal to the horocycle flow, and state Theorem~\ref{theorem:main}. In \S\ref{subsec:return}, we discuss the ergodic properties of the BCZ map (\S\ref{subsubsec:bcz}) and the structure and equidistribution of periodic orbits (\S\ref{subsubsec:period}). A piecewise-linear description of horocycle cusp excursions is given in \S\ref{subsubsec:plh}; a unified approach to Farey statistics is described in \S\ref{subsubsec:fareystat}; and a similar approach to statistics of gaps in slopes of lattice vectors is the subject of \S\ref{subsubsec:geomnumbers}. In \S\ref{sec:transversal}, we prove Theorem~\ref{theorem:main}, and show how to describe it using Euclidean and hyperbolic geometry. The structure of periodic orbits is the subject of \S\ref{sec:period}; and in \S\ref{subsec:equidist}, these structure results are used to obtain equidistribution properties and the corollaries on Farey statistics in \S\ref{sec:farey}. \S\ref{sec:ergbcz} contains the proof of ergodicity and the calculation of entropy of the BCZ map. We prove our results on cusp excursions in \S\ref{sec:plh}, and in \S\ref{sec:geomnum} we prove our results on geometry of numbers.  Finally, in \S\ref{sec:further}, we outline some questions and directions for future research.

\subsubsection{Readers guide} Since this paper touches on several different topics, it can be read in several different ways. We suggest different approaches for the ergodic-theoretic and number-theoretic minded readers. We recommend that the ergodic-theoretic reader start with sections \S\ref{subsec:transversal} and \S\ref{subsec:return} (and perhaps \S\ref{subsubsec:plh}), and follow it with \S\ref{sec:transversal}, \S\ref{sec:ergbcz} and \S\ref{sec:plh} before exploring the more number theoretic parts of the paper. The number-theoretic reader should also start with \S\ref{subsec:transversal} and \S\ref{subsec:return}, but then may be more intrigued by the results of \S\ref{subsubsec:period}, \S\ref{subsubsec:fareystat}, and \S\ref{subsubsec:geomnumbers} and their proofs in \S\ref{sec:period},\S\ref{sec:farey}, and \S\ref{sec:geomnum} respectively.

\subsection{Description of transversal}\label{subsec:transversal} Recall that $X_2$ can be explicitly identified with the space of unimodular lattices via the identification $$gSL(2, \Z) \leftrightarrow g\Z^2.$$ Let \begin{equation}\label{eq:P}P = \left\{ p_{a, b} = \left(\begin{array}{cc}a & b \\0 & a^{-1}\end{array}\right) : a \in \R^*, b \in \R\right\}\end{equation} denote the group of upper-triangular matrices in $SL(2, \R)$. Let \begin{equation}\label{eq:omega}\Om : = \left\{ p_{a, b} SL(2, \Z): a, b \in (0, 1], a+b > 1\right\} \subset X_2.\end{equation} By abuse of notation, we also use $\Om$ to denote the subset $$\{ (a, b) \in \R^2: a, b \in (0, 1], a+b >1\} \subset \R^2.$$ In \S\ref{sec:transversal}, we will show that $\Om$ can be viewed as the set of lattices with a horizontal vector of length at most $1$, can also be identified with the subset $\left\{z = x+iy \in \h^2: |x| < \frac{1}{2}, y>1, |z| >1\right\} \subset \h^2$ of the upper-half plane. Our main theorem is that the $\Om$ is a Poincar\'e section for the horocycle flow, and that the first return map is the BCZ map. We see the space $\Om$ together with the roof in Figure~\ref{fig:bczroof}.

\begin{figure}[htbp]
\begin{center}

\caption{A picture of the suspension space over $\Om$. Trajectories of the flow are vertical lines. Starting from $(a, b) \in \Om$, $h_s$ trajectories move vertically until they hit the `roof' (at time $R(a,b)$) upon which they return to the floor at position $T(a,b)$.}\label{fig:bczroof}
\includegraphics[height =80mm, width=100mm]{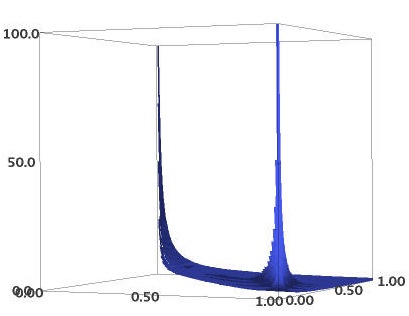}
\end{center}
\end{figure}

\begin{Theorem}\label{theorem:main} $\Om \subset X_2$ is a Poincar\'e section for the action of $N$ on $X_2$. That is, every $N$-orbit $\{h_s \La\}_{s \in \R}$ (with the exception of the codimension $1$ set of lattices $\La$ with a length $\le 1$ vertical vector), $\La \in X_2$, intersects $\Om$ in a non-empty, countable, discrete set of times $\{s_n\}_{n \in \Z}$. Given $\La_{a, b} = p_{a, b} SL(2, \Z)$, the first return time $R(a, b) = \min\{ s>0: h_s \La_{a, b} \in \Om\}$ is given by \begin{equation}\label{eq:return}R(a, b) = \frac{1}{ab}.\end{equation} The first return map $T: \Om \rightarrow \Om$ defined implicitly by $$\La_{T(a, b)} = h_{R(a,b)} x _{a, b}$$ is given explicitly by the BCZ map \begin{equation}\label{eq:bcz}T(a, b) = \left(b, -a + \left\lfloor\frac{1+a}{b}\right\rfloor b \right)\end{equation}

\end{Theorem}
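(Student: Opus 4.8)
The plan is to work entirely with lattices and the explicit matrices $p_{a,b}$, so that everything reduces to elementary linear algebra in the plane.

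\emph{Step 1 (geometric meaning of $\Om$).} I would first show that $(a,b)\mapsto \La_{a,b}:=p_{a,b}\Z^2$ is a bijection from $\{(a,b): a,b\in(0,1],\ a+b>1\}$ onto the set of unimodular lattices containing a nonzero horizontal vector of length $\le 1$. Given such a lattice $\La$, let $(a,0)$ with $a>0$ be its primitive horizontal vector, so $a\le 1$; since $\La$ has a horizontal vector, its projection to the $y$-axis is discrete, and unimodularity forces the smallest positive value of the second coordinate over $\La\setminus\{0\}$ to be $a^{-1}$, attained exactly on a coset $x_0+a\Z$ of first coordinates. The half-open interval $(1-a,1]$ has length $a\le 1$, hence meets that coset in a single point $b$, and then $(a,0)$ and $(b,a^{-1})$ form a basis of $\La$, i.e.\ $\La=p_{a,b}\Z^2$ with $(a,b)\in\Om$. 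Conversely $p_{a,b}\Z^2$ obviously contains the horizontal vector $(a,0)$, and the constraints $b\le 1$ and $a+b>1$ are precisely what make $(a,0)$ primitive and $(b,a^{-1})$ the normalized ``adjacent'' vector, which gives injectivity. (The same data give the identification of $\Om$ with the hyperbolic region stated in the introduction; I would record it, but it is not needed for the rest.)

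\emph{Step 2 (returns are slopes).} Because $h_s$ sends $(x,y)$ to $(x,\,y-sx)$, the flow $N$ fixes first coordinates and turns a lattice vector $(x,y)$ horizontal exactly at time $s=y/x$ (so $x\neq 0$), at which instant the horizontal vector has length $|x|$. Combined with Step~1: $h_s\La\in\Om$ if and only if $\La$ has a vector $(x,y)$ with $0<|x|\le 1$ and $s=y/x$. In any bounded window $[-M,M]$ this set of $s$ is finite (only finitely many lattice vectors satisfy $|x|\le 1$ and $|y|\le M$), hence discrete; it is nonempty and doubly infinite unless the $x$-axis projection of $\La$ is a coarse lattice $\delta\Z$ with $\delta>1$, which is exactly the situation of a short vertical vector singled out in the statement. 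This establishes the first assertion of the theorem.

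\emph{Step 3 (the two formulas).} The vectors of $\La_{a,b}$ are $(am+bn,\ a^{-1}n)$, so by Step~2 the positive return times are the numbers $\dfrac{n}{a(am+bn)}$ with $n\ge 1$ and $0<am+bn\le 1$ (the values with $n<0$ duplicate these). The hypothesis $a+b>1$ now does the work: it rules out $m\ge 1$ (else $am+bn\ge a+b>1$), and for $m\le 0$ one has $\dfrac{n}{a(am+bn)}\ge\dfrac{1}{ab}$ with equality only at $(m,n)=(0,1)$. Hence $R(a,b)=\dfrac{1}{ab}$, achieved when the vector $(b,a^{-1})$ becomes horizontal. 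Substituting $s=1/(ab)$ sends $(am+bn,\ a^{-1}n)$ to $(am+bn,\ -m/b)$, so $h_{R(a,b)}\La_{a,b}=\{(am+bn,\ -m/b):m,n\in\Z\}$; its primitive horizontal vector is $(b,0)$ (the case $m=0$), and by the normalization of Step~1 its adjacent vector is $(-a+kb,\ b^{-1})$, where $k$ is the unique integer with $-a+kb\in(1-b,1]$, namely $k=\bigl\lfloor\frac{1+a}{b}\bigr\rfloor$. Therefore $T(a,b)=\bigl(b,\,-a+\bigl\lfloor\frac{1+a}{b}\bigr\rfloor b\bigr)\in\Om$, which is the BCZ map.

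\emph{Where the difficulty lies.} Steps~1 and~2 are bookkeeping about reduced bases of rank-two lattices and the coordinate action of $h_s$; the real content is the minimization in Step~3, in particular the observation that the innocuous-looking inequality $a+b>1$ is exactly what prevents any lattice vector from rotating into horizontal position before $(b,a^{-1})$ does. The only genuine subtlety is keeping the boundary conventions coherent — the half-open intervals, primitivity, and the precise exceptional set — but none of that is deep.
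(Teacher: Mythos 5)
Your proof is correct and follows the same basic architecture as the paper's: identify $\Om$ with horizontally short lattices (your Step~1 corresponds to Lemma~\ref{lem:short}), show each such lattice's $h_s$-orbit meets $\Om$ discretely, and compute the return time and return map by tracking when the basis vector $(b,a^{-1})$ becomes horizontal (Lemma~\ref{lemma:roof}). Two details differ. First, where the paper invokes Minkowski's convex body theorem on $[-1,1]^2$ (Lemma~\ref{lemma:poincare}) to guarantee that every non-vertically-short orbit hits $\Om$, you argue via the $x$-axis projection of $\La$, which is either dense (no vertical vector) or a lattice $\delta\Z$ with $\delta\le 1$; both routes produce infinitely many lattice vectors with $x\in(0,1]$ and hence a bi-infinite discrete set of hitting times. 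Second, your Step~3 supplies a justification that the paper's Lemma~\ref{lemma:roof} states but does not spell out, namely that $(b,a^{-1})$ is the \emph{first} vector of $\La_{a,b}$ to become horizontal: you run an honest minimization over the candidate times $n/(a(am+bn))$, using $a+b>1$ to exclude $m\ge1$ and the sign of $am$ to dispose of $m\le0$, which is really the substantive content of the return-time formula and is worth making explicit. (Tiny slip: equality with $1/(ab)$ holds for every $(m,n)=(0,n)$ with $bn\le1$, not only $(0,1)$, but the minimum and hence the conclusion are unaffected.)
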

\bigskip
\noindent\textbf{Remarks:} 

\begin{description}

\item[Short periodic orbits] We will see below that lattices with a length $\le 1$ vertical vector consist of those whose horocycle orbits are periodic of period at most $1$, and are embedded closed horocycles in $\h^2/SL(2, \Z)$, foliating the cusp. Thus, from a dynamical point of view, there is no loss in missing them.
\medskip
\item[Integrability of $R$] A direct calculation shows that $\int_{\Om} R 2 da db = \frac{\pi^2}{3},$ which is the volume of $X_2$ when viewed as the unit tangent bundle of $\h^2/SL(2, \Z)$ with respect to the measure $\frac{dx dy d\theta}{y^2}$. It is also immediate that $R \in L^p(dm)$ for $p<2$, where $dm = 2dadb$.
\end{description}

\subsection{Return map and applications}\label{subsec:return}
The BCZ map has proved to be a powerful technical tool in studying various statistical properties of Farey fractions~\cite{ABCZ, BCZ, BGZ}, and the distribution of angles of families of hyperbolic geodesics~\cite{BPPZ}. 
\subsubsection{Tiles and images}\label{subsubsec:tiles} We briefly describe the basic structure of the piecewise linear decomposition of $T$. (\ref{eq:bcz}) tells us that the map $T$ acts via $$T(a, b) = (a, b)A_k^T,$$ where $$A_k =  \left(\begin{array}{cc}0 & 1 \\-1 & k\end{array}\right)$$ on the region $\Om_k : = \{(a,b) \in \Om:\kappa(a,b) = k\},$ where $\kappa(a, b) = \left\lfloor\frac{1+a}{b}\right\rfloor$  (see Figure~\ref{fig:farey:triangle} below). $\Om_1$ is a triangle with vertices at $(0, 1)$, $(1, 1)$, and $\left(\frac 1 3, \frac 2 3\right)$; and for $k \geq 2$, $\Om_k$ is a quadrilateral with vertices at $\left(1, \frac 2 k \right)$, $\left(1,\frac{ 2}{k+1}\right)$, $\left( \frac{k-1}{k+1}, \frac{2}{k+1}\right)$, and  $\left( \frac{k}{k+2}, \frac{2}{k+2}\right)$. Note that $\area(\Om_1) = \frac{1}{6}$, andfor $k \geq 2$, $\area(\Om_k)= \frac{4}{k(k+1)(k+2)} = O(k^{-3})$ so that $\kappa\in L^p(m)$ for $1\le p<2$.

\begin{figure}\caption{The Farey triangle $\Om = \bigcup_{k \geq 1} \Om_k$ }\label{fig:farey:triangle}
\begin{tikzpicture}[scale=1.3]
\draw  (-4,0) -- (4,0);
\filldraw[fill=black!20!white] (4,0)--(4,4)--(0,4)--cycle;
\draw (0,0)--(0,4)node[above]{\tiny $(0,1)$};

\foreach \x in {2, 3, 4, 5, 6, 7} \draw[dashed] (-4,-0) -- (4, 8/\x)node[right]{\tiny $2/\x$};

\path (-4, 0) node[below]{\tiny $(-1,0)$};
\path (4, 0) node[right]{\tiny $(1,0)$};

\path (2, 3.5) node{\tiny $\Om_1$};
\path (8/3, 8/3) node{\tiny $\Om_2$};
\path (3, 2) node{\tiny $\Om_3$};
\path (16/5, 8/5) node{\tiny$\Om_4$};

\path (16/5, 7/5) node[right]{$\ddots$};

\path (3.8, .8) node{\tiny $\Om_{\geq 7}$};

\end{tikzpicture}
\end{figure}
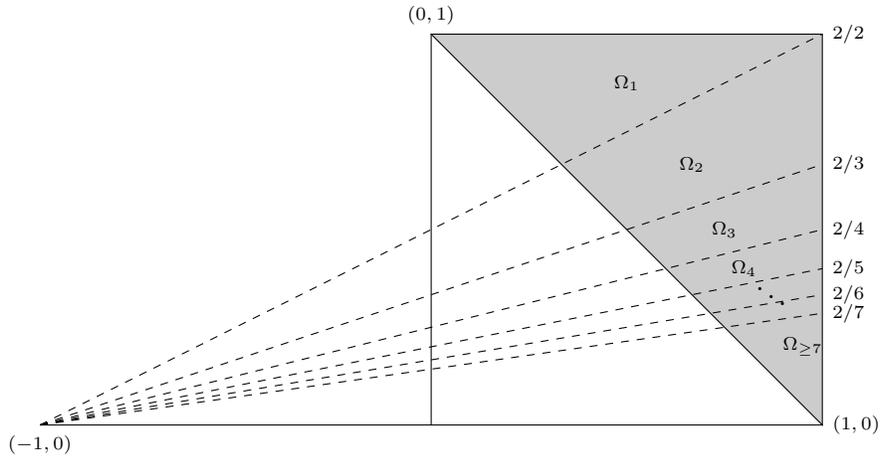
The image $T\Om_k$ is the reflection of $\Om_k$ about the line $a = b$ (see Figure~\ref{fig:farey:image}). However, $T$ is \emph{orientation-preserving}, and thus does not act by the reflection. For $k=1$, $T$ acts by the elliptic matrix $A_1$, for $k=2$ the parabolic matrix $A_2$, and for $k \geq 3$, by the hyperbolic matrices, since $\mbox{trace}(A_k) = k.$

\begin{figure}\caption{The images $T\Om_k$}\label{fig:farey:image}
\begin{tikzpicture}[scale=1.3]
\draw  (0, -4) -- (0,4);
\filldraw[fill=black!20!white] (4,0)--(4,4)--(0,4)--cycle;
\draw (0,0)--(0,4);
\draw (0,0)--(4,0);

\foreach \x in {2, 3, 4, 5, 6, 7} \draw[dashed] (0,-4) -- (8/\x,4)node[above]{\tiny $\frac{2}{\x}$};

\path (3.5, 2) node{\tiny $T\Om_1$};
\path (8/3, 8/3) node{\tiny $T\Om_2$};
\path (2, 3) node{\tiny $T\Om_3$};
\path (8/5, 3.2) node{\tiny$T\Om_4$};
\path (9/7, 21/6) node{$\ddots$};
\path (.8, 3.6) node{\tiny $T\Om_{\geq 7}$};

\path (0, -4) node[below]{\tiny $(0, -1)$};
\path (4, 0) node[right]{\tiny $(1,0)$};

\path (0, 4) node[left]{\tiny $(0,1)$};

\end{tikzpicture}
\end{figure}
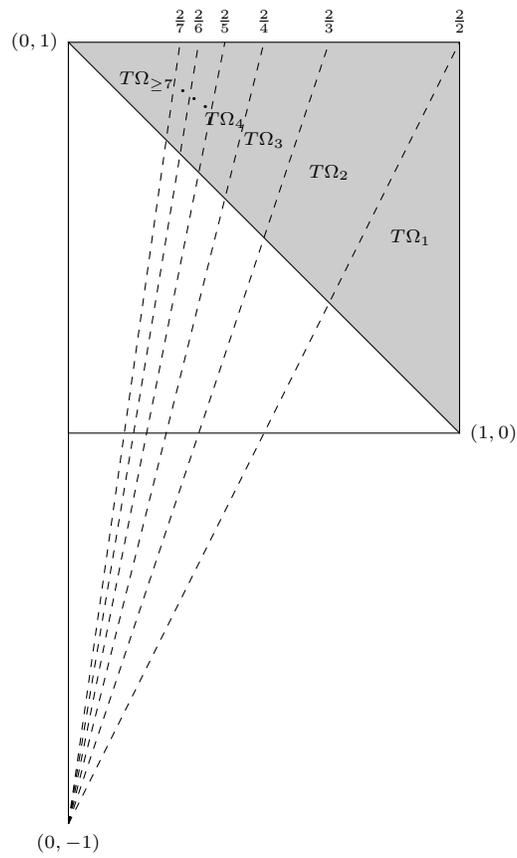

\subsubsection{Ergodic properties of the BCZ map}\label{subsubsec:bcz} In~\cite[\S3]{BZsurvey}, Boca-Zaharescu posed a series of questions on the ergodic properties of $T$:

\begin{question*} Is $T$ ergodic (with respect to Lebesgue measure)? Weak mixing? What is the entropy of $T$?\end{question*}

\noindent A corollary of Theorem~\ref{theorem:main} and the ergodicity and entropy properties of the horocycle flow is:

\begin{Theorem}\label{theorem:BCZ:ergodic} $T$ is an ergodic, zero-entropy map with respect to the Lebesgue probability measure $dm = 2dadb$. Moreover, $m$ is the unique absolutely continuous invariant probability measure, and in fact is the unique ergodic invariant measure not supported on a periodic orbit.\end{Theorem}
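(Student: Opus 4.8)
The plan is to transfer the known ergodic-theoretic properties of the horocycle flow on $X_2$ to the BCZ map $T$ via the suspension picture of Theorem~\ref{theorem:main}. Recall that the horocycle flow $(h_s)$ on $X_2$, equipped with Haar probability measure $\mu_{\mathrm{Haar}}$, is ergodic (Hedlund) and has zero entropy (Gurevich). Since Theorem~\ref{theorem:main} identifies $X_2$ (minus a measure-zero set of short periodic orbits) with the suspension of $(\Om, T, m)$ under the roof function $R(a,b) = 1/(ab)$, and since $\int_\Om R\, dm = \pi^2/3 = \mathrm{vol}(X_2) < \infty$, the standard correspondence between a flow and its cross-section applies: the normalized Haar measure on the suspension pulls back to $\frac{1}{\int R\,dm}\,(m \times ds)$, whose projection to $\Om$ is the normalized $m$. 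First I would make this correspondence precise, verifying that the cross-section $\Om$ has $m$ equal (up to normalization) to the transverse invariant measure induced by $\mu_{\mathrm{Haar}}$, i.e.\ that $T$ is the first-return map of an ergodic finite-measure-preserving flow.

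\textbf{Ergodicity.} Given this, ergodicity of $T$ follows from ergodicity of the flow: a $T$-invariant measurable set $E \subset \Om$ generates the flow-saturated set $\tilde E = \{(x,u) : x \in E,\ 0 \le u < R(x)\}$, which is $(h_s)$-invariant up to the null set of short periodic orbits; ergodicity of $(h_s, \mu_{\mathrm{Haar}})$ forces $\mu_{\mathrm{Haar}}(\tilde E) \in \{0,1\}$, hence $m(E) \in \{0,1\}$.

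\textbf{Zero entropy.} For entropy, I would invoke Abramov's formula: the Kolmogorov--Sinai entropy of the time-one map of the suspension flow equals $h_{KS}(T)/\int_\Om R\,dm$. Since the horocycle flow has zero entropy (equivalently its time-one map does), and $\int_\Om R\,dm$ is finite and positive, Abramov's formula gives $h_{KS}(T) = 0$ directly. (Alternatively, one can cite that any loosely Bernoulli / zero-entropy structure is inherited by cross-sections.)

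\textbf{Uniqueness of the invariant measure.} The substantive part is the classification of ergodic invariant measures. Here I would use Dani's classification of ergodic $N$-invariant probability measures on $X_2$: every such measure is either the Haar measure or is supported on a periodic horocycle orbit. Now suppose $\nu$ is an ergodic $T$-invariant probability measure on $\Om$ with $\nu$-generic point having finite return time (automatic since $R < \infty$ everywhere, and $R \in L^1(\nu)$ must be checked — this is the one genuine technical point, since $R$ is unbounded near the corner $a=b=0$, though that corner is not in $\Om$; in fact $R$ is bounded on $\Om$ away from... no — $R = 1/(ab)$ is unbounded as $b \to 0$, so $L^1(\nu)$-integrability of $R$ is a real hypothesis to dispatch). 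Assuming $R \in L^1(\nu)$, the suspension of $(\Om,T,\nu)$ under $R$ carries an $N$-invariant probability measure $\tilde\nu$ on $X_2$; by Dani, $\tilde\nu$ is Haar or periodic. If Haar, then $\nu = m$ by the correspondence above. If $\tilde\nu$ is supported on a periodic $N$-orbit of period $p > 1$, that orbit meets $\Om$ in finitely many points (the return times partition $[0,p)$ into finitely many intervals since $R$ is bounded below on $\Om$), so $\nu$ is supported on a periodic orbit of $T$. To handle the integrability gap, I would argue that if $R \notin L^1(\nu)$ then by the ergodic theorem the return-time Birkhoff sums grow superlinearly and one can still build an infinite $N$-invariant measure, which by Dani-type results (or a direct argument: an $N$-ergodic-invariant measure on $X_2$ giving finite measure to compact sets with $\nu$ conservative) must be Haar, contradicting finiteness of $\int R\,dm$ against $R \notin L^1(\nu)$ — so this case does not occur. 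In particular $m$ is the unique absolutely continuous invariant probability measure, since any a.c.\ invariant measure decomposes into ergodic components each of which is a.c., hence not atomic, hence (by the above) equal to $m$.

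The main obstacle I anticipate is the integrability subtlety: cleanly ruling out ergodic $T$-invariant measures $\nu$ with $\int_\Om R\,d\nu = \infty$. One route is to note that such a $\nu$ cannot be $m$ (by Abramov/direct computation the suspension would have infinite mass matching $\mu_{\mathrm{Haar}}$, contradiction) and cannot be periodic (periodic orbits of $T$ have $R$-sum equal to the finite $N$-period), and then show no other ergodic $\nu$ exists by lifting to a $\sigma$-finite $N$-invariant measure on $X_2$ and applying Dani's measure classification in the form that covers $\sigma$-finite conservative measures (or Ratner's theorem), concluding the lift is Haar and deriving a contradiction with $\int R\,dm < \infty$. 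The rest of the argument is a routine application of the flow/cross-section dictionary plus Abramov's formula.
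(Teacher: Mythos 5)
Your overall strategy matches the paper's: realize $X_2$ as a suspension over $(\Om, T, m)$ via Theorem~\ref{theorem:main}, transfer ergodicity and measure-classification from $(h_s, \mu_2)$ via Dani's theorem, and use Abramov's formula for entropy. The ergodicity and uniqueness arguments you give are essentially the paper's; your explicit attention to the case $R\notin L^1(\nu)$ (where the suspension measure $d\nu\,dt$ may be infinite and Dani's classification of probability measures does not directly apply) is actually a point the paper glosses over, and your instinct to lift to a $\sigma$-finite $N$-invariant Radon measure and invoke the Radon-measure version of the classification is a reasonable way to close it.

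The genuine gap is in the entropy step. You write that the horocycle flow on $X_2$ ``has zero entropy (Gurevich),'' and then plan to push this through Abramov's formula. But Gur\v{e}vic's theorem establishes $h_{top}(h_1)=0$ only for \emph{compact} quotients $\h^2/\Gamma$, and $X_2 = SL(2,\R)/SL(2,\Z)$ is non-compact. This is precisely why the paper includes a separate result (Theorem~\ref{theorem:horocycle:entropy}) with its own proof, and even remarks that the zero-entropy statement for non-uniform $\Gamma$ ``seems to be well-known but does not have a complete proof in the literature as far as we are aware.'' To close this gap, one cannot simply cite Gur\v{e}vic: the paper instead (i) uses the Handel--Kitchens variational principle for locally compact spaces to reduce the measure-theoretic entropy statement to a topological one, (ii) invokes Bowen's theorem that $C^\infty$ flows have finite topological entropy, and (iii) exploits the conjugacy $g_t h_s g_{-t} = h_{se^{-t}}$ to force $h_{top}(h_1) = s\, h_{top}(h_1)$ for all $s>0$, hence $h_{top}(h_1)=0$. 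Without some such argument, the entropy claim for $T$ is not justified by your proposal.
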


\subsubsection{Periodic orbits}\label{subsubsec:period}The map $T$ has a rich and intricate structure of periodic orbits, closely related to the Farey sequences.  A direct calculation shows that for $Q \in \N$  the point $\La_{\frac1 Q, 1} = p_{ \frac{1}{Q}, 1} \Z^2$ is $h_s$-periodic with (minimal) period $Q^2$.

Given $Q \in \N$, the Farey sequence $\F(Q)$ is the collection (in increasing order) of fractions $0 < \frac{p}{q} < 1$ with $q \le Q$. Let $N = N(Q)$ denote the cardinality of $Q$. We write $\F(Q) = \{ \frac{0}{1} = \gamma_1 < \gamma_2 = \frac{1}{Q} < \ldots <\gamma_{N}\}$. For notational convenience, we write $\gamma_{N +1} = 1 = \frac{1}{1}$. Writing $\gamma_i = \frac{p_i}{q_i}$, with $q_i \le Q$, we have $q_i + q_{i+1} > Q$, and $$a_{i+1} q_i - p_i q_{i+1} = 1.$$ The following fundamental observation is due to Boca-Cobeli-Zaharescu~\cite{BCZ}:
\begin{equation}\label{eq:farey:bcz} T\left(\frac{q_i}{Q}, \frac{q_{i+1}}{Q}\right) = \left( \frac{q_{i+1}}{Q}, \frac{q_{i+2}}{Q}\right).
\end{equation}
The indices are viewed cyclically in $\Z/N\Z$. We give a geometric explanation for (\ref{eq:farey:bcz}) in \S\ref{subsec:fareyperiod} below. Thus, $( \frac{1}{Q}, 1)$ is a periodic point of order $N$ for $T$. To relate this to the periodic orbit of $\La_{1, \frac{1}{Q}}$ for $h_s$, we first record two simple observations. First, $$\left(\frac{q_1}{Q}, \frac{q_2}{Q}\right) = \left(\frac{1}{Q}, 1\right),$$ and second, $$\sum_{i=1}^{N} \frac{1}{q_i q_{i+1}} =\sum_{i=1}^{N} (\gamma_{i+1} -\gamma_{i} ) =  1$$ Multiplying both sides by $Q^2$, and applying (\ref{eq:farey:bcz}), we obtain $$\sum_{i=1}^{N} \frac{Q^2}{q_i q_{i+1}} = \sum_{i=1}^{N} \left(R\circ T^{i-1}\right)\left(\frac{1}{Q}, 1\right) = Q^2.$$ Thus, as we sum the roof function $R$ over the periodic orbit for $T$, we obtain the length of the associated periodic orbit for the flow.

We recall Sarnak~\cite{Sarnak} showed that long periodic orbits for $h_s$ (i.e., long closed horocycles) become equidstributed with respect to $\mu_2$, the Haar measure on $X_2$. Our main result on the distribution of periodic orbits is the following discrete corollary of Sarnak's result: let $\rho_{Q, I}$ denote the probability measure supported on (a long piece of) the orbit of $(\frac{1}{Q}, 1)$: given $I = [\alpha, \beta] \subset [0, 1]$, let $N_I(Q): = |\F(Q) \cap I|$, and define $$\rho_{Q, I} = \frac{1}{N_I(Q)}\sum_{i: \gamma_i \in I} \delta_{T^i( \frac{1}{Q}, 1)}.$$ If $I = [0, 1]$, we write $\rho_{Q, I} = \rho_Q$.

\begin{Theorem}\label{theorem:equidist:periodic} For any (non-empty) interval $I =[\alpha, \beta] \subset [0, 1]$, the measures $\rho_{Q, I}$ become equidistributed with respect to $m$ as $Q \rightarrow \infty$. That is, $\rho_{Q, I} \rightarrow m$, where convergence is in the weak-$^*$ topology.\end{Theorem}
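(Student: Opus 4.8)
The plan is to transfer Sarnak's equidistribution theorem for long closed horocycles through the suspension picture of Theorem~\ref{theorem:main}. Write $dm = 2\,da\,db$ for the Lebesgue probability measure on $\Om$, so $m(\Om) = 2\,\area(\Om) = 1$. Theorem~\ref{theorem:main} presents the horocycle flow as the suspension of $(\Om, T)$ with roof $R$, and since $\int_\Om R\,dm = \tfrac{\pi^2}{3}$ is the total volume of $X_2$, the Haar probability measure $\mu_2$ is the normalized suspension measure $\tfrac{3}{\pi^2}\,(m\otimes dt)$; equivalently, for $F\in C_c(X_2)$,
\[
\int_{X_2} F\, d\mu_2 \;=\; \frac{3}{\pi^2}\int_\Om\!\int_0^{R(a,b)} F\big(h_s\, p_{a,b}\Z^2\big)\, ds\, dm(a,b).
\]
The idea is to test this identity against a function concentrated on a thin flow box over $\Om$, evaluated along the closed horocycle through $\La_{\frac1Q,1}$, and read off the weak-$^*$ limit of $\rho_{Q,I}$.

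First I would set up the dictionary between returns to $\Om$ and flow times. By (\ref{eq:return}) and the Farey identity $p_{i+1}q_i - p_iq_{i+1} = 1$, one has $R\big(\tfrac{q_i}{Q},\tfrac{q_{i+1}}{Q}\big) = \tfrac{Q^2}{q_iq_{i+1}} = Q^2(\gamma_{i+1}-\gamma_i)$, so telescoping (with $\gamma_1 = 0$) gives
\[
h_{\,Q^2\gamma_j}\,\La_{\frac1Q,1} \;=\; T^{\,j-1}\big(\tfrac1Q,1\big) \;=\; \big(\tfrac{q_j}{Q},\tfrac{q_{j+1}}{Q}\big)\in\Om.
\]
Hence, with $j_0 = \min\{j:\gamma_j\ge\alpha\}$ and $j_1 = \max\{j:\gamma_j\le\beta\}$, the visits of this closed horocycle to $\Om$ during the flow-time window $[\,Q^2\gamma_{j_0},\,Q^2\gamma_{j_1+1}\,]$ are exactly the points $T^{j-1}(\tfrac1Q,1)$ with $\gamma_j\in I$, of which there are $N_I(Q)$. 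Let $\nu_{Q,I}$ be the normalized arc-length measure on this sub-arc; its length is $L_Q = Q^2(\gamma_{j_1+1}-\gamma_{j_0})$, and since the largest gap in $\F(Q)$ is $\tfrac1Q\to 0$ we have $\gamma_{j_0}\to\alpha$ and $\gamma_{j_1+1}\to\beta$, so $L_Q\sim(\beta-\alpha)Q^2\to\infty$.

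Next I would choose the test functions. For $\eps\in(0,1)$ the flow box $B_\eps := \{h_s\, p_{a,b}\Z^2 : (a,b)\in\Om,\ 0\le s<\eps\}$ embeds in $X_2$, because the first return time is $\ge 1 > \eps$ by (\ref{eq:return}); let $\pi\colon B_\eps\to\Om$ be the projection $h_s\, p_{a,b}\Z^2\mapsto(a,b)$. Given $g\in C(\Om)$, put $G := \tfrac1\eps\,(g\circ\pi)\,\mathbf{1}_{B_\eps}$; this is bounded, compactly supported, and continuous off the $\mu_2$-null boundary $\partial B_\eps$, hence a continuity point for vague convergence to $\mu_2$ (squeeze $G$ between functions in $C_c(X_2)$). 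Since $R\ge 1>\eps$, integrating $G$ along one return interval over any $\La'\in\Om$ gives $\int_0^{R(\La')}G(h_s\La')\,ds = g(\La')$, and summing over the return intervals in the sub-arc,
\[
\nu_{Q,I}(G) \;=\; \frac{1}{L_Q}\sum_{j:\,\gamma_j\in I} g\big(T^{j-1}(\tfrac1Q,1)\big) \;=\; \frac{N_I(Q)}{L_Q}\,\rho_{Q,I}(g) + o(1),
\]
the $o(1)$ absorbing the harmless relabelling $T^{j-1}\leftrightarrow T^j$ (it alters the index set by $O(1)$ terms). On the other hand the transfer identity gives $\mu_2(G) = \tfrac{3}{\pi^2}\int_\Om g\,dm = \tfrac{3}{\pi^2}\,m(g)$.

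The last ingredient is the equidistribution of the arcs, $\nu_{Q,I}\to\mu_2$. For $I = [0,1]$ the sub-arc is the entire closed horocycle of length $Q^2$, and this is exactly Sarnak's theorem~\cite{Sarnak}; for a proper subinterval it is the corresponding statement for a sub-arc that is a fixed positive proportion of a closed horocycle of length $\to\infty$, which follows from effective/uniform refinements of Sarnak's result. Plugging $g\equiv 1$ into the last two displays forces $N_I(Q)/L_Q\to\tfrac{3}{\pi^2}$, i.e.\ the classical count $N_I(Q)\sim\tfrac{3}{\pi^2}(\beta-\alpha)Q^2$; dividing, for every $g\in C(\Om)$,
\[
\rho_{Q,I}(g) \;=\; \frac{L_Q}{N_I(Q)}\big(\nu_{Q,I}(G)+o(1)\big) \;\longrightarrow\; \frac{\pi^2}{3}\cdot\frac{3}{\pi^2}\,m(g) \;=\; m(g),
\]
so $\rho_{Q,I}\to m$ in the weak-$^*$ topology. (Only vague convergence $\nu_{Q,I}\to c\,\mu_2$ with $c>0$ is used: the constant $c$ divides out when $g$ is compared with the constant function, so escape of mass need not be ruled out, although in fact $c = 1$.) I expect the main obstacle to be this homogeneous-dynamics input for proper subintervals: pinning down and citing the correct ``positive-proportion sub-arc of a long closed horocycle'' sharpening of Sarnak's theorem, with adequate uniformity in the basepoint. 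The remaining pieces (the suspension/Kac formula, the flow-box test functions, and the Farey-gap bookkeeping) are routine.
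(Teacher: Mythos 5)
Your proof is correct and follows essentially the same route as the paper's: both reduce the statement to equidistribution of positive-proportion sub-arcs of long closed horocycles on $X_2$ (Sarnak for $I=[0,1]$; the paper cites Hejhal and Str\"ombergsson for the extension to proper subintervals, which resolves the obstacle you flag at the end), and then transfer to $\Om$ through the suspension picture of Theorem~\ref{theorem:main}. Your thin flow-box test functions $G = \tfrac1\eps\,(g\circ\pi)\,\mathbf{1}_{B_\eps}$ together with the $g\equiv 1$ normalization give a more explicit, self-contained implementation of that transfer step than the paper's rather terse push-forward-by-$\pi$-and-divide-by-$R$ argument, but the underlying idea is the same.
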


\noindent\textit{Remark.}  The case $I=[0,1]$ of Theorem~\ref{theorem:equidist:periodic} was proven 
in \cite{KZ} using different methods.  Also, Theorem~\ref{theorem:equidist:periodic} can be 
deduced from Theorem 6 in \cite{Marklof1}, which is proven in the more general context 
of horospherical flows using a section that reduces to the same one in Theorem~\ref{theorem:main}.

\begin{Theorem}\label{theorem:dense:periodic} A point $(a,b) \in \Om$ is $T$-periodic if and only if it has rational slope. In particular, the set of periodic points is dense. \end{Theorem}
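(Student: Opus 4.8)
The plan is to characterize the $T$-periodic points by passing through the suspension picture of Theorem~\ref{theorem:main}. Since $\Om$ is a Poincar\'e section for $N$ with first-return map $T$, a point $(a,b)\in\Om$ will be $T$-periodic precisely when the horocycle orbit $\{h_s\La_{a,b}\}$ is periodic, and the latter is a condition on $p_{a,b}$ that one can read off directly. Concretely, I expect to prove the chain of equivalences: $(a,b)$ is $T$-periodic $\iff$ there is $S>0$ with $p_{a,b}^{-1}h_S p_{a,b}\in SL(2,\Z)$ $\iff$ $a/b\in\Q$, which is exactly the assertion that $(a,b)$ has rational slope. Density then follows at once, since $\Om\cap\Q^2$ is dense in $\Om$ and every point of it has rational slope.

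For the ``only if'' direction, suppose $T^n(a,b)=(a,b)$ with $n\ge1$. Iterating the defining relation $\La_{T(x)}=h_{R(x)}\La_x$ and using that the $h_s$ form a one-parameter group gives $\La_{a,b}=\La_{T^n(a,b)}=h_{S}\La_{a,b}$ with $S=\sum_{j=0}^{n-1}R(T^j(a,b))>0$ (note $R\ge1$ on $\Om$). Hence $p_{a,b}^{-1}h_S p_{a,b}\in SL(2,\Z)$, and a one-line matrix multiplication yields
$$p_{a,b}^{-1}h_S p_{a,b}=\begin{pmatrix}1+Sab & Sb^2\\ -Sa^2 & 1-Sab\end{pmatrix},$$
which automatically has determinant $1$; integrality of its entries forces in particular $Sa^2\in\Z$ and $Sab\in\Z$, so, since $a,b>0$, we get $a/b=Sa^2/(Sab)\in\Q$.

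For the ``if'' direction I would avoid re-entering the flow picture — which carries a harmless boundary subtlety at $(1,1)$, whose lattice $\Z^2$ has a unit-length vertical vector — and instead run a short pigeonhole argument using the tile description of \S\ref{subsubsec:tiles}. Write $a/b=p/q$ in lowest terms and set $t:=a/p>0$, so $a=tp$, $b=tq$ and $(a,b)\in t\Z^2$. On each $\Om_k$ the map $T$ acts by $(a,b)\mapsto(a,b)A_k^{T}$ with $A_k\in SL(2,\Z)$, so $T$ carries $t\Z^2$ into itself; combined with $T(\Om)\subseteq\Om$ this shows $T$ maps the set $F:=t\Z^2\cap\Om$ into itself. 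Now $F$ is finite because $\Om\subseteq(0,1]^2$ is bounded, and $T$ is injective on $\Om$ (either because a first-return map is a bijection, or directly from (\ref{eq:bcz}): $T(a_1,b_1)=T(a_2,b_2)$ forces $b_1=b_2=:c$ and $a_1-a_2\in c\Z$, while $a_i>1-c$ confines $a_1,a_2$ to an interval of length $c$, so $a_1=a_2$). An injective self-map of a finite set is a bijection of it, so every point of $F$ — in particular $(a,b)$ — is $T$-periodic; and since $\{(a,b)\in\Om:a,b\in\Q\}$ is dense in $\Om$ and each such point has rational slope, the periodic set is dense.

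The proof is short and there is no real analytic obstacle; the one thing that needs care is arguing each direction on the ``correct side'' of the correspondence — using the suspension relation of Theorem~\ref{theorem:main} for ``periodic $\Rightarrow$ rational slope'' (so that one genuinely produces a closed horocycle of positive period), and the $SL(2,\Z)$-structure of the tile maps together with boundedness of $\Om$ for ``rational slope $\Rightarrow$ periodic'' (so as to sidestep the exceptional set of Theorem~\ref{theorem:main} altogether). A secondary point worth checking explicitly is the injectivity of $T$ on $\Om$, which is what makes the finite-set pigeonhole argument run.
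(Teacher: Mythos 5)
Your proof is correct. The ``only if'' direction is essentially the paper's argument: from $T$-periodicity you pass to a closed horocycle and examine $p_{a,b}^{-1}h_S p_{a,b}\in SL(2,\Z)$, reading off rationality of $a/b$ from integrality of the entries. The paper does the same, except that it first uses the $g_t$-conjugation (\ref{eq:conj}) to scale $(a,b)$ to a point of the form $(1,b')$ and then computes $p_{1,b'}^{-1}h_{s_0}p_{1,b'}$; your version skips the normalization and works directly at $(a,b)$, which is a little cleaner and logically equivalent. The ``if'' direction is where you genuinely diverge: the paper reuses the very same matrix computation to exhibit, for $b'=k/l$ in lowest terms, the minimal period $s_0=l^2$ and the associated parabolic matrix, thereby producing an explicit closed horocycle. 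You instead give an elementary pigeonhole argument entirely in the Euclidean tile picture: writing $a/b=p/q$ in lowest terms, the orbit lies in the finite set $t\Z^2\cap\Om$ with $t=a/p$, which is $T$-invariant because each $A_k\in SL(2,\Z)$, and injectivity of $T$ on $\Om$ (which you correctly verify from (\ref{eq:bcz})) turns this into a bijection of a finite set, forcing periodicity. Your route is shorter and avoids re-entering the flow picture; the paper's route has the advantage of simultaneously producing the explicit period and the parabolic holonomy matrix $\left(\begin{smallmatrix}1+kl & k^2\\ -l^2 & 1-kl\end{smallmatrix}\right)$, which it needs a few lines later for Theorem~\ref{theorem:structure:periodic}. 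For Theorem~\ref{theorem:dense:periodic} alone, your argument is a nice simplification.
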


\noindent While periodic points are abundant, there are strong restrictions on the \emph{lengths} of periodic orbits, and the associated matrices, governed by the relationship between the (discrete) period $P(a,b)$ under $T$ of a point $(a,b)$ and the (continuous) period $s(a,b)$ of the associated lattice $p_{a,b} \Z^2$ under $h_s$. We fix notation: for $n \geq 1$, we write $$A_n(a,b) = A\left(T^{n-1}(a,b)\right) A\left(T^{n-2}(a,b)\right) \ldots A(a,b),$$ so $$T^n(a,b) = (a,b)A_n(a,b)^T.$$

As a starting point for our observations, note that the diagonal $(a,a) \in \Om$ consists of \emph{fixed} (i.e, period $P(a,a) = 1$) points for $T$, the associated horocycle period is the value roof function $R(a,a) = \frac{1}{x^2}$, and the associated matrix $$A_{P(a,a)} (a,a) =  A(a,a) = A_2 =  \left(\begin{array}{cc}0 & 1 \\-1 & 2\end{array}\right)$$ is parabolic. Our main result on periodic points is that appropriate versions of these observations hold for all (segments of) periodic points.

\begin{Theorem}\label{theorem:structure:periodic} For any periodic point $(a, b) \in \Om$, we have $$P(a,b) = N\left(\left\lfloor \sqrt{s(a,b)} \right \rfloor\right).$$ Thus the set of possible periods is given by the cardinalities $\{N(Q): Q \in \N\}$ of Farey sequences. Moreover, the matrix $$A_{P(a,b)}(a,b) = A\left(T^{P(a,b)-1}(a,b)\right)A\left(T^{P(a,b)-2}(a,b)\right) \ldots A(a,b)$$ associated to the periodic orbit is always parabolic , and in fact constant along the segment $\left\{(ta,tb): t \in \left(\frac{1}{a+b}, 1\right]\right\}$.  Precisely, for $k, l \in \N$, $k \le l$ relatively prime, and $a \in \left (\frac{l}{l+r}, \frac{l}{l+r-1}\right]$, $1 \le r \le k$, we have $$P\left(a, a\frac{k}{l}\right) = N(l + r -1),$$ and for $a \in  (\frac{l}{l+k}, 1]$, $$A_{P\left(a, a\frac{k}{l}\right)}\left(a, a\frac{k}{l}\right) = \left(\begin{array}{cc}1-kl & l^2 \\-k^2 & 1+kl\end{array}\right).$$
\end{Theorem}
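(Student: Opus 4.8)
The plan is to derive the statement from three computations: the value of the continuous period $s(a,b)$; the number of intersections of the closed horocycle through $\La_{a,b}:=p_{a,b}\Z^2$ with $\Om$; and the element of $SL(2,\Z)$ produced by going once around that horocycle. By Theorem~\ref{theorem:dense:periodic} a $T$-periodic point has rational slope, so fix $(a,b)\in\Om$ with $b/a=k/l$, $k,l\ge1$, $\gcd(k,l)=1$. Since $\Om$ is a Poincar\'e section for $N$ (Theorem~\ref{theorem:main}) and the horocycle orbit of $\La_{a,b}$ is periodic and meets $\Om$ — it avoids the excluded set, as its shortest vertical vector is fixed by the flow and has length $l/a\ge1$ — the discrete period $P(a,b)$ equals the number of $s\in[0,s(a,b))$ with $h_s\La_{a,b}\in\Om$, and $s(a,b)=\sum_{i=0}^{P(a,b)-1}R(T^i(a,b))$.

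First I would compute $s(a,b)$: $h_s$ fixes $\La_{a,b}$ iff $p_{a,b}^{-1}h_sp_{a,b}\in SL(2,\Z)$, and
$$p_{a,b}^{-1}h_sp_{a,b}=\begin{pmatrix}1+abs & b^2 s\\ -a^2 s & 1-abs\end{pmatrix},$$
so (inserting $b=ak/l$ and using $\gcd(k,l)=1$) this holds iff $a^2 s\in l^2\Z$, giving $s(a,b)=l^2/a^2$ and $\lfloor\sqrt{s(a,b)}\rfloor=\lfloor l/a\rfloor$.

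Next, the crossings. In the basis $\{(a,-sa),\,(ak/l,\,a^{-1}-sak/l)\}$ of $h_s\La_{a,b}$ (the $h_s$-images of the two columns of $p_{a,b}$), the combination $m(a,-sa)+n(ak/l,a^{-1}-sak/l)$ is horizontal exactly when $n/m=sa^2l/(l-sa^2k)$; writing this ratio in lowest terms as $n_0/m_0$, with signs chosen so that $\mu:=m_0l+n_0k>0$ (and $(m_0,n_0)=(0,1)$ in the degenerate case $sa^2k=l$), the shortest horizontal vector of $h_s\La_{a,b}$ is $(a\mu/l,0)$, occurring at the parameter with $sa^2=n_0l/\mu$. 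Since $\Om$ consists of the lattices having a horizontal vector of length $\le1$ (see \S\ref{subsec:transversal}), the crossings in one period are in bijection with the primitive pairs $(m_0,n_0)$ satisfying $1\le\mu\le l/a$ and $0\le n_0<l\mu$. To count these, fix $v=\mu\in\{1,\dots,\lfloor l/a\rfloor\}$: the solutions of $lm_0+kn_0=v$ form a coset of $\Z\cdot(k,-l)$, exactly $v$ of which have $n_0\in[0,lv)$; splitting off $g=\gcd(m_0,n_0)$ — necessarily a divisor of $v$ — and dividing through by $g$ identifies the solutions with $\gcd=g$ with the \emph{primitive} solutions of $lm_0'+kn_0'=v/g$ in $n_0'\in[0,l(v/g))$, so writing $f(v)$ for the number of primitive solutions we obtain $\sum_{d\mid v}f(d)=v$, whence $f(v)=\phi(v)$ by M\"obius inversion. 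Therefore
$$P(a,b)=\sum_{v=1}^{\lfloor l/a\rfloor}\phi(v)=N\bigl(\lfloor l/a\rfloor\bigr)=N\bigl(\lfloor\sqrt{s(a,b)}\rfloor\bigr),$$
and since $\La_{1/Q,1}$ (slope $Q$, so $l=1$, $a=1/Q$) has period $N(Q)$, the set of periods is exactly $\{N(Q):Q\in\N\}$.

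Finally, iterating the relation $p_{T(a,b)}=h_{R(a,b)}p_{a,b}\gamma(a,b)$ from the proof of Theorem~\ref{theorem:main} — where a short computation gives $\gamma(a,b)=A(a,b)^T$, consistent with $T(a,b)=(a,b)A(a,b)^T$ — around the periodic orbit yields
$$A_{P(a,b)}(a,b)^T=\gamma(a,b)\,\gamma(T(a,b))\cdots\gamma\bigl(T^{P(a,b)-1}(a,b)\bigr)=p_{a,b}^{-1}h_{-s(a,b)}p_{a,b},$$
so that $A_{P(a,b)}(a,b)$ is the transpose of $\bigl(p_{a,b}^{-1}h_{s(a,b)}p_{a,b}\bigr)^{-1}$; inserting $s(a,b)=l^2/a^2$ and $b=ak/l$ this equals $\begin{pmatrix}1-kl & l^2\\ -k^2 & 1+kl\end{pmatrix}$, which has trace $2$ and is $\neq I$, hence parabolic, and depends only on $(k,l)$. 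Because the products $a^2 s(a,b)$, $ab\,s(a,b)$, $b^2 s(a,b)$ are unchanged under $(a,b)\mapsto(ta,tb)$, this matrix is constant along the radial segment. For the explicit statements: when $k\le l$ and $a\in(\tfrac{l}{l+r},\tfrac{l}{l+r-1}]$ we have $l/a\in[l+r-1,l+r)$, so the period formula gives $P(a,a\tfrac kl)=N(l+r-1)$, and for $a\in(\tfrac{l}{l+k},1]$ the matrix above is the asserted one. I expect the crossing count (the third paragraph) to be the crux: pinning down the exact $\Om$-membership condition along the orbit, the sign normalization of $(m_0,n_0)$, the degenerate crossing $sa^2k=l$, and the identity $f(v)=\phi(v)$; once that is in place, the period and monodromy formulas drop out.
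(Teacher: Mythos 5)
Your proof is correct, but the period count takes a genuinely different route from the paper's. The paper (\S\ref{sec:period}) verifies in Lemma~\ref{lemma:farey:period:index} that $\kappa(T^i(t,t/Q))=\kappa(T^i(1,1/Q))$ for $t\in(\tfrac{Q}{Q+1},1]$, so $T$ acts linearly along the scaling segment and it inherits the period $N(Q)$ of the Farey point $(1/Q,1)$ from (\ref{eq:farey:bcz}); the general case is then reached by inductively ``sliding'' the segment past the boundary $a+b=1$. You instead compute $s(a,b)=l^2/a^2$ from the conjugation $p_{a,b}^{-1}h_sp_{a,b}$ and count the crossings of the closed horocycle with $\Omega$ by a direct Diophantine argument: crossings correspond to primitive integer solutions of $lm+kn=\mu$ with $1\le\mu\le\lfloor l/a\rfloor$ in a fundamental domain for the translation by $(k,-l)$, and M\"obius inversion of $\sum_{d\mid\mu}f(d)=\mu$ gives $f=\phi$, hence $P(a,b)=\sum_{\mu\le\lfloor l/a\rfloor}\phi(\mu)=N(\lfloor\sqrt{s(a,b)}\rfloor)$, without ever invoking (\ref{eq:farey:bcz}) or the structure of Farey denominators. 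Your route is more self-contained; the paper's has the virtue of exhibiting the linearity of $T$ along the scaling segments, which it then reuses in \S\ref{subsec:segments}--\S\ref{subsec:hierarchies:periodic}. The monodromy step is essentially the paper's argument (iterate the cocycle identity once around the periodic orbit and identify the product with $p_{a,b}^{-1}h_{\pm s}p_{a,b}$); your direct derivation of $\gamma(a,b)=A(a,b)^T$ is correct and in fact quietly corrects a transpose typo in (\ref{eq:cocycle}), which as printed has $(A(a,b)^{-1})^T$ but whose displayed matrix is $A(a,b)^T$. One minor imprecision worth noting: you justify that the orbit avoids the excluded set of Theorem~\ref{theorem:main} because the vertical vector has length $l/a\ge1$, but the excluded set is lattices with vertical vector of length $\le1$, so ``$\ge1$'' alone does not settle the boundary case $l/a=1$ (which occurs only at $(a,b)=(1,1)$); the cleaner observation is simply that the orbit meets $\Omega$ at $s=0$ because $\La_{a,b}\in\Omega$ by hypothesis, after which the Poincar\'e section structure applies.
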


\subsection{Piecewise linear description of horocycles}\label{subsubsec:plh}In this section, we describe the results that originally motivated this project, on cusp excursions and returns to compact sets for the horocycle flow. Let $\| .\|$ denote the supremum norm on $\R^2$. For $\La = g\Z^2 \in X_2$, define $$\ell(\La) : = \inf_{0 \neq \vv \in g\Z^2} \|\vv\|,$$ and let $\alpha: X_2 \rightarrow \R^+$ be given by $$\alpha(\La) = \frac{1}{\ell(\La)}.$$ By Mahler's compactness criterion, a subset $A$ of $X_2$ is precompact if and only if there is an $\epsilon >0$ so that for all $\La \in A$, $$\ell(\La) > \epsilon,$$ or, equivalently if $\alpha|_A$ is bounded.

Dani~\cite{Dani} showed that for any lattice $\La \in X_2$, the orbit $\{h_s \La\}_{s \geq 0}$ is either closed or uniformly distributed with respect to the Haar probability measure $\mu$ on $X_2$. Thus for an $\La$ so that $\{h_s \La\}_{s \geq 0}$ is not closed, we have $$\limsup_{s \rightarrow \infty} \alpha_1(h_s \La) = \infty$$ and $$\limsup_{s \rightarrow \infty} \ell(h_s \La) = 0.$$ In fact, this does not require equidistribution but simply density (due to Hedlund~\cite{Hedlund}). $\{h_s \La\}$ is closed if and only if $\La$ has vertical vectors.

A natural question is to understand the \emph{rate} at which these excursions to the non-compact part (`cusp') of $X_2$ occur. The first-named author, in joint work with G.~Margulis~\cite{AM1}, showed that for any $\La \in X_2$ without vertical vectors, $$\limsup_{s \rightarrow \infty} \frac{\log \alpha_1(h_s \La)}{\log s} \geq 1,$$ and related the precise limit to Diophantine properties of the lattice $\La$ (see also~\cite{A}).

Our results concern the average behavior of \emph{all} visits to the cusp, as defined by local minima of the function $\ell_{\La}(s) = \ell(h_s \La)$ (or, equivalently, local maxima of $\alpha_{\La}(s) = \alpha(h_s\La)$). The function $\ell_{\La}(s)$  is a piecewise-linear function of $s$, and helps give a picture of the`height' of the horocycle at time $s$. In this way it is similar to work of the second author~\cite{Cheung}, where a piecewise linear description of diagonal flows on $SL(3, \R)/SL(3, \Z)$ was studied.

We also consider returns to the compact part of $X_2$, given by local minimal of $\ell_{\La}(s)$. Given $\La \in X_2$ so that $\{h_s \La\}_{s \geq 0}$ is not closed, let $\{s_n\}$ and $\{S_n\}$ denote the sequences of minima and maxima of $\ell_{\La}(s)$ respectively.  To be completely precise, the local minima for the supremum norm occur in intervals, and we take $s_n$ to be the \emph{midpoint} of the $n^{th}$ interval. Define the averages $$a_N(\La): = \frac{1}{N}\sum_{n=1}^N \alpha_{\La}(s_n)\mbox{ ; }A_N(\La) := \frac{1}{N}\sum_{n=1}^N \alpha_{\La}(S_n),$$ $$l_N(\La) :=  \frac{1}{N}\sum_{n=1}^N \ell_{\La}(s_n)\mbox{ ; } L_N(\La) := \frac{1}{N}\sum_{n=1}^N \ell_{\La}(S_n).$$

\begin{Theorem}\label{theorem:minima} For any $\La$ without vertical vectors, we have
\begin{equation}\label{equation:minima:alpha} \lim_{N \rightarrow \infty} a_N(\La) = 2 \end{equation}
\begin{equation}\label{equation:minima:ell} \lim_{N \rightarrow \infty} l_N(\La) = \frac{2}{3} \end{equation}
\end{Theorem}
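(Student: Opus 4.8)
The plan is to reduce the averages $a_N(\La)$ and $l_N(\La)$ to Birkhoff averages for the BCZ map $T$ on the section $\Om$, and then invoke the ergodicity of $T$ (Theorem~\ref{theorem:BCZ:ergodic}). The first step is to understand the local minima of $\ell_\La(s)$ in terms of the return times to $\Om$. By Theorem~\ref{theorem:main}, since $\La$ has no vertical vector, the orbit $\{h_s\La\}$ meets $\Om$ in a bi-infinite discrete set of times; let the successive intersection points be $(a_n,b_n) = T^n(a_0,b_0)$. The key geometric observation (to be established in \S\ref{sec:plh}) is that the local minimum of $\ell_\La(s)$ between consecutive section hits occurs essentially at the section point itself: when $h_s\La \in \Om$, the lattice has a horizontal vector of length $a_n \le 1$, and $\ell(h_s\La) = a_n$ is precisely the length of the shortest vector at that moment, i.e. the $n$-th local minimum value is $\ell_\La(s_n) = a_n$, so $\alpha_\La(s_n) = 1/a_n$. (One must check that the midpoint convention for the flat part of the sup-norm ball matches the section point, and that no spurious local minima occur strictly between section hits; this is where a careful picture of $\ell_\La(s)$ as a piecewise-linear function is needed.)

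Granting this, $l_N(\La) = \frac1N\sum_{n=1}^N a_n = \frac1N\sum_{n=1}^N f(T^n(a_0,b_0))$ where $f(a,b) = a$, and $a_N(\La) = \frac1N\sum_{n=1}^N g(T^n(a_0,b_0))$ where $g(a,b) = 1/a$. Now I would apply the Birkhoff ergodic theorem: since $T$ preserves $dm = 2\,da\,db$ and is ergodic (Theorem~\ref{theorem:BCZ:ergodic}), for $m$-a.e.\ starting point the averages converge to $\int_\Om f\,dm$ and $\int_\Om g\,dm$ respectively. A direct computation over the Farey triangle gives
\begin{equation*}
\int_\Om 2a\,da\,db = 2\int_0^1\!\!\int_{1-a}^1 a\,db\,da = 2\int_0^1 a^2\,da = \frac23,
\end{equation*}
and
\begin{equation*}
\int_\Om \frac{2}{a}\,da\,db = 2\int_0^1\!\!\int_{1-a}^1 \frac1a\,db\,da = 2\int_0^1 1\,da = 2,
\end{equation*}
which are exactly the claimed constants $\frac23$ and $2$. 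Note $g = 1/a$ is integrable on $\Om$ since the region $a+b>1$ forces $a$ away from $0$ with a linear weight, so Birkhoff applies without integrability worries.

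The remaining issue is to upgrade "$m$-a.e.\ $\La$" to "\emph{every} $\La$ without vertical vectors." This is the main obstacle. The standard device is to exploit the extra structure of the horocycle flow: the stable/unstable horospherical coordinates and the fact that the return-time function $R = 1/(ab)$, while not bounded, lies in $L^p(m)$ for $p<2$. One route is to pass from the discrete averages back to the continuous picture using $\sum_{n=1}^N R(T^n(a_0,b_0)) \asymp s_N$ (the total flow time), combine with Dani's theorem that non-closed horocycle orbits equidistribute, and deduce convergence of the time-averages of suitable functions on $X_2$ along the whole orbit; then every non-closed orbit — hence every $\La$ without vertical vectors — works because equidistribution of individual horocycle orbits (Dani--Hedlund, and its effective forms) holds for all such $\La$, not merely a.e.\ one. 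Alternatively, one can use the $P$-action equivariance of the section $\Om$ and the minimality-type properties of $N$-orbits to transfer the a.e.\ statement to all points by a density/continuity argument in the transverse direction. I expect the cleanest writeup to invoke equidistribution of \emph{all} non-closed horocycle orbits with respect to $\mu$ together with the identification of $\Om$ as a cross-section with integrable roof $R$, converting the Cesàro averages over section hits into normalized integrals $\frac{1}{s}\int_0^s F(h_u\La)\,du$ for an appropriate $F$ built from $\alpha$, and reading off the limits.
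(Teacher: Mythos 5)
Your proposal is correct and follows essentially the same route as the paper: identify the local minima of $\ell_\La(s)$ with the return times to $\Om$ where $\ell(\La_{a,b})=a$, reduce $a_N$ and $l_N$ to Birkhoff averages of $1/a$ and $a$ for the BCZ map, compute $\int_\Om 2a\,da\,db=\tfrac23$ and $\int_\Om \tfrac{2}{a}\,da\,db=2$, and upgrade from $m$-a.e.\ to every non--vertically-short $\La$ via Dani's classification (every non-periodic $h_s$-orbit is generic for $\mu_2$, hence every non-periodic $T$-orbit is generic for $m$). The paper's write-up resolves your hedging on the ``every $\La$'' step precisely by the first of your suggested alternatives, so no genuine gap remains.
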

\medskip
\noindent Let $M: \Omega \rightarrow [0, 1]$ be given by \begin{equation}\label{eq:max}M(a,b) = \max\left\{a, b, \frac{1}{a+b}\right\}\end{equation}

\begin{Theorem}\label{theorem:maxima} For any $\La$ without vertical vectors, we have

\begin{equation}\label{equation:maxima:alpha} \lim_{N \rightarrow \infty} A_N(\La) = \int_{\Om} \frac{1}{M} dm  = \frac{2}{3}\left(13-8\sqrt 2\right) \approx 1.2\end{equation}
\begin{equation}\label{equation:maxima:ell} \lim_{N \rightarrow \infty} L_N(\La) = \int_{\Om} Mdm = \frac{2}{3}\left(7-4\sqrt 2\right) \approx .73\end{equation}
\end{Theorem}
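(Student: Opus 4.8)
The plan is to reduce both limits to an ergodic average for the BCZ map $T$ acting on $(\Om, m)$, exactly as in the proof of Theorem~\ref{theorem:minima}, and then to identify the value of the integrand at a maximum of $\ell_\La$ with the function $M$ of~(\ref{eq:max}). The first step is to understand where the local maxima $S_n$ of $\ell_\La(s)$ occur relative to the Poincar\'e section $\Om$. Between two consecutive returns to $\Om$, the lattice $h_s\La$ has a horizontal vector whose length decreases and then the next shortest vector takes over; on the suspension picture of Figure~\ref{fig:bczroof}, with base point $(a,b)\in\Om$ and roof $R(a,b)=\tfrac1{ab}$, the height function $\ell$ along the vertical fibre is piecewise linear in $s$ and its maximum over $[0, R(a,b)]$ is attained at the unique interior break, where the horizontal vector of length (going from $a$ up to $a^{-1}\ge 1$, capped at length $1$... more precisely among the three candidate short vectors coming from $(a,b)$, $(b,\cdot)$ and the reduction) equals the competing vector. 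A direct computation in $SL(2,\R)$ — writing $h_s p_{a,b}$ explicitly and listing the three relevant primitive lattice vectors, namely $(a,-sa)$, and the two vectors with first coordinate $b$ and $a+b$ coming from the neighbouring section points — shows that $\max_s \ell(h_s p_{a,b}\Z^2)$ over one excursion equals $\max\{a,b,\tfrac1{a+b}\} = M(a,b)$, and symmetrically the maximum of $\alpha_\La = 1/\ell_\La$ over the excursion equals $1/M(a,b)$. I would isolate this as a short lemma, since it is the geometric heart of the argument.

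Granting that lemma, the second step is bookkeeping: for $\La$ without vertical vectors, Theorem~\ref{theorem:main} says the return times $\{s_n\}_{n\in\Z}$ to $\Om$ are a bi-infinite discrete set, and the sequence of section points $(a_n,b_n):=h_{s_n}\La\in\Om$ satisfies $(a_{n+1},b_{n+1})=T(a_n,b_n)$. The $n$-th local maximum $S_n$ of $\ell_\La$ lies in the $n$-th excursion, so by the lemma $\alpha_\La(S_n) = 1/M(a_n,b_n)$ and $\ell_\La(S_n) = M(a_n,b_n)$. Therefore
\begin{equation}\label{eq:maxAvg}
A_N(\La) = \frac1N\sum_{n=1}^N \frac{1}{M}\bigl(T^{n}(a_0,b_0)\bigr),\qquad
L_N(\La) = \frac1N\sum_{n=1}^N M\bigl(T^{n}(a_0,b_0)\bigr).
\end{equation}
Now I invoke Theorem~\ref{theorem:BCZ:ergodic}: $T$ is ergodic with respect to the probability measure $m=2\,da\,db$. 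Since $M$ is bounded ($0<M\le 1$) it lies in $L^1(m)$, so by Birkhoff's ergodic theorem the second average in~(\ref{eq:maxAvg}) converges, for $m$-a.e.\ starting point, to $\int_\Om M\,dm$. For $1/M$ one checks integrability: $M(a,b)\ge \tfrac1{a+b}\ge \tfrac12$ on $\Om$, so $1/M\le 2$ is bounded as well, and Birkhoff gives convergence of $A_N(\La)$ to $\int_\Om \tfrac1M\,dm$ for a.e.\ starting point. To upgrade ``a.e.\ $\La$'' to ``every $\La$ without vertical vectors'' I would use the uniform convergence of Birkhoff averages available here because the horocycle flow on $X_2$ is uniquely ergodic off the closed orbits (Dani--Smillie), transported through the suspension: the set of exceptional points for $T$ is $T$-invariant of zero measure, hence by Theorem~\ref{theorem:BCZ:ergodic} is contained in the union of periodic orbits, and on a periodic orbit~(\ref{eq:maxAvg}) is a genuine periodic average whose value one computes directly and checks to be the same integral in the limit along $Q\to\infty$ — alternatively, and more cleanly, appeal to equidistribution of all non-closed horocycle orbits and the fact that the first-return map inherits unique ergodicity on the complement of periodic points.

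Finally, the third step is the two explicit integrals. Using the decomposition $\Om=\{a\le b\}\cup\{b\le a\}$ and, within each piece, splitting according to which of $a$, $b$, $\tfrac1{a+b}$ is largest, each integral becomes a finite sum of integrals of $a$, $b$, or $\tfrac1{a+b}$ (resp.\ their reciprocals) over polygonal/hyperbolic subregions of the Farey triangle. By the $a\leftrightarrow b$ symmetry of $\Om$ and of $M$, it suffices to integrate over $\{b\le a\}$ and double. The region where $\tfrac1{a+b}$ dominates is $\{a+b\le 1/a\}$ intersected with $\Om$, whose boundary is the hyperbola $a(a+b)=1$; this is where the $\sqrt2$ enters, since $a(a+a)=1$ gives $a=1/\sqrt2$. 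Carrying out these elementary (if slightly tedious) integrations yields $\int_\Om M\,dm = \tfrac23(7-4\sqrt2)$ and $\int_\Om \tfrac1M\,dm = \tfrac23(13-8\sqrt2)$, as claimed. The main obstacle is the first step — correctly identifying, via the explicit lattice-vector computation, that the per-excursion extremum of $\ell$ is exactly $M(a,b)$ and that the maximum of $\ell_\La$ falls in the same excursion it is indexed by; once that is pinned down, the rest is ergodic theory already established plus a bounded-region integral.
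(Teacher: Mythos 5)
Your proposal follows the paper's proof in \S\ref{subsec:return:proofs} in all essentials: identify the per-excursion maximum of $\ell_\La$ as the ``hand-off'' value $M(a,b)=\max\{a,b,\tfrac{1}{a+b}\}$, reduce $A_N$ and $L_N$ to Birkhoff sums of $1/M$ and $M$ along the BCZ orbit of the first section point, and pass from a.e.\ to every non-vertically-short $\La$ via Dani's measure classification, exactly as the paper does for Theorem~\ref{theorem:minima}. One small correction to your geometric narrative: the term $\tfrac{1}{a+b}$ in $M$ does not come from a lattice vector with first coordinate $a+b$ (any such vector has sup-norm $>1$ on $\Om$ and is never shortest); it is the crossover value of the two piecewise-linear graphs $s\mapsto\|h_s(a,0)^T\|$ and $s\mapsto\|h_s(b,a^{-1})^T\|$ in the case where the crossover lands on the sloped parts of both, the other cases of the analysis producing $a$ or $b$ and hence $M=\max\{a,b,\tfrac{1}{a+b}\}$ overall.
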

\medskip
\noindent Theorem~\ref{theorem:minima} and Theorem~\ref{theorem:maxima} follow from applying the ergodic theorem to the BCZ transformation $T: \Omega \rightarrow \Omega$. The assumption that $\La$ does not have vertical vectors is to ensure it does not have a periodic orbit under $T$ (equivalently, $h_s$). There is a version of this result for periodic orbits (Corollary~\ref{cor:farey:amusing}) given in \S\ref{subsubsec:excursions} 
The function $M$ gives the maximum of the function $\ell$ on a sojourn from the transversal $\Omega$. The limits (\ref{equation:minima:alpha}) and (\ref{equation:minima:ell}) in Theorem~\ref{theorem:minima} are the integrals of the functions $g(a,b) = \La$ and $h(a,b) = \frac{1}{\La}$ over $\Omega$ respectively. The times $s_n$ correspond to the return times of $\{h_s\La\}$ to $\Om$.

\subsection{Farey Statistics}\label{subsubsec:fareystat}Theorem~\ref{theorem:equidist:periodic} has several number theoretic corollaries. We record results on spacing and indices of Farey fractions, originally proved using analytic methods in~\cite{ABCZ, BCZ, Hall}.  Our results give a unified explanation for these equidistribution phenomena.  For similar applications in the context of higher dimensional generalizations of Farey sequences we refer the reader to \cite{MS} and \cite{Marklof2}.  

\subsubsection{Spacings and $h$-spacings}\label{subsubsec:Hall} We now fix an interval $I \subset [0, 1]$. It is well known that the Farey sequences $\F_I(Q)$ become equidistributed in $[0, 1)$ as $Q \rightarrow \infty$. A natural statistical question is to understand the distribution of the \emph{spacings} $(\gamma_{i+1} - \gamma_{i})$. Since there are $N_I(Q)$ points, and $N_I(Q) \sim |I| \frac{3}{\pi^2} Q^2$, the natural normalization yields the following question: given $0 \le c \le d$, what is the limiting ($Q \rightarrow \infty$) behavior of $$\frac{ |\{\gamma_{i} \in \F_ {I}(Q): \frac{3}{\pi^2} |I| Q^2(\gamma_{i+1} - \gamma_{i}) \in [c, d]\}|}{N_I(Q)} ?$$

\noindent Since $Q^2(\gamma_{i+1} - \gamma_i)  = \left(R\circ T^{i-1}\right)\left(\frac{1}{Q}, 1\right)$, the above quantity reduces to \begin{equation}\label{eq:spacing}\rho_{Q, I}\left ( R^{-1}\left(\left[\frac{\pi^2}{3|I|}c, \frac{\pi^2}{3|I|}d\right]\right)\right).\end{equation} Since $R^{-1}\left(\left[\frac{\pi^2}{3|I|}c, \frac{\pi^2}{3II|}d\right]\right)$ is \emph{compact}, we can directly apply Theorem~\ref{theorem:equidist:periodic} to (\ref{eq:spacing}) obtain a result of R.~R.~Hall~\cite{Hall}:

\begin{Cor}\label{cor:hall} As $Q \rightarrow \infty$, $$ \frac{ |\{\gamma_i \in \F_I(Q): \frac{3}{\pi^2}|I|Q^2(\gamma_{i+1} - \gamma_{i}) \in (c, d)\}|}{N_I(Q)} \rightarrow m\left (R^{-1}\left(\frac{\pi^2}{3|I|}c, \frac{\pi^2}{3|I|}d\right)\right). $$
\end{Cor}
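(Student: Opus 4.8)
The plan is to turn the left-hand side directly into the value of the empirical measure $\rho_{Q,I}$ on a fixed subset of the Farey triangle $\Om$, and then quote Theorem~\ref{theorem:equidist:periodic}. \textbf{Step 1 (the BCZ substitution).} By the identity $Q^2(\gamma_{i+1}-\gamma_i)=(R\circ T^{i-1})(\tfrac1Q,1)$ recorded above — which combines $\gamma_{i+1}-\gamma_i=\tfrac{1}{q_iq_{i+1}}$, the formula $R(a,b)=\tfrac1{ab}$ from Theorem~\ref{theorem:main}, and the iteration $T^{i-1}(\tfrac1Q,1)=(\tfrac{q_i}{Q},\tfrac{q_{i+1}}{Q})$ from \eqref{eq:farey:bcz} — the condition $\tfrac{3}{\pi^2}|I|Q^2(\gamma_{i+1}-\gamma_i)\in(c,d)$ is, with $e:=\tfrac{\pi^2}{3|I|}c$ and $f:=\tfrac{\pi^2}{3|I|}d$, exactly $T^{i-1}(\tfrac1Q,1)\in R^{-1}((e,f))$.

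\textbf{Step 2 (reindex and absorb the shift).} Reindexing $j=i-1$, the numerator becomes $|\{\,j:\gamma_{j+1}\in I,\ T^j(\tfrac1Q,1)\in R^{-1}((e,f))\,\}|$. Since $\gamma_1<\gamma_2<\cdots$ is increasing, both $\{j:\gamma_j\in I\}$ and $\{j:\gamma_{j+1}\in I\}$ are intervals of integers, and they differ in at most two elements; hence replacing $\gamma_{j+1}\in I$ by $\gamma_j\in I$ alters the count by $O(1)$. Dividing by $N_I(Q)\sim\tfrac{3}{\pi^2}|I|Q^2\to\infty$, the left-hand side equals $\rho_{Q,I}\!\left(R^{-1}((e,f))\right)+o(1)$.

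\textbf{Step 3 (portmanteau).} By Theorem~\ref{theorem:equidist:periodic}, $\rho_{Q,I}\to m$ in the weak-$*$ topology. Because $R(a,b)=\tfrac1{ab}$, we have $R^{-1}((e,f))=\{(a,b)\in\Om:\tfrac1f<ab<\tfrac1e\}$, an open set whose topological boundary is contained in the two level sets $\{ab=\tfrac1e\}$, $\{ab=\tfrac1f\}$ — arcs of hyperbolas — together with a piece of $\partial\Om$; all of these are $m$-null, so $R^{-1}((e,f))$ is an $m$-continuity set (this is precisely the force of the compactness remark preceding the statement). The portmanteau theorem then yields $\rho_{Q,I}(R^{-1}((e,f)))\to m(R^{-1}((e,f)))$, and combining with Step~2 gives the corollary.

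The only genuinely non-formal point is Step~3: weak-$*$ convergence of the $\rho_{Q,I}$ does not by itself control their mass on an arbitrary Borel set, so one must verify that $R^{-1}((e,f))$ has $m$-negligible boundary. This is where the explicit formula $R=\tfrac1{ab}$ (making level sets hyperbola arcs, hence planar-null) is essential, and it is the same observation that underlies the compactness comment in the text. The index shift between the $T^{i-1}$ in the spacing identity and the $T^{i}$ in the definition of $\rho_{Q,I}$ is harmless but worth flagging; the rest is bookkeeping.
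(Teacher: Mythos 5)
Your proof is correct and follows the same route as the paper: rewrite the spacing count as $\rho_{Q,I}$ applied to a fixed subset of $\Om$ and then invoke Theorem~\ref{theorem:equidist:periodic}. You are slightly more careful than the paper in two places --- you explicitly flag the harmless index shift between the $T^{i-1}$ in the spacing identity and the $T^i$ in the definition of $\rho_{Q,I}$, and you justify convergence on $R^{-1}((e,f))$ by checking that its boundary (arcs of hyperbolas together with a piece of $\partial\Om$) is $m$-null so that portmanteau applies, rather than appealing only to compactness of the preimage --- but the underlying argument is the same.
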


\noindent We call this distribution \emph{Hall's distribution}. A generalization of this result to higher-order spacings was considered by Augustin-Boca-Cobeli-Zaharescu~\cite{ABCZ}. They considered \emph{$h$-spacings}: the vector of $h$-tuples ($h \geq 1$) of spacings $\vv_{i, h} = \vv_{i,h}(Q)  = \left(\gamma_{i+j} - \gamma_{i+j-1}\right)_{j=1}^{h} \in \R^h$. Given $\B = \prod_{i=1}^{k} \left[c_i, d_i\right]$, we have, as above, \begin{equation}\label{eq:hspacing}\frac{ \left|\left\{\gamma_i \in \F_I(Q): \frac{3}{\pi^2}|I|Q^2\vv_{i,h} \in \B\right\}\right|}{N_I(Q)} = \rho_{Q, I}\left ( R_h^{-1}\left(\tilde{B}\right)\right),\end{equation} where $R_h: \Om \rightarrow \R^h$ is given by $$R^h(a, b) = \left(R\left(T^{j-1}\left(a,b\right)\right)\right)_{j=1}^{h},$$ and $$\tilde{B} = \frac{\pi^2}{3|I|}B = \prod_{i=1}^{k} \left[\frac{\pi^2}{3|I|}c_i, \frac{\pi^2}{3|I|}d_i\right].$$ Applying our equidistribution result to (\ref{eq:hspacing}), we recover Theorem 1.1 of~\cite{ABCZ}:

\begin{Cor}\label{cor:ABCZ} As $Q \rightarrow \infty$, $$\frac{ |\{\gamma_i \in \F_I(Q): \frac{3}{\pi^2}Q^2\vv_{i,h} \in \B\}|}{N_I(Q)} \rightarrow m\left ( R_h^{-1}\left(\tilde{B}\right)\right). $$
\end{Cor}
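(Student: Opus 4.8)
The plan is to reduce Corollary~\ref{cor:ABCZ} to Theorem~\ref{theorem:equidist:periodic} by the same mechanism used to deduce Corollary~\ref{cor:hall}, the only new ingredient being the measurability (and boundary-measure-zero) of the preimage set $R_h^{-1}(\tilde B)$ under the weak-$^*$ limit $m$. First I would recall the identity, established in \S\ref{subsubsec:period} via (\ref{eq:farey:bcz}) and (\ref{eq:return}), that $Q^2(\gamma_{i+j}-\gamma_{i+j-1}) = (R\circ T^{i+j-2})(\tfrac1Q,1)$, so that the rescaled $h$-tuple $\frac{3}{\pi^2}|I|Q^2\vv_{i,h}$ equals $\frac{3}{\pi^2}|I|\cdot R_h(T^{i-1}(\tfrac1Q,1))$. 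Hence the left-hand side of (\ref{eq:hspacing}) counts exactly those indices $i$ with $\gamma_i\in I$ for which $T^{i-1}(\tfrac1Q,1)$ lands in $R_h^{-1}(\tilde B)$, which is $\rho_{Q,I}(R_h^{-1}(\tilde B))$ after normalizing by $N_I(Q)$; this is precisely the displayed equality, so no work is needed there beyond bookkeeping of the cyclic indices in $\Z/N\Z$.

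Next I would apply Theorem~\ref{theorem:equidist:periodic}, which gives $\rho_{Q,I}\to m$ weak-$^*$. For a continuous bounded test function this gives convergence of integrals, but here we are evaluating on the indicator of a set, so the standard portmanteau argument requires $m(\partial R_h^{-1}(\tilde B))=0$. To get this I would argue that $R=1/(ab)$ is real-analytic on the interior of each tile $\Om_k$, and that $T$ is piecewise-rational (indeed piecewise-linear after the projectivization $(a,b)\mapsto(a,b)A_k^T$) and a bijection of $\Om$; consequently each coordinate $R\circ T^{j-1}$ of $R_h$ is piecewise real-analytic with respect to the countable partition of $\Om$ into the common refinement of the tiles $\Om_k$ and their $T^{-1},\dots,T^{-(h-1)}$-preimages. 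On each piece the level sets $\{R\circ T^{j-1}=c\}$ are real-analytic curves, hence $m$-null, so the boundary $\partial R_h^{-1}(\tilde B)$ — contained in the union of finitely many such level sets (for the finitely many endpoints $c_i,d_i$ scaled by $\pi^2/3|I|$) together with the measure-zero tile-boundaries — is $m$-null. This justifies passing to the limit in (\ref{eq:hspacing}) and yields the claimed convergence to $m(R_h^{-1}(\tilde B))$.

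The main obstacle is the boundary-measure-zero verification: one has to be careful that although $\Om$ is partitioned into \emph{countably} many tiles, the preimage $R_h^{-1}(\tilde B)$ is controlled by only \emph{finitely many} analytic constraints up to null sets, and that the tails $\bigcup_{k\ge K}\Om_k$ (which have total measure $O(K^{-2})$) do not spoil the argument. Concretely, since $R$ is bounded below by $1$ and $\tilde B$ is a bounded box, the constraint $R_h\in\tilde B$ forces each $R\circ T^{j-1}\le \frac{\pi^2}{3|I|}\max_i d_i$, which confines the relevant orbit segment to a compact part of $\Om$ meeting only finitely many tiles; on that compact piecewise-analytic region the level-set argument applies verbatim. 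An alternative, perhaps cleaner, route that sidesteps the explicit tile combinatorics is to note that $m$ is absolutely continuous with bounded density and that $R_h$ is a.e.\ real-analytic and nonconstant on each piece, so its pushforward of $m$ is absolutely continuous with respect to Lebesgue measure on $\R^h$; then $\mathrm{Leb}(\partial\tilde B)=0$ gives $m(\partial R_h^{-1}(\tilde B))=0$ immediately. With either version in hand, Corollary~\ref{cor:ABCZ} (and, taking $h=1$, Corollary~\ref{cor:hall}) follows. Finally I would remark that the normalization constant $\frac{3}{\pi^2}|I|$ is exactly $N_I(Q)/Q^2$ in the limit, as recorded in \S\ref{subsubsec:Hall}, so the statement is consistent with Theorem 1.1 of~\cite{ABCZ}.
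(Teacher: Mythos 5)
Your main line of argument is correct and in fact makes explicit a step that the paper treats rather tersely. The paper invokes Lemma~\ref{lem:L1}, which reduces matters to showing that the indicator function $\chi_{R_h^{-1}(\tilde B)}$ lies in the closure $W_0$ of $C_0(\Omega)$, but the text only verifies the boundedness bound (\ref{ieq:upperbound}) and does not spell out the approximation. Your portmanteau argument — that weak-$^*$ convergence of the probability measures $\rho_{Q,I}\to m$ upgrades to convergence on indicator functions once one knows $m\bigl(\partial R_h^{-1}(\tilde B)\bigr)=0$ — together with the level-set observation that $R\circ T^{j-1}$ is piecewise real-analytic and nonconstant on the (finitely many, by your compactness remark) relevant tiles, is a legitimate and more self-contained way to justify the same passage. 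The care you take about discontinuity sets of $T$ having measure zero is also on point.

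However, your ``alternative, perhaps cleaner, route'' contains a genuine error for $h\ge 3$. The map $R_h:\Omega\to\R^h$ has a two-dimensional domain, so its image is a set of Hausdorff dimension at most $2$ in $\R^h$; for $h\ge 3$ this image is Lebesgue-null, and hence the pushforward $(R_h)_*m$ is \emph{singular}, not absolutely continuous, with respect to Lebesgue measure on $\R^h$. Consequently the implication ``$\operatorname{Leb}(\partial\tilde B)=0$ implies $m(\partial R_h^{-1}(\tilde B))=0$'' does not follow that way, and this alternative must be discarded (it is only salvageable for $h\le 2$, and even then one has to check that the Jacobian of $R_2$ is nonvanishing almost everywhere on each analyticity piece). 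The good news is that your primary level-set argument does not rely on absolute continuity of the pushforward — it only requires that individual fibers $\{R\circ T^{j-1}=c\}$ be $m$-null, which holds because these are real-analytic curves in each tile — so the proof stands once the faulty alternative is removed.
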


\subsubsection{Density of the limiting distribution}\label{subsubsec:density} Corollary~\ref{cor:hall} allows one to explicitly calculate the density of the limiting distribution of consecutive gaps, that is, of Hall's distribution. For $d >0$, we define the distribution function $G_c(d)$ to be the asymptotic proportion of (normalized) gaps of size at most $b$, that is, $$G_c(d) = \lim_{Q \rightarrow \infty}  \frac{ |\{\gamma_i \in \F_I(Q): \frac{3}{\pi^2}|I|Q^2(\gamma_{i+1} - \gamma_{i}) \in (0, d)\}|}{N_I(Q)}.$$ By Corollary~\ref{cor:hall}, we can write $$G_c(d) = m\left (R^{-1}\left(0, \frac{\pi^2}{3|I|}d\right)\right).$$ Since for $d < \frac{3|I|}{\pi^2}$, the curve $R(a,b) =  \frac{\pi^2}{3|I|}d$ does not intersect $\Om$, we have $G_c(d) = 0$. In particular, the distribution does not have any support at $0$. For $d \geq \frac{3|I|}{\pi^2},$ $G_c(d) >0$. The distribution has another point of non-differentiability at $d = 4\frac{3}{\pi^2},$ when the curve $R(a, b) = \frac{3|I|}{\pi^2}d$ intersects the bottom line $a+b=1$. The picture of the distribution for $|I| = [0, 1)$ is given in Figure~\ref{fig:bczdist}, reproduced from~\cite{BZsurvey}. In Figure~\ref{fig:bczregion}, we display a picture of the region $R^{-1}([c, d])$.

\begin{figure}\caption{The distribution function for the gaps}

\begin{center}
  \includegraphics[width=0.5\textwidth, height = 50mm]{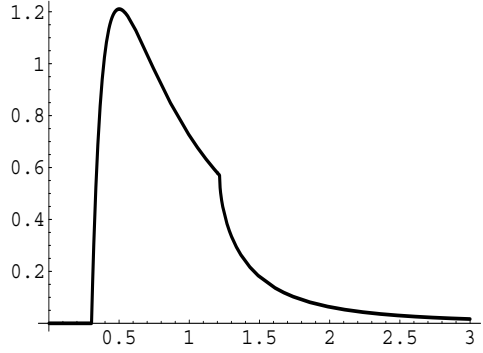}
\end{center}\label{fig:bczdist}

\end{figure}

\begin{figure}[htbp]\caption{The region \textcolor{yellow}{$R^{-1}([c, d])$} $\subset$ \textcolor{blue}{$\Om$}.\medskip}\label{fig:bczregion}
    \includegraphics[width=50mm, height = 50mm]{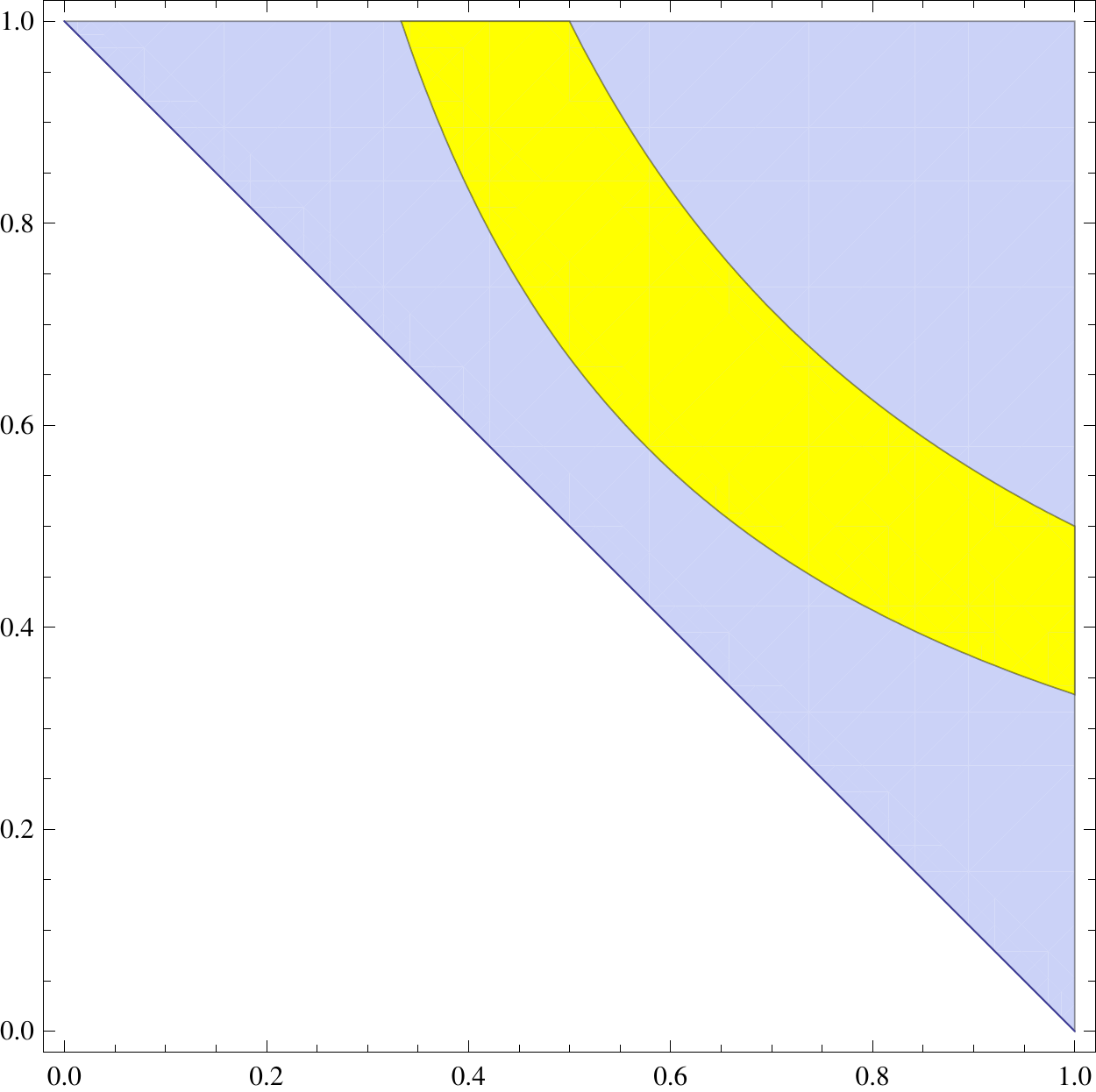}

\end{figure}

\subsubsection{Indices of Farey fractions}\label{subsubsec:Index} Following~\cite{Hall-Shiu}, we define the \emph{index} of the Farey fraction $\gamma_i = \frac{a_i}{q_i}\in \F(Q)$ by $$\nu(\gamma_i) : = \frac{q_{i-1} + q_{i+1}}{q_i} =\left\lfloor \frac{Q+q_{i-1}}{q_i} \right\rfloor.$$ In~\cite{BCZ}, this was reinterpreted in terms of the map $T$ and the function $\kappa: \Om \rightarrow \N$. Recall that  $\kappa(x, y) = \left\lfloor \frac{1+x}{y} \right\rfloor$. Precisely, we have $$\nu(\gamma_i) = \kappa\left(\frac{q_{i-1}}{Q}, \frac{q_i}{Q}\right) = \left(\kappa \circ T^i\right)\left(\frac{1}{Q}, 1\right).$$

\noindent Fix a (non-trivial) interval $I \subset [0, 1)$, and an exponent $\alpha \in (0, 2)$. We consider the average $$\rho_{Q, I} ( \kappa^{\alpha}) = \frac{1}{N_I(Q)}\sum_{\gamma_i \in \F_I(Q)} \nu(\gamma_i)^{\alpha}.$$ Applying Theorem~\ref{theorem:equidist:periodic} to $\kappa^{\alpha}$ (note that $\alpha \in (0,2)$ implies that $\kappa^{\alpha} \in L^1$), we obtain a result originally due to Boca-Gologan-Zaharescu~\cite{BGZ}:

\begin{Cor}\label{cor:BGZ} As $Q \rightarrow \infty$, $$ \frac{1}{N_I(Q)}\sum_{\gamma_i \in \F_I(Q)} \nu(\gamma_i)^{\alpha} \rightarrow  \int_{\Om} \kappa^{\alpha} dm = 2 \sum_{k=1}^{\infty} k^{\alpha} \area(\Om_k) . $$ Here $\Om_k = \kappa^{-1}(k)$.
\end{Cor}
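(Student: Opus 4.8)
The plan is to bootstrap from the weak-$^*$ convergence $\rho_{Q,I}\to m$ supplied by Theorem~\ref{theorem:equidist:periodic}. Since $\kappa^\alpha$ is \emph{unbounded}, this is not immediate; it suffices to show that the family $\{\kappa^\alpha\}$ is uniformly integrable for $\{\rho_{Q,I}\}_Q$, i.e. that $\sup_Q\rho_{Q,I}(\kappa^{\alpha'})<\infty$ for some $\alpha'$ with $\alpha<\alpha'<2$. Two easy reductions. First, since $\kappa^{\alpha'}\ge 0$ and the orbit points counted by $\rho_{Q,I}$ form a sub-collection of those counted by $\rho_Q$,
\[
\rho_{Q,I}(\kappa^{\alpha'})\;\le\;\frac{N(Q)}{N_I(Q)}\,\rho_Q(\kappa^{\alpha'}),
\]
and $N(Q)/N_I(Q)\to 1/|I|$ by equidistribution of $\F(Q)$, so it is enough to bound $\rho_Q(\kappa^{\alpha'})$ uniformly. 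Second, on $\Om$ one has the crude pointwise bound
\[
\kappa(a,b)=\left\lfloor\tfrac{1+a}{b}\right\rfloor\le\tfrac{2}{b}\le\tfrac{2}{ab}=2R(a,b)
\]
(using $0<a\le 1$), so it is enough to bound $\sup_Q\rho_Q(R^{\alpha'})$. I will take $\alpha'\in\bigl(\max(\alpha,1),2\bigr)$, which is non-empty.

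For the last bound I would use the identification, implicit in~\eqref{eq:farey:bcz}, of the periodic orbit of $(\tfrac1Q,1)$ with the consecutive Farey pairs: the $N(Q)$ orbit points are exactly $T^{i-1}(\tfrac1Q,1)=(\tfrac{q_i}{Q},\tfrac{q_{i+1}}{Q})$, and $(q_i,q_{i+1})$ runs over all pairs $(c,d)$ with $1\le c,d\le Q$, $c+d>Q$, $\gcd(c,d)=1$ (the primitive lattice points in $Q\,\Om$). Since $R(\tfrac cQ,\tfrac dQ)=Q^2/(cd)$, this gives
\[
\rho_Q(R^{\alpha'})\;=\;\frac{Q^{2\alpha'}}{N(Q)}\!\!\sum_{\substack{1\le c,d\le Q,\ c+d>Q\\ \gcd(c,d)=1}}\!\!\frac{1}{(cd)^{\alpha'}}\;\le\;\frac{Q^{2\alpha'}}{N(Q)}\!\!\sum_{\substack{1\le c,d\le Q\\ c+d>Q}}\!\!\frac{1}{(cd)^{\alpha'}}.
\]
An elementary estimate of the last sum (split according to $c\le Q/2$ or $c>Q/2$; for fixed $c$ the index $d$ runs over $(Q-c,Q]$, and $\sum_{d>Q-c}d^{-\alpha'}\ll(Q-c+1)^{1-\alpha'}$ since $\alpha'>1$) bounds it by $O(Q^{2-2\alpha'})$, valid because $\alpha'<2$. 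With $N(Q)\asymp Q^2$ this yields $\sup_Q\rho_Q(R^{\alpha'})<\infty$, hence $\sup_Q\rho_{Q,I}(\kappa^{\alpha'})<\infty$ and the desired uniform integrability. This summation bound is the finite-level shadow of $R^{\alpha'}\in L^1(m)\iff\alpha'<2$ (equivalently $\kappa^{\alpha'}\in L^1(m)\iff\alpha'<2$, governed by $\area(\Om_k)=O(k^{-3})$), and it is the one genuinely arithmetic input; I expect it to be the main thing requiring care, everything else being soft.

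With uniform integrability in hand, the Corollary follows by truncation. For $K>0$ write $\kappa^\alpha=(\kappa^\alpha\wedge K)+(\kappa^\alpha-K)^+$. The function $\kappa^\alpha\wedge K$ is bounded and is a finite $\R$-linear combination of indicators $\mathbf 1_{\Om_k}$ (with a tail coefficient $K$ on $\bigcup_{k\ge K_0}\Om_k$); each $\Om_k$ is an $m$-continuity set because $\partial\Om_k$ lies in $\bigcup_k\partial\Om_k\cup\partial\Om$, a countable union of line segments, hence Lebesgue-null. Therefore, by Theorem~\ref{theorem:equidist:periodic} (portmanteau), $\rho_{Q,I}(\kappa^\alpha\wedge K)\to m(\kappa^\alpha\wedge K)$ as $Q\to\infty$, for each fixed $K$. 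On the other hand uniform integrability gives $\sup_Q\rho_{Q,I}\bigl((\kappa^\alpha-K)^+\bigr)\to0$ and, since $\kappa^\alpha\in L^1(m)$ (as $\alpha<2$), $m\bigl((\kappa^\alpha-K)^+\bigr)\to0$, both as $K\to\infty$. An $\varepsilon/3$ argument now gives $\rho_{Q,I}(\kappa^\alpha)\to m(\kappa^\alpha)$. Finally, since $dm=2\,da\,db$ and $\kappa\equiv k$ on $\Om_k=\kappa^{-1}(k)$,
\[
\int_\Om\kappa^\alpha\,dm=\sum_{k\ge1}k^\alpha\,m(\Om_k)=2\sum_{k\ge1}k^\alpha\,\area(\Om_k),
\]
which is the stated value of the limit.
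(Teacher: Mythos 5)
Your overall strategy is sound, and it takes a genuinely different route from the paper's proof of the uniform-integrability bound. The paper establishes $\sup_Q \rho_{Q,I}(\kappa^\alpha)<\infty$ by invoking Lemma~3.4 of~\cite{BCZ2}, a \emph{dynamical} fact (any orbit point with $\kappa \ge 4r+2$ is flanked by roughly $r$ consecutive points where $\kappa$ equals $1$ or $2$), which together with the trivial bound $\kappa\le 2Q$ on the support of $\rho_{Q,I}$ yields the tail estimate $\rho_{Q,I}(\kappa>N)\ll_\beta N^{-\beta}$. You instead use the \emph{arithmetic} parametrization of the orbit by consecutive Farey denominator pairs $(q_i,q_{i+1})$ to reduce $\rho_Q(R^{\alpha'})$ to an explicit lattice-point sum; this is pleasantly self-contained (no appeal to~\cite{BCZ2}) and it also makes the ``finite-level shadow of $R^{\alpha'}\in L^1(m)$'' picture quite transparent. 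After that, your truncation/portmanteau argument plays the role of the paper's abstract Lemma~\ref{lem:L1}, and is if anything more careful about the $m$-continuity of the truncations.

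There is one step that, as written, is not quite sufficient. In estimating
\[
S=\sum_{\substack{1\le c,d\le Q\\ c+d>Q}}(cd)^{-\alpha'}=\sum_{c=1}^{Q}c^{-\alpha'}\!\!\sum_{Q-c<d\le Q}\!\!d^{-\alpha'},
\]
you mention the split $c\le Q/2$ vs.\ $c>Q/2$ but the only bound you state for the inner sum is the tail estimate $\sum_{d>Q-c}d^{-\alpha'}\ll(Q-c+1)^{1-\alpha'}$. That bound is the right one for the range $c>Q/2$, where it gives a contribution $\ll Q^{-\alpha'}\sum_{j\le Q/2}j^{1-\alpha'}\ll Q^{2-2\alpha'}$. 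But for $c\le Q/2$ it is far too crude: there $(Q-c+1)^{1-\alpha'}\asymp Q^{1-\alpha'}$ uniformly, so the contribution would be $\ll Q^{1-\alpha'}\sum_{c}c^{-\alpha'}\ll Q^{1-\alpha'}$, which for $1<\alpha'<2$ \emph{dominates} $Q^{2-2\alpha'}$ and leads to $\rho_Q(R^{\alpha'})\ll Q^{\alpha'-1}\to\infty$ --- i.e.\ the argument breaks. The fix is straightforward and presumably what you intended: for $c\le Q/2$ the inner sum has exactly $c$ terms, each of size $\ll Q^{-\alpha'}$ since every $d$ there exceeds $Q/2$, so $\sum_{Q-c<d\le Q}d^{-\alpha'}\ll cQ^{-\alpha'}$, and then this range contributes $Q^{-\alpha'}\sum_{c\le Q/2}c^{1-\alpha'}\ll Q^{-\alpha'}\cdot Q^{2-\alpha'}=Q^{2-2\alpha'}$ (using $\alpha'<2$). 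With this corrected bound on the $c\le Q/2$ range, your claimed $S=O(Q^{2-2\alpha'})$, and hence $\sup_Q\rho_Q(R^{\alpha'})<\infty$, goes through, and the rest of the argument is correct.
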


\subsubsection{Powers of denominators}\label{subsec:classical} We can also obtain general results on sums associated to Farey fractions by applying Theorem~\ref{theorem:equidist:periodic} to various functions in $L^1(\Om, m)$. For example, if we define $f_{s, t}: \Om \rightarrow \C$ by $$f_{s,t}(a,b) = a^s b^t,$$ for $s, t \in \C$ with $\Re s, \Re t  > -1$, and define $$B_{s, t} := \|f_{s, t} \|_1 = \int_{\Om} f_{s,t}(x,y) dm = 2\left(\frac{1}{(s+1)(t+1)} -\frac{\Gamma(s+1)\Gamma(t+1)}{\Gamma(s+t+3)}\right),$$ we obtain results originally due to Hall-Tanenbaum~\cite{HT}.

\begin{Theorem}\label{theorem:classic} Fix a non-trivial interval $I \subset [0, 1)$. Let $s, t \in \C$. Then, if $\Re s, \Re t > -1$, \begin{equation}\label{equation:st}\lim_{Q \rightarrow \infty}  \frac{1}{N_I(Q) Q^{s+t}}\sum_{\gamma_i \in \F_I(Q)} q_i^s q_{i+1}^t = \int_{\Om} x^s y^t dm = B_{s, t}.\end{equation} In particular, for $s = 1$ and $t=0$, we have \begin{equation}\label{equation:qi:Q}\lim_{Q \rightarrow \infty}  \frac{1}{N_I(Q)}\sum_{\gamma_i \in \F_I(Q)} \frac{q_i}{Q} = \int_{\Om} x dm = \frac{2}{3}.\end{equation} In addition, for $s = -1$ and $t =0$, \begin{equation}\label{equation:Q:qi}\lim_{Q \rightarrow \infty}  \frac{1}{N_I(Q)}\sum_{\gamma_i \in \F_I(Q)} \frac {Q }{q_i} =  \int_{\Om} \frac{1}{x} dm = 2,\end{equation} and for $s = t = -1$, we have \begin{equation}\label{equation:classic}\lim_{Q \rightarrow \infty}  \frac{Q^2}{N_I(Q)}\sum_{\gamma_i \in \F_I(Q)} \frac{1}{q_i q_{i+1}} = \int_{\Om} \frac{1}{xy} dm = \frac{\pi^2}{3},\end{equation} which yields the classical result $$N_I(Q) \sim |I| \frac{3}{\pi^2} Q^2.$$
\end{Theorem}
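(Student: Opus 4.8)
The strategy is to recognize the normalized sum in \eqref{equation:st} as the integral of the explicit function $f_{s,t}(a,b)=a^sb^t$ against the empirical measures $\rho_{Q,I}$, and then to apply the equidistribution Theorem~\ref{theorem:equidist:periodic}. Iterating the Boca--Cobeli--Zaharescu identity \eqref{eq:farey:bcz} from the base point $(\tfrac1Q,1)=(\tfrac{q_1}Q,\tfrac{q_2}Q)$ gives $T^{\,i-1}(\tfrac1Q,1)=(\tfrac{q_i}Q,\tfrac{q_{i+1}}Q)$, so that
\[
\frac{1}{N_I(Q)\,Q^{s+t}}\sum_{\gamma_i\in\F_I(Q)}q_i^{\,s}q_{i+1}^{\,t}
=\frac{1}{N_I(Q)}\sum_{i:\,\gamma_i\in I}f_{s,t}\!\Big(\tfrac{q_i}Q,\tfrac{q_{i+1}}Q\Big)
=\rho_{Q,I}(f_{s,t})+O\!\Big(\tfrac1Q\Big),
\]
the error accounting for the $O(1)$ boundary indices by which the two windows differ (each such summand is $\le 2Q$ on the orbit, since $q_iq_{i+1}\ge Q$ there). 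When $\Re s,\Re t\ge0$ the function $f_{s,t}$ is bounded and continuous on $\Om$, so Theorem~\ref{theorem:equidist:periodic} gives at once $\rho_{Q,I}(f_{s,t})\to m(f_{s,t})=B_{s,t}$; here the last equality is the elementary Beta-integral evaluation of $\int_0^1\!\!\int_{1-a}^{1}a^sb^t\,2\,db\,da$. This already yields \eqref{equation:qi:Q} ($s=1,\,t=0$).

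The remaining (and more interesting) case is when $\Re s$ or $\Re t$ is negative, so that $f_{s,t}$ is unbounded: $a^{\Re s}$ blows up as $a\to0^+$, which on $\Om$ forces $(a,b)\to(0,1)$, and $b^{\Re t}$ blows up near $(1,0)$. The key point is the pointwise domination
\[
|f_{s,t}(a,b)|=a^{\Re s}b^{\Re t}\;\le\;2R(a,b)=\frac{2}{ab}\qquad\text{on }\Om ,
\]
valid for all $\Re s,\Re t\ge-1$: if $\Re s<0$ then $a^{\Re s}\le a^{-1}\le(ab)^{-1}$ on $\Om$ since $a,b\le1$, symmetrically for $b$, and $R\ge1$ on $\Om$. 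Since $R\in L^1(\Om,m)$ with $m(R)=\tfrac{\pi^2}3$, it suffices to establish the \emph{uniform integrability} of $R$ along the family $\{\rho_{Q,I}\}$, i.e.
\[
\lim_{M\to\infty}\ \sup_{Q}\ \rho_{Q,I}\!\big(R\,\mathbf 1_{\{R>M\}}\big)=0 .
\]
Granting this, a continuous cutoff $\chi_M$ on $\Om$ with $\mathbf 1_{\{R\le M\}}\le\chi_M\le\mathbf 1_{\{R\le2M\}}$ makes $f_{s,t}\chi_M$ bounded and continuous, so $\rho_{Q,I}(f_{s,t}\chi_M)\to m(f_{s,t}\chi_M)$ by Theorem~\ref{theorem:equidist:periodic}, while $|\rho_{Q,I}(f_{s,t}(1-\chi_M))|\le2\rho_{Q,I}(R\,\mathbf 1_{\{R>M\}})$ and $|m(f_{s,t}(1-\chi_M))|\le2m(R\,\mathbf 1_{\{R>M\}})$ are uniformly small in $M$; letting $Q\to\infty$ and then $M\to\infty$ gives $\rho_{Q,I}(f_{s,t})\to m(f_{s,t})=B_{s,t}$. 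This covers the boundary exponents as well, in particular \eqref{equation:Q:qi} ($s=-1,\,t=0$) and \eqref{equation:classic} ($s=t=-1$, where $f_{-1,-1}=R$ itself).

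The main obstacle is thus the uniform integrability of $R$, which I would prove by an elementary Farey count. On the orbit of $(\tfrac1Q,1)$ one has $R=\tfrac{Q^2}{q_iq_{i+1}}$, so $\rho_{Q,I}(R\,\mathbf 1_{\{R>M\}})\le\frac1{N_I(Q)}\sum_{q_iq_{i+1}<Q^2/M}\frac{Q^2}{q_iq_{i+1}}$, the sum over all consecutive pairs in $\F(Q)$. Since $q_i+q_{i+1}>Q$ with $q_i,q_{i+1}\le Q$, the condition $q_iq_{i+1}<Q^2/M$ forces $q:=\min(q_i,q_{i+1})<2Q/M$ and then $\frac{Q^2}{q_iq_{i+1}}\le\frac{2Q}{q}$; grouping pairs by $q$, which occurs for at most $2\phi(q)\le2q$ indices, the sum is $\le\sum_{q<2Q/M}2q\cdot\frac{2Q}{q}\le\frac{8Q^2}{M}$, and dividing by $N_I(Q)\gg_I Q^2$ (the classical lower bound for Farey counts) gives a bound $O_I(1/M)\to0$ that is uniform in $Q$. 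Finally, running $s=t=-1$ of \eqref{equation:st} backwards together with the telescoping identity $\sum_{\gamma_i\in\F_I(Q)}\frac1{q_iq_{i+1}}=\sum(\gamma_{i+1}-\gamma_i)\to|I|$ and $m(R)=\tfrac{\pi^2}3$ recovers $N_I(Q)\sim|I|\tfrac{3}{\pi^2}Q^2$. I remark that this same mechanism --- dominating an $L^1$ observable by $R$ (or by $\kappa$) and proving uniform integrability of that observable --- is precisely what is needed to deduce Corollaries~\ref{cor:hall}, \ref{cor:ABCZ} and \ref{cor:BGZ} from Theorem~\ref{theorem:equidist:periodic}.
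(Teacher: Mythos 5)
Your proof is correct and follows the same route as the paper: identify the normalized sum with $\rho_{Q,I}(f_{s,t})$, dominate $|f_{s,t}|$ by $G_{-1,-1}=R$ on $\Om$ for $\Re s,\Re t\ge-1$, compute $\rho_{Q,I}(G_{-1,-1})=\frac{|I|Q^2}{N_I(Q)}$ by telescoping, invoke the lower bound $N_I(Q)\gg_I Q^2$ (the paper's Lemma~\ref{lem:counting}), and finish by a truncation argument driven by Theorem~\ref{theorem:equidist:periodic}. Where you diverge, and to your credit, is in supplying an explicit proof of the \emph{uniform integrability} of $R$ along $\{\rho_{Q,I}\}$ by a direct Farey count. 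The paper instead packages the passage to $L^1$ observables in Lemma~\ref{lem:L1}, whose conclusion holds for $G\in W_0$ (the closure of $C_0(\Omega)$ in the $W$-norm); but the proof of Theorem~\ref{theorem:classic} in \S\ref{subsec:moments} only verifies condition~(\ref{ieq:upperbound}), i.e.\ $G_{s,t}\in W$, and never checks that $G_{s,t}$ actually lies in the closure $W_0$. Membership in $W_0$ is precisely the statement $\lim_{M\to\infty}\sup_Q\rho_{Q,I}(R\,\mathbf 1_{\{R>M\}})=0$ that you prove, so your argument fills a step that the paper glosses over. Two small remarks: the factor $2$ in your pointwise bound $|f_{s,t}|\le 2R$ is superfluous, since $a^{\Re s}\le a^{-1}$ and $b^{\Re t}\le b^{-1}$ on $\Om$ already give $|f_{s,t}|\le R$; and the theorem's stated hypothesis ``$\Re s,\Re t>-1$'' should evidently read ``$\ge -1$'' (as in the paper's own \S\ref{subsec:moments}), since the highlighted corollaries with $s=-1$ or $t=-1$ sit exactly on the boundary, and it is for these that uniform integrability of $R$ itself---not mere $o(R)$ decay---is indispensable.
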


\subsubsection{Excursions}\label{subsubsec:excursions} Finally, we remark that applying the equisitribution result Theorem~\ref{theorem:equidist:periodic} to the functions $M$ (and $\frac{1}{M}$) defined in (\ref{eq:max}) above, we obtain an amusing statistical result on Farey fractions:

\begin{Cor}\label{cor:farey:amusing} Let $I \subset [0, 1)$ be a non-trivial interval. Then

$$ \lim_{Q \rightarrow \infty} \frac{1}{N_I(Q)}\sum_{\gamma_i \in \F_I(Q)} \min \left(\frac{Q}{q_i}, \frac{Q}{q_{i+1}}, \frac{q_i + q_{i+1}}{Q} \right) =  \frac{2}{3}\left(13-8\sqrt 2\right) $$

$$ \lim_{Q \rightarrow \infty} \frac{1}{N_I(Q)}\sum_{\gamma_i \in \F_I(Q)} \max \left(\frac{q_i}{Q}, \frac{q_{i+1}}{Q}, \frac{Q}{q_i + q_{i+1}} \right) =  \frac{2}{3}\left(7-4\sqrt 2\right)$$

\end{Cor}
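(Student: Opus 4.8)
The plan is to deduce Corollary~\ref{cor:farey:amusing} directly from Theorem~\ref{theorem:equidist:periodic} applied to the explicit functions $M$ and $\frac1M$ defined in~(\ref{eq:max}). First I would record the dictionary, already established in \S\ref{subsubsec:period}, between points of the periodic orbit of $(\frac1Q,1)$ under $T$ and consecutive pairs of Farey denominators: $T^{i-1}(\frac1Q,1) = (\frac{q_i}{Q},\frac{q_{i+1}}{Q})$, with the indices read cyclically in $\Z/N\Z$. Plugging $(a,b)=(\frac{q_i}{Q},\frac{q_{i+1}}{Q})$ into $M(a,b)=\max\{a,b,\frac1{a+b}\}$ gives exactly $\max\{\frac{q_i}{Q},\frac{q_{i+1}}{Q},\frac{Q}{q_i+q_{i+1}}\}$, and into $\frac1M$ gives $\min\{\frac{Q}{q_i},\frac{Q}{q_{i+1}},\frac{q_i+q_{i+1}}{Q}\}$. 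Hence the two averages in the Corollary are precisely $\rho_{Q,I}(\frac1M)$ and $\rho_{Q,I}(M)$ in the notation of Theorem~\ref{theorem:equidist:periodic}.

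Next I would check that $M$ and $\frac1M$ are continuous and bounded on the closure of $\Om$, so that weak-$^*$ convergence $\rho_{Q,I}\to m$ can be applied to them without any integrability subtlety. Indeed $\frac1{a+b}$ ranges over $[\frac12,1)$ on $\Om$ while $a,b\in(0,1]$, so $M$ is bounded between constants, and $\frac1M$ likewise; continuity is clear since $M$ is a max of three continuous positive functions. Therefore Theorem~\ref{theorem:equidist:periodic} gives $\rho_{Q,I}(M)\to\int_\Om M\,dm$ and $\rho_{Q,I}(\frac1M)\to\int_\Om\frac1M\,dm$ as $Q\to\infty$.

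Finally I would evaluate the two integrals $\int_\Om M\,dm$ and $\int_\Om\frac1M\,dm$ with $dm=2\,da\,db$; by Theorem~\ref{theorem:maxima} these equal $\frac23(7-4\sqrt2)$ and $\frac23(13-8\sqrt2)$ respectively, which are exactly the claimed constants. (The computation is a routine but slightly fussy exercise: the region $\Om$ splits along the curves $a=\frac1{a+b}$ and $b=\frac1{a+b}$, i.e. $a(a+b)=1$ and $b(a+b)=1$, into three pieces on which $M$ equals $a$, $b$, or $\frac1{a+b}$ respectively, and one integrates the corresponding rational/radical expressions over each piece; the $\sqrt2$ arises from evaluating things like $\int 1/(a+b)$ along the hyperbola boundary, and the symmetry $a\leftrightarrow b$ halves the work.)

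The only genuine obstacle is the bookkeeping in that final integral evaluation — decomposing $\Om$ correctly along the two hyperbolic arcs and getting the $7-4\sqrt2$ and $13-8\sqrt2$ constants exactly right. Everything else is a direct substitution into Theorem~\ref{theorem:equidist:periodic}; in fact, since these integrals were already needed for Theorem~\ref{theorem:maxima}, no new computation is strictly required and the Corollary is immediate once the Farey-denominator dictionary is in place.
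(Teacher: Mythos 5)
Your proposal is correct and follows essentially the same route as the paper: apply the equidistribution result for the measures $\rho_{Q,I}$ to the observables $G_{\max}=M$ and $G_{\min}=1/M$, using the Farey-denominator dictionary $T^{i-1}(\tfrac1Q,1)=(\tfrac{q_i}Q,\tfrac{q_{i+1}}Q)$ to identify the sums in the Corollary with $\rho_{Q,I}(1/M)$ and $\rho_{Q,I}(M)$, and then read off the limiting values from the integrals already computed for Theorem~\ref{theorem:maxima}. The one place you glide over a subtlety the paper handles explicitly is the passage from weak-$^*$ convergence to convergence of integrals: $\Om$ is noncompact, so weak-$^*$ convergence a priori only controls $C_0(\Om)$-observables, and $M$, $1/M$ do not vanish at infinity — the paper invokes Lemma~\ref{lem:L1} (the $W_0$ framework) to cover this; your remark that $M$ and $1/M$ extend continuously to the closure (indeed both tend to $1$ at the cusp $(0,1)$), combined with the fact that $\rho_{Q,I}$ and $m$ are all probability measures so no mass escapes, achieves the same thing.
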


\subsection{Geometry of numbers}\label{subsubsec:geomnumbers} Let $\La \in X_2$ be a unimodular lattice, and fix $t >0$. Let $$S_t(\La) = \{s_1 < s_2 < \ldots < s_n< \ldots\}$$ denote the slopes of the lattice vectors in the vertical strip $V_t \subset \R^2$ given by $$V_t: =\{(x,y): x \in (0, t], y >0\},$$ written in increasing order.  Let $$G_{N,t}(\La) = \{ s_{n+1} - s_n: 0 \le n \le N\}$$ denote the sequence of gaps in this sequence, viewed as a \emph{set}, so $|G_{N,t} (\La)| \le N$. Our main geometry of numbers result states that for lattices $\La$ without vertical vectors, this sequences has the same limiting distribution as the gaps for Farey fractions, namely, Hall's distribution. That is:

\begin{Theorem}\label{theorem:generic:lattice} Suppose $\La$ does not have vertical vectors, and let $0 \le c \le d \le \infty$. Then $$\lim_{N \rightarrow \infty} \frac{1}{N} | G_{N, t}(\La) \cap (c, d)| = 2 m(R^{-1}(c,d)).$$
\end{Theorem}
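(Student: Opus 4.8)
The goal is to prove Theorem~\ref{theorem:generic:lattice}: for a lattice $\La$ without vertical vectors, the gaps between consecutive slopes of lattice vectors in the vertical strip $V_t$ have the same limiting distribution as Farey gaps, namely $2m(R^{-1}(c,d))$.

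\textbf{Plan of proof.} The plan is to reduce the problem to an application of the ergodic theorem for the BCZ map $T$, exactly in the spirit of Theorems~\ref{theorem:minima} and~\ref{theorem:maxima}. First I would set up a dictionary between slopes of lattice vectors in the strip $V_t$ and visits of the horocycle orbit $\{h_s \La\}$ to the transversal $\Om$. The key geometric observation is that a lattice $g\Z^2$ has a vector with a prescribed (small) slope in the strip $V_t$ precisely when, after suitable rescaling by the diagonal flow (or equivalently after applying $h_s$ for the right parameter $s$), that vector becomes horizontal of length $\le 1$; by Theorem~\ref{theorem:main}, such lattices are exactly those lying on the transversal $\Om$. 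Concretely, for $\La$ with no vertical vector, I would show that the slopes $s_1 < s_2 < \cdots$ in $V_t$ are in order-preserving bijection with the return times $\{s_n\}$ of the horocycle orbit through (a diagonal-flow rescaling of) $\La$ to $\Om$, and that the gap $s_{n+1} - s_n$ is, up to an explicit factor involving $t$ and the return time, governed by the roof function $R$ evaluated at the corresponding point $T^n(a,b)$ of the BCZ orbit. This is the analogue of the identity $Q^2(\gamma_{i+1}-\gamma_i) = (R \circ T^{i-1})(\tfrac1Q,1)$ used in the Farey case, but now for a generic (non-periodic) orbit rather than a periodic one.

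\textbf{Key steps.} (1) Normalize: rescale $\La$ by $g_\tau$ for appropriate $\tau = \tau(t)$ so that the strip $V_t$ becomes the unit strip $V_1$, reducing to $t=1$; keep track of how gaps scale. (2) Establish the bijection between lattice vectors of positive slope in $V_1$ with $x$-coordinate $\le 1$ and primitive vectors, and then between these and the successive returns of $\{h_s \La\}$ to $\Om$ — this uses Theorem~\ref{theorem:main} and the description of $\Om$ as the set of lattices with a horizontal vector of length $\le 1$. (3) Express the $n$-th gap $s_{n+1}-s_n$ in terms of the coordinates $(a_n, b_n) \in \Om$ of the $n$-th return, showing it equals (a constant times) $R(a_n,b_n) = 1/(a_n b_n)$, or more precisely that counting gaps in $(c,d)$ corresponds to counting visits of the $T$-orbit to the region $R^{-1}(c',d')$ for the appropriately scaled $c', d'$. (4) Apply Theorem~\ref{theorem:BCZ:ergodic}: since $\La$ has no vertical vectors, the orbit $(a_0,b_0), T(a_0,b_0), \ldots$ is not periodic (no rational-slope starting point, by Theorem~\ref{theorem:dense:periodic}), and $m$ is the unique ergodic invariant measure not supported on a periodic orbit, so the Birkhoff averages $\frac1N \sum_{n=0}^{N-1} \mathbf{1}_{R^{-1}(c,d)}(T^n(a_0,b_0))$ converge to $m(R^{-1}(c,d))$. (5) Handle the "viewed as a set" subtlety: $|G_{N,t}(\La)|$ counts \emph{distinct} gap values, so I would argue that the number of collisions (pairs $n \ne n'$ with $s_{n+1}-s_n = s_{n'+1}-s_{n'}$) is $o(N)$ for $\La$ without vertical vectors — collisions would force algebraic relations among lattice vectors that, for a lattice with no periodic horocycle orbit, occur with density zero. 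The factor of $2$ in the final answer is the normalization $dm = 2\,da\,db$, matching Corollary~\ref{cor:hall}.

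\textbf{Main obstacle.} The main technical difficulty I anticipate is step (3) together with step (5): getting the precise correspondence between the gap $s_{n+1}-s_n$ and the roof function right — in particular verifying that consecutive slopes correspond to \emph{consecutive} returns to $\Om$ with no slopes "skipped," which requires showing that between two consecutive returns the horocycle orbit produces exactly one new slope in the strip. This is the exact analogue of the Stern--Brocot / Farey neighbor relation $q_i + q_{i+1} > Q$, but in the generic setting one must extract it from the geometry of the $N$-action rather than from arithmetic. The set-versus-sequence issue in step (5) is a genuine (if minor) point that the Farey corollaries do not face, since there one sums over all $i$; here I would need a short argument that repeated gap values do not accumulate enough to affect the limit, using that $R \in L^p(m)$ for $p < 2$ and the ergodicity of $T$ to control the distribution of return times.
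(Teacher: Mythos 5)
Your proposal follows essentially the same route as the paper: translate gaps in slopes into return times of the horocycle orbit via the observation $s_{n+1}-s_n = R(T_t^n(h_{s_1}\La))$ (equation~(\ref{eq:slopegap}) in the paper, which is exactly your step (3)), and then apply the Birkhoff ergodic theorem together with Dani's measure classification to a non-periodic orbit of the BCZ map. The paper packages the first reduction as Lemma~\ref{lem:slope} and works directly with the $t$-BCZ map $T_t$ on $\Om_t$ rather than conjugating down to $t=1$ as you propose in step (1); the two are equivalent via the conjugacy $T_t\circ M_t = M_t\circ T$ established in \S\ref{subsec:selfsim}, but if you take the rescaling route you must track how the gap thresholds $(c,d)$ scale under $g_\tau$, which your sketch leaves vague.

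The one place your plan diverges substantively is step (5), and it is based on a misreading that the paper's own wording invites. Although the paper writes that $G_{N,t}(\La)$ is ``viewed as a set, so $|G_{N,t}(\La)|\le N$,'' Lemma~\ref{lem:slope} identifies $\frac1N |G_{N,t}(\La)\cap(c,d)|$ with the Birkhoff sum $\frac1N\sum_{n=0}^N \chi_{c,d}(T_t^n(\La_1))$, which counts indices $n$ with $s_{n+1}-s_n\in(c,d)$ \emph{with multiplicity}; the intent is clearly a multiset or sequence of gaps, not a genuine set of distinct values. So no ``collisions are $o(N)$'' argument is needed, and your proposed justification for it (that coincidences of $R$-values along a generic orbit have density zero because $R\in L^p$ for $p<2$) is not in fact a proof — two returns $(a_n,b_n)$, $(a_{n'},b_{n'})$ with $a_nb_n = a_{n'}b_{n'}$ is a codimension-one condition that ergodicity alone does not rule out with density zero. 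Drop step (5) and read the count with multiplicity; then your steps (2)--(4) reproduce the paper's proof.
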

\noindent This result will follow from the application of the Birkhoff ergodic theorem to the orbit of $\La$ under the BCZ map, with observable given by the indicator function of the set $R^{-1}(c, d)$. If $\La$ does have a vertical vector, and thus a periodic orbit under the BCZ map, we have that for any $t>0$, there is an $N_0 = N_0(t) >0$ so that $G_{N,t}(\La) = G_{N_0,t} (\La)$ (as sets of numbers) for all $N > N_0$. We can use Theorem~\ref{theorem:equidist:periodic} to obtain the following:
\begin{Cor}\label{cor:periodic:lattice:gap} Let $0 \le c \le d \le \infty$. Then $$\lim_{t \rightarrow \infty} \frac{1}{N_0(t)} | G_{N_0(t), t}(\La) \cap (c, d)| = 2 m(R^{-1}(c,d)).$$
\end{Cor}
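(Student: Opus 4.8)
The plan is to run the mechanism behind Theorem~\ref{theorem:generic:lattice} in the periodic regime, replacing the Birkhoff ergodic theorem by the equidistribution of periodic orbits. Recall from (the proof of) Theorem~\ref{theorem:generic:lattice} that the slope set $S_t(\La)$ is exactly the set of positive return times of $\La$ to the transversal $\Om^{(t)} := g_{2\log t}\,\Om$ (the lattices possessing a horizontal vector of length $\le t$); conjugating by $g_{2\log t}$ identifies the first return map with $T$ and the roof with $t^{-2}R$, so that, writing $\La_t := g_{-2\log t}\La$ and letting $y_0 = y_0(\La,t)\in\Om$ be the point recording the first return, the successive slope gaps are $s_{n+1}-s_n = t^{-2}R(T^n y_0)$. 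Now assume $\La$ has a vertical vector, with shortest vertical vector of length $v$. Then $\La_t$ has a vertical vector of length $vt$, so for $t > 1/v$ it lies outside the excluded set of Theorem~\ref{theorem:main}; its $N$-orbit is the closed horocycle of length $(vt)^2$, hence the $T$-orbit $\{T^n y_0\}_{n\ge 0}$ is periodic, and by Theorem~\ref{theorem:structure:periodic} its period is $P(t) = N(\lfloor vt\rfloor)$, which tends to infinity with $t$. This periodicity of $n\mapsto s_{n+1}-s_n$ is precisely the stabilization of $G_{N,t}(\La)$, and one checks that the minimal stabilization index $N_0(t)$ is comparable to $P(t)$, the exact ratio being dictated by how often the reflection $\gamma\mapsto 1-\gamma$ of the underlying Farey data forces two gap values over a period to coincide; this bookkeeping is where the constant $2$ in the statement is produced, just as in comparing $N(Q)$ with the number of distinct Farey gaps.

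Granting this, $\frac{1}{N_0(t)}\,|G_{N_0(t),t}(\La)\cap(c,d)|$ becomes, once the normalization is accounted for, an average of the indicator $\mathbf 1_{R^{-1}(c,d)}$ over the periodic BCZ orbit $\{T^n y_0 : 0\le n < P(t)\}$ of $\La_t$, weighted by the combinatorial factor above. Since $P(t)\to\infty$, this is governed by the equidistribution of periodic orbits: the proof of Theorem~\ref{theorem:equidist:periodic} — Sarnak's equidistribution of long closed horocycles~\cite{Sarnak} combined with the cross-section of Theorem~\ref{theorem:main}, whose disintegration of Haar measure over $\Om$ is $m$ — applies verbatim to the closed horocycle through $\La_t$ (of length $(vt)^2\to\infty$), not merely to those through the lattices $\La_{1/Q,1}$, and gives that the uniform measure on $\{T^n y_0 : 0\le n < P(t)\}$ converges weak-$^*$ to $m$ as $t\to\infty$. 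Evaluating against the bounded indicator $\mathbf 1_{R^{-1}(c,d)}$ then yields the limit $2\,m(R^{-1}(c,d))$, provided $R^{-1}(c,d)$ is an $m$-continuity set; but its boundary lies on the two hyperbolas $ab = 1/c$, $ab = 1/d$ together with $\partial\Om$, a set of $m$-measure zero, and the degenerate values $c=0$, $d=\infty$, or $d$ equal to the minimum value $1$ of $R$ are handled by monotone approximation.

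The step I expect to be the crux is the identification in the first paragraph: extracting the periodic $T$-orbit attached to $\La_t$ from the slope data and, in particular, pinning down the exact relationship between $N_0(t)$ and the period $P(t)$ — equivalently, the multiplicity with which each gap value is repeated over one period of slopes. Once that orbit is in hand and the combinatorial constant is fixed, the conclusion is an immediate consequence of (the proof of) Theorem~\ref{theorem:equidist:periodic}; the remaining points — that $\La_t$ escapes the excluded set for large $t$, that $R^{-1}(c,d)$ is a continuity set, and the passage to the degenerate $c,d$ — are routine.
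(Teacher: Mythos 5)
Your overall strategy matches the paper's: conjugate by $g_{-2\log t}$ to obtain a periodic BCZ orbit on $\Om$, then invoke equidistribution of long periodic orbits. You are also right, and a bit more careful than the paper, that the required input is Sarnak's theorem applied to the closed horocycle through $g_{-2\log t}\La$ (i.e.\ Theorem~\ref{theorem:sarnak}), since Theorem~\ref{theorem:equidist:periodic} as stated concerns only the Farey orbits $\rho_{Q,I}$.

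The step you yourself flag as the crux --- producing the constant $2$ from a claimed ratio between the stabilization index $N_0(t)$ and the $T$-period $P(t)$, attributed to gap coincidences under the reflection $\gamma\mapsto 1-\gamma$ --- is a genuine gap, and it is not what the paper does. The paper records $N_0(t)=N(\lfloor t\rfloor)$ for $\La=\Z^2$, so $N_0(t)$ \emph{is} the full $T$-period, not roughly half of it. Its proof passes, via Lemma~\ref{lem:slope} and the conjugation $G_{N,t}(\La)=G_{N,1}(g_{-2\log t}\La)$, directly to the Birkhoff-type average $\tfrac1N\sum_{n}\chi_{c,d}(T^n y_0)$ over the periodic orbit, which by equidistribution tends to $\int\chi_{c,d}\,dm=m(R^{-1}(c,d))$ --- no combinatorial factor appears in the argument. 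Your proposed mechanism also points the wrong way: if a reflection symmetry forced gap values to coincide in pairs, the \emph{set} $G_{N_0,t}$ would be roughly half the size of the corresponding multiset, pushing the normalized count toward $\tfrac12\,m(R^{-1}(c,d))$ rather than $2\,m(R^{-1}(c,d))$; and for a general vertically-periodic $\La$ there is no reflection symmetry to invoke at all. In short, the $2$ in the statement reflects the paper's normalization of $m$ in \S\ref{subsec:slopegapstat} rather than any bookkeeping of stabilization indices, and the attempt to conjure it out of periodic-orbit combinatorics is where the proposal goes wrong; the rest of the argument is sound and parallels the paper.
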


\section{Construction of Transversal}\label{sec:transversal} In this section, we prove Theorem~\ref{theorem:main}. We first give an interpretation of the transversal $\Om$ in terms of geometry of numbers in \S\ref{subsec:short}; and prove Theorem~\ref{theorem:main} in \S\ref{subsec:proof:theorem:main}. We also record some observations on slopes in \S\ref{sec:obs:slope}. We show how $\Om$ can be interpreted in terms of hyperbolic geometry in \S\ref{subsec:hyp}. In \S\ref{subsec:selfsim}, we show a certain self-similarity property of the BCZ map $T$.

\subsection{Short horizontal vectors}\label{subsec:short} Fix $t>0$. We say a lattice $\La \in X_2$ is $t$-\emph{horizontally short} if it contains a non-zero horizontal vector $\vv = (a, 0)^T$ so that $|a| \le t$. Note that since $\La$ is a group, we can assume $a > 0$. We will call $1$-horizontally short lattices simply \emph{horizontally short}. We also have the analgous notions of $t$-vertically and vertically short.To prove Theorem~\ref{theorem:main}, we break it up into several lemmas. Our first lemma is:

\begin{lemma}\label{lem:short} $\Om = \{\La_{a,b}: a, b \in (0, 1], a+b>1\}$ is the set of horizontally short lattices. \end{lemma}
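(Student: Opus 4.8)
The plan is to show that $\La_{a,b} = p_{a,b}\Z^2$ is horizontally short precisely when $(a,b)$ lies in the Farey triangle $\Om$, by computing which lattice vectors of $p_{a,b}\Z^2$ can be horizontal. The lattice $p_{a,b}\Z^2$ is generated by the columns of $p_{a,b} = \left(\begin{smallmatrix} a & b \\ 0 & a^{-1}\end{smallmatrix}\right)$, namely $(a,0)^T$ and $(b, a^{-1})^T$. A general lattice vector is $p_{a,b}(m,n)^T = (am + bn, a^{-1} n)^T$ for $(m,n) \in \Z^2$. Such a vector is horizontal and non-zero exactly when $n = 0$ and $m \neq 0$, i.e.\ it is $(am, 0)^T$; so the shortest positive horizontal vector in $p_{a,b}\Z^2$ has length $|a|$ (taking $m = \pm 1$). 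Hence, for $a \in (0,1]$, the lattice $\La_{a,b}$ always contains the horizontal vector $(a,0)^T$ of length $a \le 1$ and is horizontally short.

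**However**, the subtlety is that the identification $gSL(2,\Z) \leftrightarrow g\Z^2$ means $\Om$ is defined as a set of \emph{cosets}, and the coordinates $(a,b)$ are not canonical: $p_{a,b}$ and $p_{a',b'}$ can give the same lattice. So I would next pin down exactly which $(a,b) \in \R^* \times \R$ are needed to represent each horizontally short lattice once. First, note $p_{a,b}\Z^2 = p_{-a,-b}\Z^2$, so we may take $a > 0$. Second, right-multiplying $p_{a,b}$ by $\left(\begin{smallmatrix} 1 & k \\ 0 & 1 \end{smallmatrix}\right) \in SL(2,\Z)$ replaces $b$ by $b + ka$, so $b$ is only well-defined modulo $a$; normalizing into $(0, a]$ — wait, more carefully, one checks the relevant stabilizer moves and shows we can (and must, for a bijective parametrization) take $b \in (0,1]$ as well, and then the condition $a + b > 1$ is exactly what rules out the remaining coincidences (the lattices on the boundary $a+b=1$ or with $b \notin (0,1]$ being already counted). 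This normalization step — showing $(0,1]\times(0,1] \cap \{a+b>1\}$ is a fundamental domain for the residual $SL(2,\Z)$-action on $\{p_{a,b}\}$, equivalently that every horizontally short lattice has a \emph{unique} such representative — is the main obstacle, and is where the hypotheses $a,b \in (0,1]$ and $a+b>1$ earn their keep.

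**Concretely**, given a horizontally short $\La$, I would take its shortest positive horizontal vector $(a_0, 0)^T$ with $a_0 \in (0,1]$; this forces the first basis vector (up to sign). Completing to a basis $\{(a_0,0)^T, \vv\}$ of $\La$ with $\det = 1$ forces $\vv = (b_0, a_0^{-1})^T$ for some $b_0$, well-defined mod $a_0$; choosing the representative of $b_0 \bmod a_0$ that lands in the correct interval and checking $a_0 + b_0 > 1$ (using that $(a_0,0)^T$ is the \emph{shortest} horizontal vector, which prevents $(a_0 - b_0, \ldots)$ or similar from being shorter) yields a point of $\Om$. Conversely every point of $\Om$ is horizontally short by the computation in the first paragraph. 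I expect the only real work is bookkeeping the interval endpoints and the strict inequality, and verifying no horizontally short lattice is missed or double-counted; everything else is the elementary matrix computation above.
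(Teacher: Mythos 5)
Your proposal matches the paper's proof: the inclusion $\Om \subset \{\text{horizontally short}\}$ is the same one-line observation that $(a,0)^T \in \La_{a,b}$ has length $a \le 1$, and the reverse inclusion is obtained, exactly as in the paper, by writing a horizontally short lattice as $p_{a,b'}\Z^2$ and shifting $b'$ modulo $a$ by a unipotent $p_{1,m} \in SL(2,\Z)$ so that the new $b$ lands in $(1-a,1]$, which simultaneously gives $b\in(0,1]$ and $a+b>1$. The paper simply writes down the choice of $m$ explicitly (namely the unique integer with $1-a < ma+b' \le 1$) where you gesture at it more tentatively, and your side remarks about injectivity and the role of the \emph{shortest} horizontal vector are tangential to what the lemma actually claims (set equality, not a bijective parametrization), but the core argument is the same.
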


\begin{proof} Since the horizontal vector $(a,0)^T$ is in $\La_{a,b}$, and $a \le 1$, clearly every lattice in $\Om$ is short. To show the reverse containment, suppose $\La$ is short. Then we can write $\La = \La_{a, b'} = p_{a, b'} \Z^2$, with $0 < a \le 1$, and $b' \neq 0$. Let $m \in \Z$ be such that $$1-a < ma + b' \le 1,$$ i.e., $m = \left\lfloor \frac {1-a}{b'} \right\rfloor$. Set $b = ma + b'$. Then, since $p_{1,m} \in SL(2,\Z)$, $$\La = \La_{a, b'} = p_{a, b'}\Z^2= p_{a, b'} p_{1,m} \Z^2 = \La_{a,b} = \La_{a,b} \in \Om.$$
\end{proof}

\noindent Next, we show that for any $(a,b) \in \Omega$, that there is a $s \in (0, \infty)$ so that $h_s \La_{a,b} \in \Om$, and give a formula for the minimum $s$. 

\begin{lemma}\label{lemma:roof} Let $(a,b) \in \Om$, so $\La_{a,b} \in \Om$. Let $s_0 = R(a,b) = \frac{1}{ab}$. Then $h_{s_0} \La \in \Om$, and for every $0 < s < s_0$, $h_s  \La_{a,b} \notin \Omega$. Furthermore, $$h_{s_0}\La_{a,b}= \La_{T(a,b)}.$$
\end{lemma}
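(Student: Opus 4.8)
The plan is to prove Lemma~\ref{lemma:roof} by direct matrix computation, using Lemma~\ref{lem:short} to identify membership in $\Om$ with horizontal shortness. Given $(a,b) \in \Om$, we have $\La_{a,b} = p_{a,b}\Z^2$, and since horocycle flow acts on the left, $h_s \La_{a,b} = (h_s p_{a,b})\Z^2$. So the first step is to compute the matrix product $h_s p_{a,b} = \left(\begin{smallmatrix} 1 & 0 \\ -s & 1 \end{smallmatrix}\right)\left(\begin{smallmatrix} a & b \\ 0 & a^{-1}\end{smallmatrix}\right) = \left(\begin{smallmatrix} a & b \\ -sa & -sb + a^{-1}\end{smallmatrix}\right)$, and to read off what its columns (and integer combinations thereof) look like. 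The columns are $(a, -sa)^T$ and $(b, a^{-1} - sb)^T$; a general nonzero lattice vector is $(ma + nb, -sma + n(a^{-1} - sb))^T$ for $(m,n) \in \Z^2 \setminus \{0\}$.

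The second step is to determine for which $s > 0$ the lattice $h_s \La_{a,b}$ contains a horizontal vector of length $\le 1$, i.e. a nonzero vector with vanishing second coordinate and first coordinate in $(0,1]$. Setting the second coordinate to zero, $-sma + n(a^{-1} - sb) = 0$, gives $s = \frac{n/a}{ma + nb} = \frac{n}{a(ma+nb)}$ whenever $ma + nb \neq 0$; the corresponding horizontal vector has first coordinate $ma + nb$. For this to be a \emph{positive} time $s$, and for the horizontal vector to have length in $(0,1]$, I would analyze the constraints on $(m,n)$. The key observation is that among all such candidate times, the smallest positive one should come from the choice making $ma + nb$ as large as possible while still $\le 1$ and $s > 0$; using $a, b \in (0,1]$ with $a + b > 1$, one checks that $(m,n) = (0,1)$ gives the horizontal vector $(b,0)^T$ with $b \le 1$ at time $s = \frac{1}{ab}$, and I would verify no smaller positive $s$ yields a horizontal vector of length $\le 1$ — this amounts to checking that for $0 < s < \frac{1}{ab}$ no integer combination produces a second coordinate zero with first coordinate in $(0,1]$. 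This is the step requiring the most care, since one must rule out all $(m,n)$, not just a convenient few; the constraint $a + b > 1$ is exactly what makes it work, and I expect this case analysis (bounding $ma+nb$ from $s < \frac1{ab}$ and the relations among $a,b$) to be the main obstacle.

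The third step is to identify $h_{s_0}\La_{a,b}$ with $\La_{T(a,b)}$ explicitly. At $s_0 = \frac{1}{ab}$, substitute into $h_{s_0}p_{a,b}$ to get $\left(\begin{smallmatrix} a & b \\ -1/b & 0 \end{smallmatrix}\right)$. This is not yet in the form $p_{a',b'}$ (upper triangular with positive diagonal), so I would right-multiply by an appropriate element of $SL(2,\Z)$ to bring it into that shape: the lattice $h_{s_0}\La_{a,b}$ is horizontally short with horizontal vector $(b,0)^T$, so its $P$-representative has upper-left entry $b$. Writing $h_{s_0}p_{a,b} \cdot \gamma = p_{b, \beta}$ for suitable $\gamma \in SL(2,\Z)$ and solving, one finds $\gamma = \left(\begin{smallmatrix} 0 & -1 \\ 1 & k \end{smallmatrix}\right)$ works for the integer $k$ that forces $\beta \in (0,1]$ with $b + \beta > 1$; tracking $\beta$ through the multiplication gives $\beta = -a + kb$, and the normalization condition (as in the proof of Lemma~\ref{lem:short}, choosing $k = \lfloor \frac{1+a}{b}\rfloor$ so that $1 - b < -a + kb \le 1$) yields exactly $\beta = -a + \lfloor\frac{1+a}{b}\rfloor b$. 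Hence $h_{s_0}\La_{a,b} = \La_{(b,\, -a+\lfloor(1+a)/b\rfloor b)} = \La_{T(a,b)}$, which is the claimed formula~(\ref{eq:bcz}). Finally I would double-check that this $(b,\beta)$ genuinely lies in $\Om$, i.e. $b, \beta \in (0,1]$ and $b + \beta > 1$, closing the argument; most of this reduces to the same floor-function bookkeeping already used in Lemma~\ref{lem:short}.
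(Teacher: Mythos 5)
Your plan is essentially the paper's own proof: compute $h_s p_{a,b}$, observe that the second column becomes horizontal at $s_0 = 1/(ab)$, then right-multiply by $\gamma = \left(\begin{smallmatrix}0 & -1 \\ 1 & k\end{smallmatrix}\right) \in SL(2,\Z)$ with $k = \lfloor (1+a)/b\rfloor$ to bring $h_{s_0}p_{a,b}$ into the normal form $p_{b,\,-a+kb}$ and read off $T(a,b)$. Your step 3 matches the paper's equation~(\ref{eq:cocycle}) exactly, and the floor-function bookkeeping you cite does confirm $(b,\beta)\in\Om$.

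The one place you leave a genuine to-do is step 2, the verification that no $0<s<s_0$ produces a horizontal vector of length $\le 1$; you correctly flag this as where $a+b>1$ must enter, but don't carry it out. (The paper is in fact even terser here, asserting $s_0$ is the first such time without argument, so you are being more careful than the source.) The case analysis you anticipate does close, and cleanly: a lattice vector $m(a,-sa)^T + n(b,-sb+a^{-1})^T$ becomes horizontal at $s(m,n) = \frac{n}{a(ma+nb)}$; taking $n\ge 1$ without loss, $s(m,n)>0$ forces $ma+nb>0$, and $s(m,n)<s_0$ is equivalent to $nb<ma+nb$, i.e.\ $m\ge 1$, whence $ma+nb \ge a+b > 1$ and the horizontal vector is not short. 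So only $m\le 0$ can give a short return, and those all satisfy $s(m,n)\ge s_0$ with equality only at $(m,n)=(0,1)$. Filling in this three-line argument would complete the proof along precisely the lines you sketch.
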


\begin{proof} Since $\Om$ consists of horizontally short lattices, we first note that the first time $h_s \La_{a,b}$ will have a horizontal vector is given by the equation $$-sb + \frac{1}{a} = 0.$$ Thus we set $s_0 = \frac{1}{ab}$, and a direct calculation shows $$ h_{s_0} p_{a,b} =  \left(\begin{array}{cc}a & b \\-\frac{1}{b}& 0\end{array}\right).$$ Let $\kappa(a,b) = \lfloor \frac{1+a}{b} \rfloor$. Applying the matrix $$ \left(\begin{array}{cc}0 & -1 \\1 & \kappa(a,b)\end{array}\right) =\left( A(a,b)^{-1}\right)^T$$ we obtain \begin{equation}\label{eq:cocycle}h_{s_0} p_{a,b}\left( A(a,b)^{-1}\right)^T= \left(\begin{array}{cc}b & -a + \kappa(a,b) b \\0 & b^{-1}\end{array}\right)= p_{T(a,b)}.\end{equation} Thus, $h_{s_0}\La_{a,b} = \La_{T(a,b)}$, as desired.
\end{proof}

\medskip

\noindent Finally, we show that for any lattice $\La \in X_2$ which is not vertically short, the orbit under $\{h_s\}_{s \geq 0}$ will intersect $\Om$.

\begin{lemma}\label{lemma:poincare} Let $\La \in X_2$ so that $\La$ is not vertically short. Then there is a $s_1 \in \R$ so that $h_{s_1} \La \in \Om$.
\end{lemma}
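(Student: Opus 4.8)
The plan is to show that any lattice $\La$ which is not vertically short can be translated by some $h_{s_1}$ into $\Om$, using Lemma~\ref{lem:short}, which identifies $\Om$ with the set of horizontally short lattices. So it suffices to produce a time $s_1$ at which $h_{s_1}\La$ acquires a non-zero horizontal vector of length $\le 1$. First I would observe that $h_s$ acts on $\R^2$ by $(x,y)^T \mapsto (x, -sx + y)^T$, so it shears vectors horizontally — wait, actually with the convention $h_s = \begin{pmatrix}1 & 0\\-s & 1\end{pmatrix}$ it fixes the $x$-coordinate and changes the $y$-coordinate by $-sx$. Thus a vector $\vv = (x,y)^T \in \La$ with $x \neq 0$ becomes horizontal exactly when $s = y/x$, and its length is then $|x|$. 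So the question reduces to: does $\La$ contain a non-zero vector $\vv = (x,y)^T$ with $0 < |x| \le 1$? If so, pick $s_1 = y/x$ and $h_{s_1}\La$ is horizontally short, hence in $\Om$ by Lemma~\ref{lem:short}.

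The key step is therefore to argue that a lattice with no vertical vector of length $\le 1$ must contain a non-zero vector with first coordinate of absolute value in $(0,1]$. I would argue by contradiction using Minkowski's convex body theorem (or a direct covolume argument). Suppose $\La$ contains no non-zero vector $(x,y)^T$ with $0 < |x| \le 1$. Then every non-zero vector of $\La$ either has $x = 0$ (a vertical vector) or $|x| > 1$. Consider the vertical strip $\{(x,y) : |x| \le 1\}$. By hypothesis $\La$ is not vertically short, so the only lattice points in this strip with $|x|\le 1$ that could occur are those with $x=0$ and $|y|>1$, together with the origin — but in fact if $\La$ had any vertical vector at all it would be one of length $>1$. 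One then applies Minkowski's theorem to a suitably chosen symmetric convex region inside this strip: for any $h>0$, the rectangle $R_h = \{(x,y): |x| \le 1, |y| \le h\}$ has area $4h$; taking $h>1$ makes the area exceed $4 = 4\,\mathrm{covol}(\La)$, forcing a non-zero lattice point in $R_h$. Such a point has $|x|\le 1$; if $|x|>0$ we are done, and if $x = 0$ then it is a vertical vector, which (being non-zero) would make $\La$ vertically short provided we can also arrange $|y|\le 1$ — so the more careful move is to handle the vertical and non-vertical cases together: either $\La$ has a non-zero horizontal-izable vector (first coordinate in $(0,1]$), giving the conclusion, or all non-zero vectors with $|x|\le 1$ are vertical, and then $\La$ is a lattice supported (among short vectors) on the $y$-axis, from which a covolume computation shows it must contain a short vertical vector, contradicting the hypothesis.

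Concretely, I would phrase it as follows: let $\vv = (x_0, y_0)^T$ be a shortest non-zero vector of $\La$ in the sup-norm lying in the closed strip $|x| \le 1$; such a vector exists since the strip contains infinitely many lattice points (the intersection of $\La$ with the strip is a union of translates of a rank-one sublattice, or all of $\La$ if $\La \subset$ strip is impossible as $\La$ is a lattice) — more cleanly, $\La \cap \{|x|\le 1\}$ is infinite because it contains the full rank-one group generated by whichever primitive vector of $\La$ minimizes $|x|$ among non-zero vectors, intersected with the strip. If that minimal $|x|$ value is $0$, then $\La$ has a vertical vector; its shortest vertical vector $\uu = (0, c)^T$ satisfies $|c| \le 1$ since otherwise the rectangle $\{|x| < 1/|c|,\ |y|\le |c|\}$ — wait, I should instead directly compute: the sublattice structure forces a vertical vector of length $\le \mathrm{covol}(\La)/1 = 1$ once all other generators can be taken with $|x|\le 1$; but if no non-vertical vector has $|x|\le 1$ then the projection of $\La$ to the $x$-axis is a subgroup with smallest positive element $>1$, forcing the covolume in the $x$-direction to be $>1$, hence the vertical period $\le 1$ — contradiction with not vertically short. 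Otherwise the minimal $|x|$ is some $\xi$ with $0 < \xi \le 1$ (it is $\le 1$ because if it were $>1$ the projection argument again gives a vertical vector of length $<1$), and the corresponding vector $\vv = (x_0,y_0)^T$ with $|x_0| = \xi$ does the job: set $s_1 = y_0/x_0$, so $h_{s_1}\vv = (x_0, 0)^T$ with $0 < |x_0| \le 1$, hence $h_{s_1}\La$ is horizontally short and lies in $\Om$ by Lemma~\ref{lem:short}.

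The main obstacle is the bookkeeping in the dichotomy between vertical and non-vertical short vectors: one must be careful that ``not vertically short'' ($\La$ has no vertical vector of length $\le 1$) genuinely forces the existence of a vector with first coordinate in $(0,1]$, and the cleanest route is the projection-to-$x$-axis argument combined with $\mathrm{covol}(\La) = 1$. I expect that once this geometry-of-numbers lemma is isolated, the rest (choosing $s_1 = y_0/x_0$ and invoking Lemma~\ref{lem:short}) is immediate, and combined with Lemmas~\ref{lemma:roof} and the discreteness of return times, completes the proof of Theorem~\ref{theorem:main}.
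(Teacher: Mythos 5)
Your proof is correct, but it takes a more roundabout route than the paper's, and the reason is an unnecessary safety margin in your choice of Minkowski body. The paper applies Minkowski's convex body theorem once to the closed unit square $S=[-1,1]^2$ (compact, convex, centrally symmetric, area exactly $4 = 2^2\operatorname{covol}(\La)$, which suffices for Minkowski). The resulting nonzero vector $\vv=(a,b)^T$ then satisfies \emph{both} $|a|\le 1$ and $|b|\le 1$; if it were vertical the bound $|b|\le 1$ would immediately make $\La$ vertically short, contradicting the hypothesis. So $a\neq 0$, and setting $s_1=b/a$ finishes the proof in one stroke.

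You instead take the rectangle $R_h$ with $h>1$ to push the area strictly above $4$, which means the resulting lattice point no longer has $|y|\le 1$, and the not-vertically-short hypothesis cannot be invoked directly when $x=0$. You correctly notice this and fall back on the projection-to-$x$-axis argument, which does close the gap (split on whether $\La$ has a vertical vector: if not, the $x$-projection is dense and small nonzero $|x|$ is available; if so, $\pi(\La)=\xi\Z$ with $\xi\cdot c = 1$ for $c>1$ the shortest vertical length, so $\xi<1$). That argument is sound, though your phrasing conflates infimum and minimum of $|x|$ in the case of no vertical vector (the infimum is $0$ and not attained), and your exploratory detours make the case analysis harder to follow than it needs to be. The lesson is structural: choosing the Minkowski body to control \emph{both} coordinates makes the not-vertically-short hypothesis do all the work, eliminating the dichotomy entirely.
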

\begin{proof} Let $S = [-1, 1]^2 \subset \R^2$. $S$ is a square centered at $0$, so is convex, centrally symmetric, and has area $4$. Thus by the Minkowski convex body theorem, any lattice $\La$ must have a nonzero vector $\vv \in S$. Since $\La$ is not vertically short, we can assume that $\vv = (a, b)^T$ is not vertical, that is, $a \neq 0$. Further, we can assume that $a \geq 0$, otherwise we multiply by $-1$. Let $s_1 = \frac{b}{a}$. Then $(a, 0)^T \in h_{s_1}\La,$ so $h_{s_1} \La \in \Om$.

\end{proof}

\subsubsection{Proof of Theorem~\ref{theorem:main}}\label{subsec:proof:theorem:main} To prove Theorem~\ref{theorem:main}, we combine the above lemmas. Lemma~\ref{lemma:poincare} guarantees that all non-vertically short lattice horocycle orbits $\{h_s \La\}_{s \in \R}$ intersect $\Om$, and Lemma~\ref{lemma:roof} shows that if an $h_s$-orbit intersects $\Om$ once, it must intersect $\Om$ infinitely often (both forward and backward in time). To see that the set of intersection times is discrete, observe that the roof function $R(a,b)= \frac{1}{ab}$ is bounded below by $1$ on $\Om$, so visits must be spread out at least time $1$ apart. The calculation of the roof function $R$ and the return map $T$ are also given by Lemma~\ref{lemma:roof}. 

Finally, we address the remark following Theorem~\ref{theorem:main}. A direct calculation shows that if a lattice $\La$ has a vertical vector, then it is periodic under $h_s$ with period $t^2$, where $t$ is the length of the shortest vertical vector. If $\La$ is vertically short, this means that it is periodic under $h_s$ of period $\le 1$, so the $h_s$-orbit of $\La$ cannot intersect $\Om$. We will see in \S\ref{subsec:hyp} below that these orbits correspond to embedded closed horocycles, the family of which foliates the cusp of $\h^2/SL(2, \Z)$.\qed\medskip

\subsubsection{Observations on slopes}\label{sec:obs:slope} Given a unimodular lattice $\La$, let $$\{0 \le s_1 < s_2 < \ldots < s_N \ldots \}$$ denote the sequence of slopes of vectors in $\La \cap V_1$, where $$V_1 = \{ 0 < x \le 1, y>0\} \subset \R^2$$ is the vertical strip as in \S\ref{subsubsec:geomnumbers}. Then we see that $s_1$ is in fact the first hitting time of $\Omega$ of the positive orbit $h_s \La$, and the $s_n$ are the subsequent hitting times, since $\Om$ is the set of horizontally short lattices. The slopes of vectors decrease by $s$ under the horocycle flow $h_s$ (in particular, while $h_s$ does not preserve slopes, it preserves \emph{differences of slopes}), and the BCZ map records them when they become horizontal, while also keeping track of their horizontal component (the coordinate $a$) and the \emph{next} vector in the strip to become horizontal (the vector $(b, a^{-1})^T$). The differences in slopes are the return times $R$, that is, for $n \geq 1$, \begin{equation}\label{eq:slopegap} s_{n+1} -s_n = R(T^{n} (h_{s_1} \La)).\end{equation} This observation will be crucial for the proofs our results on the geometry of numbers, see \S\ref{sec:geomnum}.

\subsection{Hyperbolic geometry}\label{subsec:hyp} We recall that the there is a natural identification of the the upper-half plane $$\h^2 = \{z = x+iy: y >0\}.$$ with the space of unimodular lattices with a horizontal vector, via $$z \mapsto \frac{1}{\sqrt y} \left(\begin{array}{cc}1 & x \\0 & y\end{array}\right)\Z^2 = \La_{\frac{1}{\sqrt y}, \frac{x}{ \sqrt y}}.$$ That is, we identify $z$ with the unimodular lattice homothetic to the lattice generated by $1$ and $z$. This extends to an identification of all unimodular lattices with the unit-tangent bundle $T^1\h^2$, where the point $(z, i)$ (where $i$ denotes the upward pointing tangent vector) is identified to the lattice with a horizontal vector, and the point $(z, e^{i\theta} i)$ is identified to the lattice rotated by angle $\theta$. This identification is well-defined up to the action of $SL(2, \Z)$ by isometries on $\h^2$. In the standard fundamental domain for $SL(2, \Z)$, $$\left\{z=x+iy \in \h^2: |z| > 1, |x| < \frac 1 2\right\},$$ the set of horizontally short lattices can be identified with the strip $$C_1 : =\left\{z=x+iy \in \h^2: y > 1, |x| < \frac 1 2\right\},$$ together with their upward pointing tangent vectors, see Figure~\ref{fig:hyp}.  


\begin{figure}\caption{A hyperbolic picture of $\Om \equiv C_1$. The \textcolor{green}{green} circle is an $h_s$-orbit. Vertical lines are $g_t$-orbits and horizontal lines are $u_s$-orbits.\medskip}\label{fig:hyp}
\begin{tikzpicture}[scale=1.8]
   \draw(-2,0)--(2,0);
   \draw[dashed](180:1)node[below]{\tiny $-1$} arc (180:120:1);
   \draw[dashed](60:1) arc (60:0:1)node[below]{\tiny $1$};
   \draw(1/2, 3)--(60:1) arc (60:120:1)--(-1/2, 3);
   \draw[dashed] (1/2, 0)node[below]{\tiny $\frac 1 2$}--(60:1);
      \draw[dashed] (-1/2, 0)node[below]{\tiny -$\frac 1 2$}--(120:1);
\filldraw[fill=red!20!white] (-1/2, 3)--(-1/2,1)--(1/2,1)--(1/2, 3)--cycle;
\path(0, 2)node[blue]{$\Om$};
 \foreach \x in {-4, -3, -2, -1, 0, 1, 2, 3, 4}\draw[->](\x/10,1)--(\x/10,1.1);
  \foreach \x in {-4, -3, -2, -1, 0, 1, 2, 3, 4}\draw[->](\x/10,1.2)--(\x/10,1.3);
    \foreach \x in {-4, -3, -2, -1, 0, 1, 2, 3, 4}\draw[->](\x/10,1.4)--(\x/10,1.5);
        \foreach \x in {-4, -3, -2, -1, 0, 1, 2, 3, 4}\draw[->](\x/10,1.6)--(\x/10,1.7);
                \foreach \x in {-4, -3, -2, -1, 0, 1, 2, 3, 4}\draw[->](\x/10,1.8)--(\x/10,1.9);
                \foreach \x in {-4, -3, -2, -1, 0, 1, 2, 3, 4}\draw[->](\x/10,2.1)--(\x/10,2.2);
                \foreach \x in {-4, -3, -2, -1, 0, 1, 2, 3, 4}\draw[->](\x/10,2.3)--(\x/10,2.4);
                                \foreach \x in {-4, -3, -2, -1, 0, 1, 2, 3, 4}\draw[->](\x/10,2.5)--(\x/10,2.6);
                                                \foreach \x in {-4, -3, -2, -1, 0, 1, 2, 3, 4}\draw[->](\x/10,2.7)--(\x/10,2.8);
                                                \foreach \x in {-4, -3, -2, -1, 0, 1, 2, 3, 4}\draw[->](\x/10,2.9)--(\x/10,3);

\draw[->,green](0, 1.6)--(0, 1.7);
\draw[dashed, green]  (0,.8) circle (.8);

 \foreach \x in {15, 30, 45, 60, 75, 90, 105, 120, 135, 150, 165, 180, 195, 210, 225, 240, 255, 285, 300, 315, 330, 345, 360}\draw[yshift=0.8cm, ->, green](\x:.8)--(\x:.9);

\end{tikzpicture}
\end{figure}
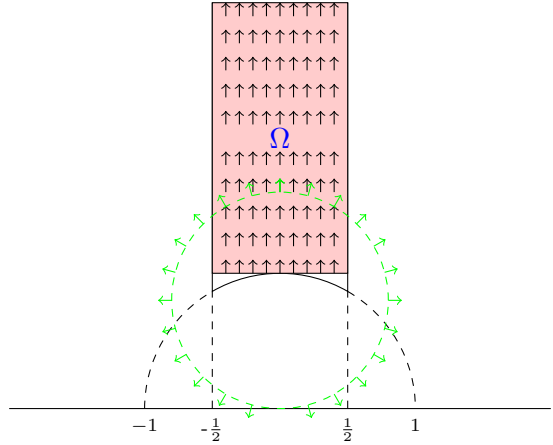

\subsubsection{Geodesics and horocycles} The orbits of the one-parameter subgroups $$A: = \left\{g_t = \left(\begin{array}{cc}e^{t/2} & 0 \\0 & e^{-t/2}\end{array}\right): t \in \R \right\}$$ and $$U: = \left\{u_s= \left(\begin{array}{cc}1 & s\\0 & 1\end{array}\right): t \in \R \right\},$$ the \emph{geodesic} and \emph{opposite horocycle} flows respectively, also have nice interpretations in terms of hyperbolic geometry. Under our choices, the action of $\{g_t: t \le 0\}$ moves the upward pointing tangent vectors vertically upward in $\h^2$, and the flow $\{u_s\}$ moves them horizontally. In particular, $\Om$ is preserved by the action of $\{g_t: t \le 0\}$ and $\{u_s\}$. In dynamical language, orbits of $h_s$ are leaves of the \emph{stable foliation} for $\{g_t: t \le 0\}$, and $\{u_s\}$ are leaves of the \emph{strong unstable foliation}. We are constructing a cross section of $h_s$ by taking a piece of the \emph{unstable foliation} $AU$. That is, $\Om \subset AU$. 

In the Euclidean picture, direct calculations show that $$g_t \La_{a, b} = \La_{e^{-t/2} a, e^{-t/2} b} \mbox{ and } u_s \La_{a, b} = \La_{a, b+sa^{-1}},$$ so geodesic orbits are radial lines and opposite horocyclic orbits are vertical lines, see Figure~\ref{fig:eucl}. We will use these observations in crucial ways in \S\ref{subsec:selfsim} below.

\begin{figure}\caption{\textcolor{red}{Geodesic} and \textcolor{blue}{opposite horocyclic} orbits in the Euclidean picture of $\Om$.\medskip}\label{fig:eucl}
\begin{tikzpicture}[scale=4]
\draw  (0, 0) -- (0,1.5);
\filldraw[fill=black!20!white] (1,0)--(1,1)--(0,1)--cycle;
\draw (0,0)--(1.5,0);

\draw[red](2/3, 1/3)--(1, 1/2)node[right]{\tiny $g_t$};
\draw[dashed, red](0, 0)--(2/3, 1/3);
\draw[blue](1/3, 2/3)--(1/3, 1)node[above]{\tiny $u_s$};

\path(2/3, 2/3)node{$\Om$};

\end{tikzpicture}
\end{figure}
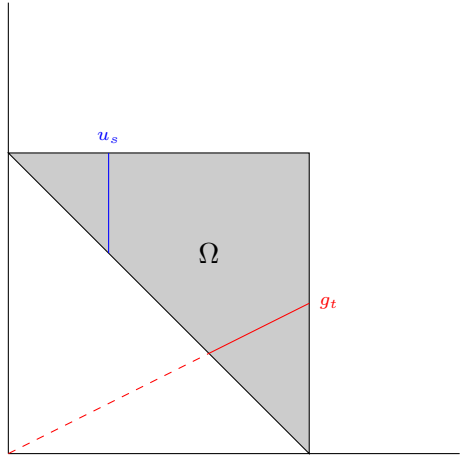

\subsubsection{Identifications}\label{subsubsec:ident} Note that in the fundamental domain $$\{z = x+iy: |z| \geq 1, |x| \le \frac 1 2 \}$$ for $SL(2, \Z)$ acting on $\h^2$, there is a natural identification of the vertical sides $x = \frac 1 2$ and $x = - \frac 1 2$ via the transformation $$z \mapsto z+1,$$ which is a fractional linear transformation associated to the unipotent matrix $$\left(\begin{array}{cc}1 & 1 \\0 & 1\end{array}\right).$$ In the Euclidean picture, we can also identify two boundaries of the region $\Om$ via the above unipotent matrix, namely the lines $\{x+y = 1: 0 < x \le 1\}$ and $\{y=1: 0 < x \le 1\}$. The resulting loop formed by the vertical segment $$\{x=1: 0<y \le 1\}$$ corresponds to the closed horocycle $$\{z=1+iy: 0 \le y \le 1\}$$ in the hyperbolic picture. Thus, the topology of $\Om$ is that of a sphere with one puncture (corresponding to the point at $\infty$ in the hyperbolic picture and the point $(0, 1)$ in the Euclidean picture) and one boundary component (corresponding to the loop described above).

\subsection{Self-similarity}\label{subsec:selfsim} The BCZ map has an extraordinary self-similarity property which can be seen naturally in both the hyperbolic and Euclidean pictures. Let $t>0$, and let $\Om_t$ denote the set of $t$-horizontally short lattices. Arguing as in Lemma~\ref{lem:short}, we have the identification \begin{equation}\label{eq:t:transversal} \Om_t : = \{ (a,b) \in (0, t]: a+b >t \}. \end{equation} Appropriately modifying Lemma~\ref{lemma:poincare}, we can define a return map (the $t$-\emph{BCZ map}) $$T_t: \Om_t \rightarrow \Om_t,$$ which captures the $h_s$-orbits of all lattices except those of $\frac{1}{t}$-vertically short lattices. Modifying the argument in Lemma~\ref{lemma:roof}, we have \begin{equation}\label{eq:tbcz} T_t(x, y) = \left(y, -x + \left \lfloor \frac{t+x}{y} \right\rfloor\right).\end{equation}
A direct calculation shows that the $t$-BCZ map is conjugate to the original ($1$-) BCZ map via the linear transformation $M_t: \Om \rightarrow \Om_t$ given by $$M_t(a,b) = (ta, tb),$$ that is \begin{equation}\label{eq:conj:discrete} T_t \circ  M_t = M_t \circ T.\end{equation} Now assume $t<1$ (if $t>1$, the roles of $t$ and $1$ below should be reversed). Then the set of $t$-horizontally short lattices can also be identified with the subset $\Om^{(t)} \subset \Om$ given by $$\Om^{(t)} : = \{ (x,y) \in \Om: x < t\}.$$ Let $T^{(t)}: \Om^{(t)} \rightarrow \Om^{(t)}$ be the first return map of the BCZ map $T$ to $\Om^{(t)}$. Thus, $T^{(t)}$ also represents the first return map of $h_s$ to the set of $t$-horizontally short lattices, and so $T^{(t)}$ and $T_t$ are conjugate. This conjugacy can be made explicit via the map $L_t:  \Om^{(t)} \rightarrow \Om_t$ given by $$ L_t(a,b) = (a, b-(1-t)),$$ that is, $$T_t \circ L_t = L_t \circ T^{(t)}.$$ In the hyperbolic picture, the set of $t$-horizontally short lattices can be identified with the subset $$C_t : = \left\{z=x+iy \in \h^2: y > t^{-2}, |x| < \frac 1 2\right\}.$$ Figures~\ref{fig:tbcz} and~\ref{fig:tbcz:hyp} show the Euclidean and hyperbolic pictures respectively. We can summarize this discussion with the following:
\begin{obs} $T$ is self-similar in the following sense: For any $0 < t<1$, the BCZ map $T$ is conjugate to its own first-return map $T^{(t)}$. \end{obs}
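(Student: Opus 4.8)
\medskip
\noindent\emph{Proof sketch.}
The plan is to compose the two conjugacies already recorded above. We have the linear bijection $M_t\colon\Om\to\Om_t$, $M_t(a,b)=(ta,tb)$, satisfying $T_t\circ M_t=M_t\circ T$ (this is \eqref{eq:conj:discrete}), and the affine bijection $L_t\colon\Om^{(t)}\to\Om_t$, $L_t(a,b)=(a,b-(1-t))$, satisfying $T_t\circ L_t=L_t\circ T^{(t)}$. Define
\[
\phi := L_t^{-1}\circ M_t \colon \Om \to \Om^{(t)}, \qquad \phi(a,b)=(ta,\; tb+1-t),
\]
an affine bijection whose inverse is $(x,y)\mapsto(x/t,\;1+(y-1)/t)$. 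The claim is that $\phi$ conjugates $T$ to $T^{(t)}$.

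First I would confirm that $\phi$ actually carries $\Om$ onto $\Om^{(t)}$, since this is what legitimizes the composition. This is a one-line check with the defining inequalities: if $a,b\in(0,1]$ and $a+b>1$, then $ta\in(0,t]$, $tb+1-t\in(1-t,1]$, and $ta+(tb+1-t)=t(a+b-1)+1>1$, so $\phi(a,b)\in\Om$ and has first coordinate $\le t$, i.e.\ lies in $\Om^{(t)}$; surjectivity is the same computation run in reverse. On the measure-zero boundaries of the transversals the two coordinate presentations of a $t$-horizontally short lattice differ by the $SL(2,\Z)$-reduction used in the proof of Lemma~\ref{lem:short}, so there is no ambiguity there.

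The conjugacy is then purely formal. From $T_t\circ L_t=L_t\circ T^{(t)}$ we obtain $L_t^{-1}\circ T_t=T^{(t)}\circ L_t^{-1}$, and hence
\[
T^{(t)}\circ\phi = T^{(t)}\circ L_t^{-1}\circ M_t = L_t^{-1}\circ T_t\circ M_t = L_t^{-1}\circ M_t\circ T = \phi\circ T,
\]
where the third equality is \eqref{eq:conj:discrete}. Thus $T$ and $T^{(t)}$ are conjugate via the (globally affine, orientation-preserving) homeomorphism $\phi$, which is exactly the assertion of the Observation.

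Honestly there is no real obstacle in this final step: all of the substance has already gone into the two relations being composed. The only point that demands a little care is the bookkeeping on the boundaries of $\Om$, $\Om_t$ and $\Om^{(t)}$, where a single $t$-horizontally short lattice admits two legitimate coordinate names; this is settled exactly as in Lemma~\ref{lem:short} via $SL(2,\Z)$-reduction and affects none of the interior dynamics. If one preferred to avoid mentioning $T_t$ at all, one could instead argue directly that $T^{(t)}$ is the first-return map of $h_s$ to the set of $t$-horizontally short lattices (a first return of a first return is a first return), note that rescaling by $t$ carries that set onto $\Om$ and intertwines the horocycle flow up to the time change $s\mapsto t^{-2}s$, and read off the same affine conjugacy $\phi$ directly; the explicit formula \eqref{eq:tbcz} then serves only as a consistency check.
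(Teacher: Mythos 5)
Your proof is correct and takes the same route as the paper: the paper establishes the two conjugacies $T_t\circ M_t=M_t\circ T$ and $T_t\circ L_t=L_t\circ T^{(t)}$ and then states the Observation as a summary, with the composition $\phi=L_t^{-1}\circ M_t$ left implicit; you simply make that composition explicit and verify the (measure-zero) boundary bookkeeping. The alternative $h_s$-based argument you sketch at the end is also exactly the reasoning the paper uses to justify that $T^{(t)}$ and $T_t$ are conjugate in the first place.
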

\medskip
\noindent Essentially, this self-similarity is a consequence of the following conjugation relation for $g_t$ and $h_s$ \begin{equation}\label{eq:conj} g_t h_s g_{-t} = h_{se^{-t}}, \end{equation} which in particular implies that the time-$1$ map $h_1$ and time-$s$ map $h_s$ are conjugate for any $s >0$ (via the map $g_{\log s}$).

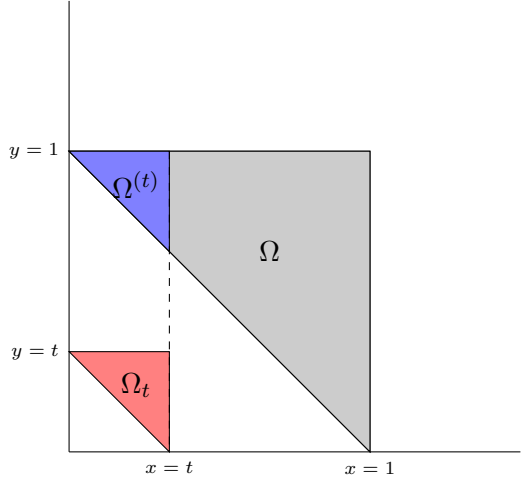
\begin{figure}\caption{\textcolor{red}{$\Omega_t$} and \textcolor{blue}{$\Omega^{(t)}$}. $\Omega$ can be identified with \textcolor{red}{$\Omega_t$}  via the scaling $M_t$, and \textcolor{red}{$\Omega_t$}  and \textcolor{blue}{$\Omega^{(t)}$} via the vertical translation $L_t$.\medskip}\label{fig:tbcz}
\begin{tikzpicture}[scale=4]
\draw  (0, 0) -- (0,1.5);
\filldraw[fill=black!20!white] (1,0)node[below]{\tiny $x=1$}--(1,1)--(0,1)node[left]{\tiny $y=1$}--cycle;
\draw (0,0)--(1.5,0);

\draw[blue](1/3, 2/3)--(1/3, 1);

\filldraw[fill=red!50!white] (1/3,0)--(1/3,1/3)--(0,1/3)node[left]{\tiny $y=t$}--cycle;

\filldraw[fill=black!20!white] (1,0)--(1,1)--(0,1)--cycle;

\path(2/3, 2/3)node{$\Om$};

\filldraw[fill=blue!50!white] (0,1)--(1/3,2/3)--(1/3,1)--cycle;
\draw[dashed](1/3, 1)--(1/3, 0)node[below]{\tiny $x =t$};

\path(2/9, 2/9)node{$\Om_t$};

\path(2/9, 8/9)node{$\Om^{(t)}$};

\end{tikzpicture}
\end{figure}

\begin{figure}\caption{A hyperbolic picture of \textcolor{red}{$\Om_t $} $\subset \Om$, that is, $C_t \subset C_1$.\medskip}\label{fig:tbcz:hyp}
\begin{tikzpicture}[scale=1.8]
   \draw(-2,0)--(2,0);
   \draw[dashed](180:1)node[below]{\tiny $-1$} arc (180:120:1);
   \draw[dashed](60:1) arc (60:0:1)node[below]{\tiny $1$};
   \draw(1/2, 3)--(60:1) arc (60:120:1)--(-1/2, 3);
   \draw[dashed] (1/2, 0)node[below]{\tiny $\frac 1 2$}--(60:1);
      \draw[dashed] (-1/2, 0)node[below]{\tiny -$\frac 1 2$}--(120:1);
\filldraw[fill=black!20!white] (-1/2, 3)--(-1/2,1)--(1/2,1)--(1/2, 3)--cycle;
\path(0, 1.2)node{$\Om$};

\filldraw[fill=red!50!white] (-1/2, 3)--(-1/2,1.5)--(1/2,1.5)node[right]{\tiny $y = t^{-2}$}--(1/2, 3)--cycle;
\path(0, 2.2)node[red]{$\Om_t$};

\end{tikzpicture}
\end{figure}
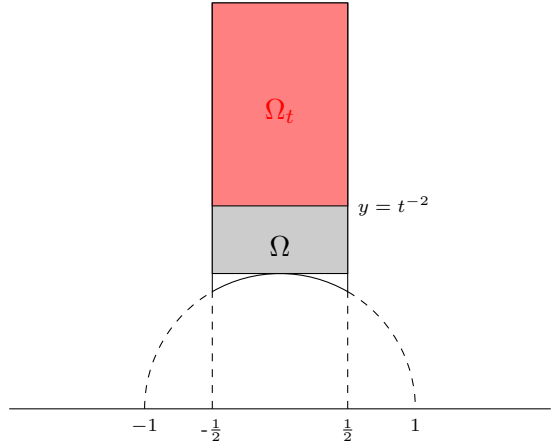

\section{Ergodic properties of the BCZ map}\label{sec:ergbcz} \noindent In this section, we prove Theorem~\ref{theorem:BCZ:ergodic}, using Theorem~\ref{theorem:main} and well-known properties of suspension flows. We first recall these properties, which can be found, e.g., in~\cite{Nadkarni}.

\subsection{Suspension Flows}\label{subsec:suspension}  Let $(X, \mu)$ be a measure space, and $T: X \rightarrow X$ a measure-preserving bijection. Let $R: X \rightarrow \R^+$ be in $L^1(X, \mu)$. The \emph{suspension flow} over $T$ with roof function $R$ is defined on the space \begin{equation}\label{eq:suspension:space} X^R : = \{ (x, t): x \in X, t \in [0, R(x))\}/\sim \end{equation} where $(x, R(x)) \sim (T(x), 0)$. The \emph{suspension flow} $\phi^{R, T}$ is given by $$\phi^{R, T}_{s}(x, t) = (x, s+t).$$ The measure $d\nu_{R, T} = \frac{1}{\|R\|_1} d\mu dt$ is a $\phi^{R,T}$-invariant probability measure on $X^R$. It is ergodic if and only if $\mu$ is $T$-ergodic.The natural dual construction to suspension flow is the construction of a \emph{first return map for a flow}. Given a measure-preserving flow $\phi: (Y, \nu) \rightarrow (Y, \nu)$, and a subset $X \subset Y$ so that for almost all $y \in Y$, $\{t \in \R: \phi^t y \in X \}$ is discrete, we define the \emph{first return map} $T: X \rightarrow X$ by $T(x) = \phi^{R(x)} (x)$, where $$R(x) = \min\{t >0: \phi^t(x) \in X\}$$ is the \emph{first return map}. There is a natural (possibly infinite) $T$-invariant measure $\mu$ on $X$ so that $R \in L^1(X, \mu)$. and the suspension flow $\phi^{R, T}: (X^R, \nu_{R, T}) \rightarrow (X^R, \nu_{R, T})$ is naturally isomorphic to the original flow $\phi$. There is also a map between the sets of invariant measures for the flow $\phi^{R,T}$ and the map $T$, whose properties are summarized in the following:

\begin{lemma}\label{lemma:suspension:properties} Let $(X, \mu)$ be a measure space, and let $T:(X, \mu) \rightarrow (X, \mu)$ be a $\mu$-preserving bijection. Let $R$ be a positive function in $L^1(X, \mu)$. The map $\eta \longmapsto \eta_{R, T}$, with $d\eta_{R, T} = \frac{1}{\|R\|_{1, \eta}} d\eta dt$ is a bijection between the set of $T$-invariant measures $\eta$ on $X$ so that $R \in L^1(X, \eta)$, and the set of $\phi^{R, T}$ invariant probability measures on $X^R$ (and thus also between the ergodic invariant measures). Moreover, we have \emph{Abramov's formula} relating the entropy of the flow $\phi^{R, T}$ (that is, of its time $1$-map) to the entropy of the map $T$:\begin{equation}\label{eq:entropy} h_{\eta^{R,T}}(\phi^{R, T}_1) = \frac{h_{\eta}(T)}{\|R\|_{1, \eta}}.\end{equation}
\end{lemma}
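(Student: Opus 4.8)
This is a standard fact about suspension flows, and the plan is to verify the bijection directly and then invoke Abramov's formula. First I would set up the correspondence carefully. Given a $T$-invariant measure $\eta$ on $X$ with $R \in L^1(X,\eta)$, define $\eta_{R,T}$ on $X^R$ by disintegrating: on the fiber over $x$, take Lebesgue measure $dt$ on $[0,R(x))$, integrate against $\eta$, and normalize by $\|R\|_{1,\eta} = \int_X R\, d\eta$. One checks $\phi^{R,T}$-invariance by the usual argument: the flow translates the $t$-coordinate, and the identification $(x,R(x))\sim(Tx,0)$ together with $T$-invariance of $\eta$ makes the measure invariant under this translation; explicitly, for a test function one writes the flowed integral, splits the $t$-integral at the roof, and uses the change of variables $x \mapsto Tx$ on the overflow piece. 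This produces a $\phi^{R,T}$-invariant probability measure.

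For the reverse direction, given a $\phi^{R,T}$-invariant probability measure $\nu$ on $X^R$, I would recover $\eta$ as the ``flux'' measure across the cross-section $X \cong X \times \{0\} \subset X^R$: concretely, $\eta(A) = \lim_{\epsilon \to 0} \frac{1}{\epsilon}\nu(\{(x,t): x \in A,\ 0 \le t < \epsilon\})$, or equivalently define $\eta$ via the identity $\int_{X^R} f\, d\nu = \frac{1}{\|R\|_{1,\eta}}\int_X \int_0^{R(x)} f(x,t)\, dt\, d\eta(x)$ for all integrable $f$ — the point being that flow-invariance forces $\nu$ to be of product form in the flow direction, so this disintegration exists and the base measure $\eta$ is $T$-invariant (invariance of $\nu$ under the flow past the roof translates to $T$-invariance of $\eta$). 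One then checks these two assignments are mutually inverse, and that they carry ergodic measures to ergodic measures, since ergodicity of $\phi^{R,T}$ with respect to $\eta_{R,T}$ is equivalent to ergodicity of $T$ with respect to $\eta$ (a flow-invariant set in $X^R$ corresponds to a $T$-invariant set in $X$, being a union of fibers up to null sets).

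Finally, Abramov's formula~\eqref{eq:entropy} relating $h_{\eta^{R,T}}(\phi^{R,T}_1)$ to $h_\eta(T)/\|R\|_{1,\eta}$ is a classical theorem of Abramov; I would simply cite it (e.g.~from \cite{Nadkarni} or the standard references) rather than reprove it, since it is not specific to this setting. So the only real content to write out is the construction of the measure bijection and the ergodicity transfer.

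\emph{Main obstacle.} The genuine work is purely measure-theoretic bookkeeping: making the disintegration of a flow-invariant measure on $X^R$ over the base $X$ rigorous (existence of conditional measures, their a.e.\ uniqueness, and the fact that flow-invariance forces them to be Lebesgue on $[0,R(x))$), and then verifying the two maps are inverse to each other and compatible with $L^1$-integrability of $R$ under the new measure. None of this is deep, but it requires care; since all of it is standard, the cleanest route is to present the forward construction explicitly and cite \cite{Nadkarni} for the equivalence of invariant-measure sets and for Abramov's formula.
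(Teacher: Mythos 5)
Your proposal is correct, but note that the paper does not actually provide a proof of this lemma at all: it is stated as a recalled fact, with the sentence \textquotedblleft We first recall these properties, which can be found, e.g., in~\cite{Nadkarni}\textquotedblright\ serving in place of an argument, and Abramov's formula is likewise just quoted. Your sketch fills in what that citation covers, and it is the standard route: normalize $d\eta\,dt$ for the forward direction, recover the base measure from a flow-invariant $\nu$ by disintegration (equivalently, as a flux across the section), check the two constructions are mutually inverse and preserve ergodicity, and cite Abramov for the entropy identity. Since the paper offers no proof to compare against, there is no divergence to report; the only substantive remark is that you correctly identify the one piece of genuine work -- making the disintegration of a flow-invariant measure rigorous and transferring invariance across the roof identification -- which is exactly what the cited reference handles. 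One small caution: in the reverse direction you should note that the flux/limit definition of $\eta$ produces a measure that is a priori unnormalized (it recovers $\eta/\|R\|_{1,\eta}$ rather than a probability measure on $X$), so the correspondence is really between $T$-invariant measures considered up to scaling with $R \in L^1$ and flow-invariant \emph{probability} measures; the paper's statement is phrased loosely on this point as well.
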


\subsection{Ergodicity}\label{subsec:ergodic} In this section, we prove the ergodicity and measure-classification parts of Theorem~\ref{theorem:BCZ:ergodic}.  The ergodicity of the $BCZ$ map with respect to $dm = 2dadb$ follows from 
\begin{description}

\item[Ergodicity] of the horocycle flow with respect to Haar measure $\mu_2$ on $X_2$.
\medskip 

\item[Suspension]  The measure $2 da db ds$ on $X_2$ (viewing it as the suspension space over $\Om$) is an absolutely continuous invariant measure for $\{h_s\}$. 
\medskip
\item[Measure classification] By Dani's measure classification~\cite{Dani}, this must be (a constant multiple of) $\mu_2$.
\medskip
\end{description}
To show uniqueness, suppose $\nu$ was another ergodic $T$ invariant measure on $\Om$. Then it must be supported on a periodic orbit for $T$, since the measure $d\nu ds$ on $X_2$ is a non-Haar ergodic invariant measure for $\{h_s\}$ and thus, by Dani's measure classification, must be supported on a periodic orbit for $\{h_s\}$.\qed\medskip

\subsection{Entropy}\label{subsec:entropy} We now prove the fact that the entropy $h_m(T)$ of $T$ with respect to the Lebesgue probability measure $m$ is $0$, completing the proof of Theorem~\ref{theorem:BCZ:ergodic}. We use Lemma~\ref{lemma:suspension:properties}), which states that the entropy $h_m(T)$ of the BCZ map with respect to $m$ is proportional to the entropy of the horocycle flow $h_{\mu_2}(h_1)$ with respect to Haar measure. To show that $h_{\mu_2}(h_1)=0$, we record a fact which seems to be well-known but does not have a complete proof in the literature as far as we are aware:

\begin{Theorem}\label{theorem:horocycle:entropy} Let $\Gamma \subset SL(2, \R)$ be a lattice. Let $\mu_2$ denote the finite measure on $SL(2, \R)/\Gamma$ induced by Haar measure. Then $$h_{\mu_2}(h_1) =0.$$ In fact, the topological entropy $$h_{top}(h_1) =0.$$\end{Theorem}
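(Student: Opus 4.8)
The plan is to prove the topological statement $h_{top}(h_1)=0$ by a direct counting estimate and then deduce $h_{\mu_2}(h_1)=0$ for free. The single fact that drives everything is that $h_1$ acts on $X=SL(2,\R)/\Gamma$ by left translation by a \emph{unipotent} element: trivializing $TX$ by the right-invariant vector fields coming from $\f g=\mathfrak{sl}(2,\R)$, the derivative of $h_1^k=h_k$ at every point of $X$ is the single fixed linear map $\operatorname{Ad}(h_1)^k=\operatorname{Ad}(h_k)$. Writing $h_k=\exp(-kf)$ with $f=\bigl(\begin{smallmatrix}0&0\\1&0\end{smallmatrix}\bigr)$ and using $\ad(f)^3=0$, we get $\operatorname{Ad}(h_k)=I-k\,\ad(f)+\tfrac{k^2}{2}\ad(f)^2$, so $\|\operatorname{Ad}(h_1)^k\|=O(k^2)$ uniformly in $k$: nearby orbits of $h_1$ diverge only polynomially fast.

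First I would fix a right-invariant Riemannian metric $d$ on $X$; right-invariance makes $t\mapsto\exp(tv)z$ a curve of constant speed $\|v\|_{\f g}$, so $d(\exp(v)z,z)\le\|v\|$ for \emph{all} $z\in X$ and $v\in\f g$ --- a global bound costing no compactness. On a given compact $K\subset X$ there are constants $C_K,r_0>0$ so that $x,y\in K$ with $d(x,y)<r_0$ can be written $y=\exp(v)x$ with $\|v\|\le C_K\,d(x,y)$. The group identity $h_k\exp(v)x=\exp(\operatorname{Ad}(h_k)v)\,h_kx$ then gives, for such $x,y$ and all $k\ge0$,
\[
d(h_1^kx,h_1^ky)\le\|\operatorname{Ad}(h_1)^kv\|\le C\,C_K\,k^2\,d(x,y),
\]
so the Bowen distance $d_n(x,y)=\max_{0\le k<n}d(h_1^kx,h_1^ky)$ is $<\eps$ whenever $d(x,y)<r:=\eps/(C\,C_K\,n^2)$, and $r<r_0$ once $n$ is large. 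Covering $K$ by $\lesssim_K r^{-3}$ metric balls of radius $r/2$ (possible since $\dim X=3$ and $K$ is compact), each such ball meets any $(n,\eps)$-separated subset of $K$ in at most one point, so that subset has $\lesssim_K(n^2/\eps)^3$ elements; hence $\tfrac1n\log s(n,\eps,K)\to0$, $h_{top}(h_1,K)=0$, and $h_{top}(h_1)=\sup_K h_{top}(h_1,K)=0$.

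The measure statement then follows from $h_{\mu_2}(h_1)\le h_{top}(h_1)=0$, the easy half of the variational principle, valid here since $\mu_2$ is a finite Borel measure and $h_1$ a homeomorphism (e.g.\ through Katok's $(n,\eps)$-set characterization of $h_\mu$ applied to compact sets of $\mu_2$-measure $\ge1-\delta$). One could equally argue it directly: Ruelle's inequality bounds $h_{\mu_2}(h_1)$ by $\int_X(\sum_i\lambda_i^+)\,d\mu_2$, which is $0$ because every Lyapunov exponent of $h_1$ vanishes ($\operatorname{Ad}(h_1)$ is unipotent); or one uses the conjugacy $g_{\log n}h_1g_{-\log n}=h_{1/n}$ of \eqref{eq:conj} together with $h_{\mu_2}(h_1)=n\,h_{\mu_2}(h_{1/n})$ and $h_{\mu_2}(h_{1/n})=h_{\mu_2}(h_1)$ to force $h_{\mu_2}(h_1)=0$ once finiteness is known.

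The hard part, and the reason this folklore fact is worth writing out, is purely the non-compactness of $X$: the comparison constants $C_K$ and $r_0$ genuinely degenerate as one approaches the cusp (the injectivity radius tends to $0$), so the counting can only be carried out on a fixed compact $K$ --- which is exactly why Bowen's $\sup_K$ definition is used, and why it is the \emph{global}, compactness-free bound $d(\exp(v)z,z)\le\|v\|$ that must propagate the estimate along the entire orbit segment $\{h_1^kz\}_{0\le k<n}$. The only other point requiring attention is citing a correct non-compact version of $h_\mu\le h_{top}$ (or of Ruelle's inequality), whose standard proofs do go through for a finite-volume homogeneous space.
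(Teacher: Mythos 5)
Your proposal is correct, and it takes a genuinely different route from the paper. The paper does \emph{not} estimate $(n,\eps)$-separated sets directly: for cocompact $\Gamma$ it cites Gur\v{e}vic's theorem; for non-uniform $\Gamma$ it combines the Handel--Kitchens variational principle with Bowen's finiteness theorem $h_{top}(h_1)<\infty$ (which uses that $h_1$ is smooth) and the scaling identity $h_{top}(h_1)=h_{top}(h_s)=s\,h_{top}(h_1)$ coming from the conjugacy $g_th_sg_{-t}=h_{se^{-t}}$, so that $h_{top}(h_1)$ is both finite and self-similar, hence zero. Your argument instead unpacks, in this special case, exactly why the entropy vanishes: $\operatorname{Ad}(h_1)$ is unipotent with $\operatorname{ad}(f)^3=0$, so $\|\operatorname{Ad}(h_1)^k\|=O(k^2)$, and the right-invariance of the metric turns this into the global polynomial bound $d(h_1^kx,h_1^ky)\le C C_K\,k^2\,d(x,y)$ valid even when the orbit escapes $K$, giving $s(n,\eps,K)=O_K\bigl((n^2/\eps)^3\bigr)$ and hence $h_{top}(h_1,K)=0$ for every compact $K$ outright. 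This is more elementary and self-contained (no appeal to Gur\v{e}vic or to Bowen's finiteness theorem), it is quantitative (polynomial growth of separated sets), and it visibly generalizes to any unipotent translation on any $G/\Gamma$. What the paper's route buys is brevity modulo the cited theorems and the elegance of the scaling trick, which sidesteps any explicit estimate. Both proofs share the same genuine subtlety that you correctly isolate: deducing $h_{\mu_2}(h_1)\le h_{top}(h_1)$ on the non-compact space $X_2$ requires a locally compact version of the variational principle (Handel--Kitchens in the paper; your Katok/Ruelle alternatives work equally well, using that $\|Dh_1\|$ is uniformly bounded by homogeneity and that $\mu_2$ is finite). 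Two very small points worth tightening if you wrote this up: state explicitly that the metric on $X_2$ is the quotient of a \emph{right}-invariant metric on $SL(2,\R)$ (so that right $\Gamma$-translations are isometries and it descends), and that $\mu_2$ is $h_1$-ergodic so Katok's formula applies in its usual form.
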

\begin{proof} When $\Gamma$ is a \emph{uniform} lattice, that is, the quotient $\h^2/\Gamma$ is compact, the second assertion is a theorem of Gur\v{e}vic~\cite{Gurevic}, and together with the standard variational principle (see, e.g., Walters~\cite{Walters}), which states that the topological entropy is the supremum of the measure-theoretic entropies. When $\Gamma$ is non-uniform, we can use a version of the variational principle for locally compact spaces due to Handel-Kitchens~\cite{HK}, to again reduce the measure-theoretic statement to a topological statement. Since the horocycle flow is $C^{\infty}$, we can apply a theorem of Bowen~\cite{Bowen} to conclude that $$h_{top} (h_1) < \infty.$$ On the other hand, since $h_s$ is conjugate to $h_1$ for any $s>0$, we have $$h_{top}(h_1) = h_{top}(h_s) = sh_{top}(h_1),$$ which implies that $h_{top}(h_1) =0$.
\end{proof}

\section{Structure of Periodic Orbits}\label{sec:period}

In this section we prove Theorems~\ref{theorem:dense:periodic} and~\ref{theorem:structure:periodic} and describe in detail the relationship between periodic orbits for $T$ and $h_s$. We first record (in \S\ref{subsec:fareyperiod}) our geometric proof of the key observation (\ref{eq:farey:bcz}) that certain periodic orbits of the BCZ map parameterize Farey fractions.

\subsection{Farey fractions and lattices}\label{subsec:fareyperiod} In this section, we give a geometric proof the following
\begin{lemma}\label{lem:farey:key} Let $Q$ be a positive integer, and let $$\F(Q) =\{ \frac{0}{1} = \gamma_1 < \gamma_2 = \frac{1}{Q} < \ldots < \gamma_i = \frac{p_i}{q_i} < \ldots < \gamma_{N} = \frac 1 1\},$$ $N = N(Q) = \sum_{q \le Q} \varphi(q)$ denote the Farey sequence. Then (\ref{eq:farey:bcz}) holds, that is, $$T^i\left(\frac{1}{Q}, 1\right) = \left(\frac{q_i}{Q}, \frac{q_{i+1}}{Q}\right).$$\end{lemma}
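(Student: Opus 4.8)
The plan is to prove Lemma~\ref{lem:farey:key} by induction on $i$, using the geometric description of the BCZ map developed in \S\ref{sec:obs:slope}: namely that $\Om$ is the set of horizontally short lattices, and that $T$ records the successive vectors of a lattice that become horizontal under the horocycle flow, tracking both the horizontal coordinate and the ``next'' vector $(b, a^{-1})^T$ in the strip $V_1$. The base case is the two ``simple observations'' already recorded in \S\ref{subsubsec:period}: $(q_1/Q, q_2/Q) = (1/Q, 1) = T^0(1/Q,1)$, which follows since $\gamma_1 = 0/1$, $\gamma_2 = 1/Q$, so $q_1 = 1$, $q_2 = Q$ — wait, one must be careful with the indexing convention; I would instead verify directly that $p_{1/Q,1}\Z^2$ contains exactly the vectors $(q/Q, \ast)^T$ for $q$ a denominator of a Farey fraction, identifying the lattice explicitly.

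The key step is to identify $p_{1/Q,1}\Z^2$ concretely. I claim $p_{1/Q,1}\Z^2 = \{ (\tfrac{m}{Q} + \tfrac{n}{Q}\cdot 0,\, \ast) \}$ — more precisely, the matrix $p_{1/Q,1} = \left(\begin{smallmatrix} 1/Q & 1 \\ 0 & Q \end{smallmatrix}\right)$ sends $(m,n)^T$ to $(\tfrac{m}{Q} + n,\, nQ)^T$. Rescaling by the homothety that doesn't change slopes, the relevant data is: a vector of this lattice lies in the strip $V_1 = \{0 < x \le 1\}$ iff its first coordinate is in $(0,1]$, i.e. $m + nQ \in (0, Q]$; among primitive such vectors, ordering by slope $\tfrac{nQ}{(m+nQ)/Q} = \tfrac{nQ^2}{m+nQ}$ corresponds exactly to ordering the fractions $\tfrac{m+nQ}{nQ^2}$... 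Here the cleaner route is to observe that the horizontal vectors that appear (when we flow the lattice, each strip-vector eventually becomes horizontal) are exactly $(q_i/Q, \ast)$ where $q_i$ runs over Farey denominators, and that the relation $a_{i+1}q_i - p_i q_{i+1} = 1$ together with $q_i + q_{i+1} > Q$ (recorded in \S\ref{subsubsec:period}) is precisely the statement that $(q_i/Q, q_{i+1}/Q) \in \Om$ and that these are \emph{consecutive} returns. So the inductive step reduces to: given that $T^{i-1}(1/Q,1) = (q_i/Q, q_{i+1}/Q)$, the formula (\ref{eq:bcz}) gives $T^i(1/Q,1) = (q_{i+1}/Q, -q_i/Q + \lfloor \tfrac{1 + q_i/Q}{q_{i+1}/Q}\rfloor q_{i+1}/Q)$, and one must check the second coordinate equals $q_{i+2}/Q$, i.e. that $q_{i+2} = -q_i + \lfloor \tfrac{Q + q_i}{q_{i+1}} \rfloor q_{i+1}$. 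This is the classical three-term recurrence for Farey denominators, $q_{i+2} = \kappa_{i+1} q_{i+1} - q_i$ with $\kappa_{i+1} = \lfloor \tfrac{Q+q_i}{q_{i+1}}\rfloor$, which is a standard property of the Stern–Brocot/Farey structure (the mediant property plus the maximality condition $q_i + q_{i+1} > Q$).

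The main obstacle — and the place where I would spend the most care — is giving the \emph{geometric} proof as promised, rather than just quoting the arithmetic recurrence. That means showing directly from the lattice picture that if $(a/Q, b/Q)$ (with $p,q$-data as above) is a return point, then the next vector of $p_{1/Q,1}\Z^2 \cap V_1$ to become horizontal under $h_s$ is exactly $(q_{i+2}/Q, \ast)^T$, where $q_{i+2}$ is the denominator of the next Farey fraction. Concretely: under $h_s$ the slopes decrease linearly, so the vectors of $V_1 \cap \La$ become horizontal in the order of decreasing slope; one must show the slope order of the primitive vectors with first coordinate $q/Q$ matches the order of the Farey fractions $p/q$, and that consecutiveness of returns matches consecutiveness in $\F(Q)$. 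I would derive this from the identification in \S\ref{subsec:hyp} (or directly) that the vector $(q_i/Q, t)^T \in \La$ with the relevant second coordinate corresponds to $\gamma_i = p_i/q_i$, using that $p_{1/Q,1}(p, q)^T$ has first coordinate $(p+qQ)/Q$ — so one needs the change of basis matching $\{$strip vectors$\} \leftrightarrow \{$Farey fractions$\}$, which is where the conditions $q_i + q_{i+1} > Q$ and $\det = 1$ get used to pin down the correspondence uniquely. Once that dictionary is set up, (\ref{eq:farey:bcz}) is immediate from Theorem~\ref{theorem:main} and the observation (\ref{eq:slopegap}).
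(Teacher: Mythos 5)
Your primary argument — the arithmetic induction, reducing (\ref{eq:farey:bcz}) to the classical recurrence $q_{i+2} = \left\lfloor \frac{Q+q_i}{q_{i+1}} \right\rfloor q_{i+1} - q_i$ for consecutive Farey denominators and verifying it against (\ref{eq:bcz}) — is correct, but it is a genuinely different route from the paper's: it is essentially the original Boca--Cobeli--Zaharescu computation, and it presupposes the Farey recurrence rather than deriving it. The paper instead gives a geometric proof whose one crucial step your sketch is missing: replace $p_{1/Q,1}$ by the $SL(2,\Z)$-equivalent diagonal matrix, i.e.\ write $p_{1/Q,1}\Z^2 = g_{t_Q}\Z^2$ with $g_{t_Q} = \operatorname{diag}(Q^{-1},Q)$. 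That single rewriting makes the lattice vectors $(q/Q,\,pQ)^T$ whose slopes are $Q^2\cdot\frac{p}{q}$; the vectors in the strip $V_1$ are exactly those with $q \le Q$, so the hitting times of $\Om$ are exactly the rescaled Farey fractions $Q^2\gamma_i$, and the column computation $h_{Q^2\gamma_i} g_{t_Q}\begin{pmatrix} q_i & q_{i+1}\\ p_i & p_{i+1}\end{pmatrix} = p_{q_i/Q,\,q_{i+1}/Q}$ (using $p_{i+1}q_i - p_i q_{i+1}=1$) reads off $T^i(1/Q,1)$ directly, with the Farey recurrence emerging as a corollary rather than an input. By contrast you keep the shear $p_{1/Q,1}$ and land on vectors $((m+nQ)/Q,\,nQ)^T$, which forces the change of variables $(m,n)\mapsto(m+nQ,n)$ — exactly the unimodular matrix the paper's rewriting absorbs at the outset. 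One small slip in your geometric sketch: since $h_s$ decreases slopes, the strip vectors become horizontal in order of \emph{increasing} (not decreasing) slope, which is what matches Farey order. In sum, your arithmetic proof is shorter and complete; the paper's geometric proof is what delivers the extra dividend, also needed later, that the sum of the return times over the discrete orbit is the full $h_s$-period $Q^2$.
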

\medskip
\noindent\textbf{Remark:} Here, $i$ is interepreted cyclically (i.e., in $\Z/N\Z$). In particular, this is a \emph{periodic orbit} for $T$ (and for $h_s$). Note also that the return time gives (normalized) gaps between the Farey fractions, that is, 
$$R \circ T^i\left(\frac{1}{Q}, 1\right) = \frac{Q^2}{q_i q_{i+1}} = Q^2 (\gamma_{i+1} - \gamma_i).$$

\begin{proof}
Geometrically, the sequence $\F(Q)$ correspond to the slopes of primitive integer vectors $\left(\begin{array}{c}q_i \\p_i\end{array}\right)$ in the (closed) triangle $T_Q$ with vertices at $(0,0)$, $(Q, 0)$, and $(Q, Q)$. Now note that since $\Z^2$ contains a vertical vector, it is \emph{periodic} under $h_s$, with period 1. Using the conjugation relation (\ref{eq:conj}), we have that the lattice $g_t \Z^2$ has period $e^{-t}$ under $h_s$, since
$$h_{e^{-t}} g_t \Z^2 = g_{t} h_{1} \Z^2 = g_t \Z^2.$$  
Note that $g_t$ scales slopes (and their differences) by $e^{-t}$. For $Q \in \N$, setting $t_Q = -2\ln Q$, consider the lattice
$$g_{t_Q} \Z^2 = \left(\begin{array}{cc}Q^{-1} & 0 \\0 & Q\end{array}\right)\Z^2$$
Note that we can also write this as
$$\left(\begin{array}{cc}Q^{-1} & 0 \\0 & Q\end{array}\right)\Z^2 = \left(\begin{array}{cc}Q^{-1} & 1\\0 & Q\end{array}\right) \left(\begin{array}{cc}1 & -Q \\0 & 1\end{array}\right)\Z^2 = \left(\begin{array}{cc}Q^{-1} & 1 \\0 & Q\end{array}\right)\Z^2$$
This lattice corresponds to the point $(Q^{-1}, 1)$ in the Farey triangle $\Om$. By our earlier observation, this has period $Q^2$ under $h_s$. We are interested in the behavior of the orbit of this point under $T$. Recall that the BCZ map only `sees' primitive vectors with horizontal component less than 1 that is, in the strip $V= V_1$, since $h_s$ does not change the horizontal component of vectors. Originally we were interested in Farey fractions of level $Q$, that is, the slopes of (primitive) integer vectors in the region $T_Q$. Applying the matrix $g_{t_Q}$, the region $T_Q$ is transformed into a subset $V$. Thus, the vectors $\left(\begin{array}{c}q_i \\p_i\end{array}\right) \in T_Q$ correspond to vectors $\left(\begin{array}{c}Q^{-1}q_i \\Q p_i\end{array}\right) \in g_{t_Q} T_Q \subset V$. Therefore, they are seen by the BCZ map. Precisely, $g_{t_Q} T_Q$ is the triangle with vertices at $(0,0)$, $(1,0)$, and $(1, Q^2)$. See Figure~\ref{fig:scaledtriangle:image}.

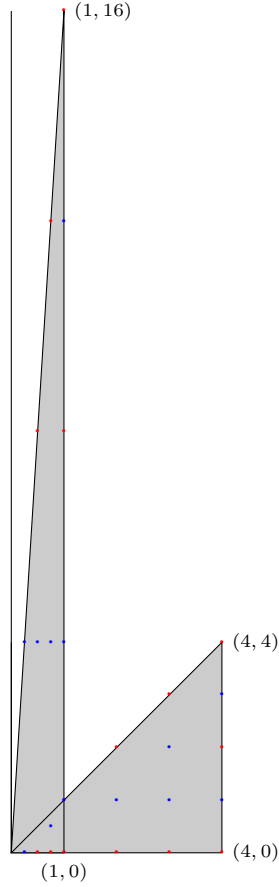
\begin{figure}\caption{The triangles $T_4$ and $g_{t_4}T_4$, with lattice points. Primitive lattice points in \textcolor{blue}{blue}, others are in \textcolor{red}{red} .}\label{fig:scaledtriangle:image}
\begin{tikzpicture}[scale=.7]
\filldraw[fill=black!20!white] (0,0)--(4,4)--(4,0)--cycle;
\draw (0,0)--(0,4);
\path (4, 0) node[right]{\tiny $(4,0)$};
\path (4, 4) node[right]{\tiny $(4,4)$};
\path (4, 4) node{\textcolor{red}{$\cdot$}};
\path (3, 3) node{\textcolor{red}{$\cdot$}};
\path (2, 2) node{\textcolor{red}{$\cdot$}};
\path (1, 1) node{\textcolor{blue}{$\cdot$}};
\path (4, 0) node{\textcolor{red}{$\cdot$}};
\path (3, 0) node{\textcolor{red}{$\cdot$}};
\path (2, 0) node{\textcolor{red}{$\cdot$}};
\path (1, 0) node{\textcolor{blue}{$\cdot$}};
\path (4, 3) node{\textcolor{blue}{$\cdot$}};
\path (4, 2) node{\textcolor{red}{$\cdot$}};
\path (4, 1) node{\textcolor{blue}{$\cdot$}};
\path (3, 2) node{\textcolor{blue}{$\cdot$}};
\path (3, 1) node{\textcolor{blue}{$\cdot$}};
\path (2, 1) node{\textcolor{blue}{$\cdot$}};
\filldraw[fill=black!20!white] (0,0)--(1,16)--(1,0)--cycle;
\draw (0,0)--(0,16);
\path (1, 0) node[below]{\tiny $(1,0)$};
\path (1, 16) node[right]{\tiny $(1,16)$};
\path (1, 16) node{\textcolor{red}{$\cdot$}};
\path (3/4, 12) node{\textcolor{red}{$\cdot$}};
\path (1/2, 8) node{\textcolor{red}{$\cdot$}};
\path (1/4, 4) node{\textcolor{blue}{$\cdot$}};
\path (1, 0) node{\textcolor{red}{$\cdot$}};
\path (3/4, 0) node{\textcolor{red}{$\cdot$}};
\path (2/4, 0) node{\textcolor{red}{$\cdot$}};
\path (1/4, 0) node{\textcolor{blue}{$\cdot$}};
\path (1, 12) node{\textcolor{blue}{$\cdot$}};
\path (1, 8) node{\textcolor{red}{$\cdot$}};
\path (1, 4) node{\textcolor{blue}{$\cdot$}};
\path (3/4, 1/2) node{\textcolor{blue}{$\cdot$}};
\path (3/4, 4) node{\textcolor{blue}{$\cdot$}};
\path (1/2, 4) node{\textcolor{blue}{$\cdot$}};
\draw (0,0)--(1,1);
\path (1,1)node[blue]{$\cdot$};
\end{tikzpicture}
\end{figure}

\medskip
\noindent As noted above, the (primitive) vectors $\left(\begin{array}{c}Q^{-1} q_i \\Q p_i\end{array}\right)  \in g_{t_Q} \Z^2$ will be seen by the BCZ map in the order of their slopes (that is, at time $Q^2 \gamma_i$), and the time in between the $i^{th}$ and $(i+1)^{st}$ vectors will precisely be the difference in their slopes 
$$Q^2(\gamma_{i+1} - \gamma_i).$$

\noindent At time $0$ we are seeing the horizontal vector $\left(\begin{array}{c}Q^{-1} \\0\end{array}\right)$ (corresponding to $\gamma_0$), and the next vector to get short is $\left(\begin{array}{c}1 \\Q\end{array}\right)$ (corresponding to $\gamma_1$), which will become horizontal in time 
$$R(Q^{-1}, 1) = Q = Q^2 \left(\frac 1 Q - 0\right) = Q^2 (\gamma_1 - \gamma_0).$$
More generally, we have that $T^i(Q^{-1}, 1)$ corresponds to the lattice $h_{Q^2 \gamma_i} g_{t_Q} \Z^2$, and noting 
$$h_{Q^2\gamma_i} g_{t_Q}\left(\begin{array}{cc}q_i & q_{i+1} \\p_i & p_{i+1}\end{array}\right) = \left(\begin{array}{cc}Q^{-1}q_i &Q^{-1} q_{i+1} \\0 & Q q_i^{-1}\end{array}\right).$$
we obtain, as desired
$$T^{i} (Q^{-1}, 1) = \left(\frac{q_i}{Q}, \frac{q_{i+1}}{Q}\right),$$
and
$$R(T^{i} (Q^{-1}, 1)) = Q^2(\gamma_{i+1} - \gamma_i).$$
In particular, the point $(Q^{-1}, 1)$ is $T$-periodic with period $N(Q)$. Also note that 
$$\sum_{i=0}^{N(Q) -1} R(T^{i} (Q^{-1}, 1)) = Q^2,$$
and thus, the sum of the return times over the discrete periodic orbit correspond, as they must, to the continuous period. \end{proof}

\subsection{Proof of Theorem~\ref{theorem:dense:periodic}}

We first note that any periodic point $(a,b) \in \Om$ for $T$ corresponds to a periodic point $p_{a,b} \Z^2 \in \tilde{Y}$ for $h_s$ on $X_2$ and vice-versa. Precisely, suppose $(a, b) \in \Om$ is periodic under $T$, and $n = P(a,b)>0$ is the minimal period, so $$T^n (a,b) =(a,b)$$ and $T^j(a,b) \neq (a,b)$ for $j < n$. Then $$h_{s(a,b)} p_{a,b} \Z^2 = p_{a,b} \Z^2,$$ where $$s(a,b) = \sum_{i=0}^{n-1} R(T^i(a,b)).$$ Note that the points $(a,a) \in \Om\backslash \Om$ have period $P(a,a) =1$, and that a direct calculation shows that $s(a,a) = R(a,a) = \frac{1}{x^2}$.

Periodic orbits for $h_s$ occur in natural families, due the conjugation relation (\ref{eq:conj}). In particular, if $\La$ is periodic with period  $s_0$, \begin{equation}\label{eq:conj:period} h_{s_0 e^{-t}} g_t \La = g_t h_{s_0} \La = g_t \La,\end{equation} so the flow period of $g_t g \Z^2$ is $s_0 e^{-t}$. We have $$g_t \La_{a,b} = \La_{e^{t/2} a, e^{t/2} b},$$ so we see that if $(a, b)$ is periodic for $T$, so is the entire line segment $$\left\{(ta, tb), 1 \geq t > \frac{1}{a+b}\right\}.$$

Thus, to analyze which points have periodic orbits, it suffices to consider points of the form $(1, b)$ or $(a, 1)$. Since $T(a, 1) = (1, 1-a)$, we consider points of the first kind, or, equivalently, lattices of the form $\La_{1, b}$. This is a periodic point under $h_s$ if and only if there exists an $s_0$ so that $h_{s_0} \La_{1, b} = \La_{1,b}$, or, equivalently, $$ p_{1, b}^{-1} h_{s_0} p_{1,b} \in SL(2, \Z).$$ A direct calculation shows that this is impossible if $y$ is irrational, and if $y = \frac{k}{l}$ the minimal such $s_0 = l^2$, and we have \begin{equation}\label{eq:period:matrix}  p_{1, b}^{-1} h_{s_0} p_{1,b} =  \left(\begin{array}{cc}1+kl & k^2 \\-l^2 & 1-kl\end{array}\right).\end{equation} This proves Theorem~\ref{theorem:dense:periodic}, since we have shown that any point with rational slope is periodic.\qed\medskip

\subsection{Proof of Theorem~\ref{theorem:structure:periodic}}

As above, we consider the point $(1, b)$. If $b = \frac{k}{l}$, $k \le  l \in \N$ relatively prime, then the period under $h_s$ is $l^2$.  The line segment associated to this point is $$\left\{\left(t, t \frac k l\right) : t \in \left( \frac{l}{l + k}, 1\right]\right\}.$$ Applying (\ref{eq:conj}) to $p_{t, t\frac{k}{l}} \Z^2$, we can calculate the period $$s\left(t, t\frac{k}{l}\right)= \frac{l^2}{t^2}.$$ To calculate the discrete period, we analyze the scalings of the Farey periodic orbits $$\left\{T^i\left(1, \frac{1}{Q}\right) \right\}_{i=0}^{N(Q) -1}$$ using the following proposition, a special case of Theorem~\ref{theorem:structure:periodic} which we will use to prove the general result:

\begin{Prop}\label{prop:farey:period} For $t \in \left(\frac{Q}{Q+1}, 1\right]$, $$P\left(t, \frac{t}{Q}\right)= N(Q).$$
\end{Prop}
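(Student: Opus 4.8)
The plan is to reduce the statement about the scaled point $\left(t, \frac{t}{Q}\right)$ to the already-established Farey case (Lemma~\ref{lem:farey:key}) via the self-similarity of the BCZ map. The key point is that, by the discussion in \S\ref{subsec:fareyperiod}, the point $\left(\frac1Q, 1\right)$ corresponds to the lattice $g_{t_Q}\Z^2$ with $t_Q = -2\ln Q$, which is $h_s$-periodic of period $Q^2$ and $T$-periodic of period $N(Q)$. Applying the conjugation relation (\ref{eq:conj}) $g_u h_s g_{-u} = h_{se^{-u}}$, the lattice $g_u g_{t_Q}\Z^2 = \La_{e^{-(u+t_Q)/2}, \text{(something)}}$ is again $h_s$-periodic, with period $Q^2 e^{-u}$, and — crucially — its $T$-orbit is a \emph{scaling} of the Farey orbit. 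Concretely, since $g_u \La_{a,b} = \La_{e^{-u/2}a, e^{-u/2}b}$ (recorded in \S\ref{subsec:hyp}), the whole segment $\left\{(ta, tb): t \in \left(\frac{1}{a+b}, 1\right]\right\}$ through a $T$-periodic point is $T$-periodic with the \emph{same discrete period}, because scaling by $g_u$ merely reparametrizes the continuous flow and conjugates $T$ to itself on the relevant transversals (this is the content of the self-similarity observation and equations (\ref{eq:conj:discrete}), (\ref{eq:conj})).

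First I would make precise that the discrete period $P$ is constant along such a segment: if $(a,b)$ is $T$-periodic of period $n$, then for $t \in \left(\frac{1}{a+b},1\right]$ the point $(ta,tb)$ lies in $\Om$, the lattice $p_{ta,tb}\Z^2 = g_{-2\ln t}\,p_{a,b}\Z^2$ is $h_s$-periodic of period $t^2 s(a,b)$, and the $T$-orbit of $(ta,tb)$ consists exactly of the scalings (by the appropriate $g$-element at each step, tracked by the cocycle (\ref{eq:cocycle})) of the $T$-orbit of $(a,b)$; hence it closes up after exactly $n$ steps and not before. Second, I would apply this to the Farey point $(a,b) = \left(\frac1Q,1\right)$, which has $a+b = 1 + \frac1Q$, so $\frac{1}{a+b} = \frac{Q}{Q+1}$, and for which $P = N(Q)$ by Lemma~\ref{lem:farey:key}. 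Scaling by $t \in \left(\frac{Q}{Q+1},1\right]$ gives the point $\left(\frac{t}{Q}, t\right)$. Third, since $T\left(\frac{t}{Q}, t\right) = \left(t, -\frac{t}{Q} + \left\lfloor\frac{1+t/Q}{t}\right\rfloor t\right) = \left(t, \frac{t}{Q}\right)$ — here one checks $\left\lfloor\frac{1+t/Q}{t}\right\rfloor = \left\lfloor\frac1t + \frac1Q\right\rfloor = 1$ for $t \in \left(\frac{Q}{Q+1},1\right]$, noting $\frac1t + \frac1Q \in \left[1 + \frac1Q, \frac{Q+1}{Q} + \frac1Q\right) = \left[1+\frac1Q, 1 + \frac2Q\right)$, which lies in $[1,2)$ precisely when $Q \geq 2$, and for $Q=1$ one treats the degenerate case directly — the points $\left(\frac{t}{Q},t\right)$ and $\left(t,\frac{t}{Q}\right)$ lie on the same $T$-orbit, so they have the same period. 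Therefore $P\left(t,\frac{t}{Q}\right) = P\left(\frac{t}{Q}, t\right) = P\left(\frac1Q, 1\right) = N(Q)$, as claimed.

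The main obstacle I anticipate is the careful bookkeeping in the claim that $P$ is constant along the scaling segment: one must verify that the $g_u$-conjugation really does carry the \emph{entire} $T$-orbit of $(a,b)$ to the $T$-orbit of $(ta,tb)$ step by step, i.e. that the floor functions $\kappa$ appearing in the cocycle (\ref{eq:cocycle}) along the scaled orbit match (after the appropriate intermediate $g$-rescalings) those along the original orbit, so that no return to $\Om$ is skipped or added. This is morally immediate from the fact that $h_s$-orbits and their intersection times with the transversals $\Om_t$ are geometric objects intertwined by $g_u$ — the visits to $\Om$ of the scaled lattice are exactly the $g_u$-images of visits to $\Om_{1/t}$ of the original — but stating it cleanly requires invoking the conjugacies $T^{(t)} \cong T_t \cong T$ from \S\ref{subsec:selfsim} rather than recomputing floors by hand. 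A secondary minor nuisance is the edge case $t=1$ (where the segment degenerate to a single endpoint giving back $\left(\frac1Q,1\right)$ itself) and $Q=1$ (where $N(1)=1$ and the Farey "sequence" is trivial), both of which should be dispatched by direct inspection.
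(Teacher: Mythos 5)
Your central claim --- that "the whole segment $\left\{(ta, tb): t \in \left(\frac{1}{a+b}, 1\right]\right\}$ through a $T$-periodic point is $T$-periodic with the same discrete period" --- is false as a general statement, and the conjugation heuristic you offer does not prove it. What the relation $p_{ta,tb}\Z^2 = g_u\,p_{a,b}\Z^2$ actually gives is this: the $\Om$-visits of $\La_{ta,tb}$ correspond, under $g_u$, to visits of $\La_{a,b}$ to the \emph{larger} transversal $\Om_{1/t}$ (the lattices with a horizontal vector of length $\le 1/t > 1$), not to $\Om$ itself. Since $\Om_{1/t} \supsetneq \Om$, there can be \emph{extra} visits, and the conclusion you can draw from pure conjugation is only $P(ta,tb) \ge P(a,b)$. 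This is exactly what the paper's Theorem~\ref{theorem:structure:periodic} and Figure~\ref{fig:periodic:segment} record: the segment over $\left(1, \frac{2}{3}\right)$, for instance, splits into a piece of period $N(3)=4$ and a piece of period $N(4)=6$, so period is genuinely non-constant along scaling segments. You do flag the need to check that "no return to $\Om$ is skipped or added," but then wave it off as morally immediate; in fact that check is the whole content of the statement, and for a generic starting point it \emph{fails}.

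What makes the Farey orbit $\left(\frac1Q, 1\right)$ special is an arithmetic fact that your argument never invokes: for $t \in \left(\frac{Q}{Q+1}, 1\right]$ one has $Q < \frac{Q}{t} < Q+1$, so there are no new primitive integer points in the widened strip, equivalently the tile assignment $\kappa$ is unchanged along the entire scaled orbit. This is exactly what the paper's Lemma~\ref{lemma:farey:period:index} establishes by a direct floor estimate $\left\lfloor \frac{Q/t + q_i}{q_{i+1}}\right\rfloor = \left\lfloor \frac{Q+q_i}{q_{i+1}}\right\rfloor$ using $q_{i+1}+q_{i+2} > Q$, and it is this computation (not the abstract self-similarity) that carries the proof. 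As a secondary point, your identity $T\left(\frac{t}{Q}, t\right) = \left(t, \frac{t}{Q}\right)$ is also miscomputed: with $\left\lfloor\frac1t + \frac1Q\right\rfloor = 1$ one gets $T\left(\frac{t}{Q}, t\right) = \left(t, t\frac{Q-1}{Q}\right)$, which equals $\left(t, \frac{t}{Q}\right)$ only when $Q=2$; the two points do lie on the same $T$-orbit (it is $T^{N(Q)-1}$ that takes one to the other, once the floor-preservation lemma is in place), but they are not consecutive.
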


\medskip

This will follow from showing that $T$ in fact is \emph{linear} along the orbit of the segment $\left\{\left(t, \frac {t} {Q} \right): t \in \left(\frac{Q}{Q+1}, 1\right]\right\}$, that is, the segment does not `break up' into pieces. Precisely, we have:

\begin{lemma}\label{lemma:farey:period:index} For $t \in \left(\frac{Q}{Q+1}, 1\right]$, $1 \le i \le N(Q)$, $$\kappa\left(T^i \left(t, \frac t Q\right)\right) = \kappa\left(T^i \left(1, \frac 1 Q\right)\right).$$
\end{lemma}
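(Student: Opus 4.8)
The plan is to prove Lemma~\ref{lemma:farey:period:index} by induction on $i$, tracking how the line segment $\sigma_t := \left\{\left(t, \tfrac{t}{Q}\right): t \in \left(\tfrac{Q}{Q+1}, 1\right]\right\}$ evolves under iteration of $T$. The key point is that $\kappa$ is constant on each tile $\Om_k$, so the claim $\kappa(T^i(t,t/Q)) = \kappa(T^i(1,1/Q))$ amounts to showing that $T^i$ maps the entire open segment $\sigma_t$ (with parameter $t$ ranging over $\left(\tfrac{Q}{Q+1},1\right]$) into a single tile $\Om_{k_i}$, where $k_i = \kappa(T^i(1,1/Q))$. If this holds for all $i$ up to some stage, then on that segment $T$ acts by the single linear map $(a,b)\mapsto (a,b)A_{k_i}^T$, so $T^{i+1}$ is linear on $\sigma_t$ and its image is again a segment; one then checks that this image segment lies in a single tile.

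The concrete mechanism I would use is the geometric picture from \S\ref{subsec:selfsim} and \S\ref{subsec:fareyperiod}: the point $(t,t/Q)$ is $M_t$ applied to $(1,1/Q)$ scaled appropriately --- more precisely, $(t,t/Q) = g_{-2\ln(1/t)}\cdot(1,1/Q)$ in the lattice picture (recall $g_s\La_{a,b} = \La_{e^{-s/2}a, e^{-s/2}b}$), i.e. the segment $\sigma_t$ is a piece of a $g_t$-orbit through the Farey periodic point $(1,1/Q)$. Since $g_t$ commutes with the horocycle flow up to time-rescaling (relation~(\ref{eq:conj})), the $h_s$-orbit of $\La_{t,t/Q}$ is the $g$-image of the $h_s$-orbit of $\La_{1,1/Q}$, and hitting times of the transversal $\Om$ get rescaled by $1/t^2$ but occur in the same combinatorial order. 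Thus the sequence of tiles visited is forced to be the same as long as the scaled segment still fits inside the strip $V_1$ of horizontally-short lattices --- which is exactly guaranteed by the constraint $t > \tfrac{Q}{Q+1}$ (for $t$ in this range, $g_{t_Q}T_Q$ scaled by $t$ still has its relevant primitive vectors with horizontal component $\le 1$, so no new vectors enter or leave the strip). I would make this precise by showing: for $t\in\left(\tfrac{Q}{Q+1},1\right]$ and $0\le i\le N(Q)$, the lattice $h_{Q^2\gamma_i/t^2}\,\La_{t,t/Q}$ equals $\La_{t q_i/Q,\,t q_{i+1}/Q}$, directly paralleling the computation in the proof of Lemma~\ref{lem:farey:key}, and then $\kappa\left(t q_i/Q, t q_{i+1}/Q\right) = \left\lfloor \tfrac{1 + t q_i/Q}{t q_{i+1}/Q}\right\rfloor$, which I claim equals $\kappa(q_i/Q, q_{i+1}/Q) = \left\lfloor\tfrac{1+q_i/Q}{q_{i+1}/Q}\right\rfloor$ for all $t$ in the given range.

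The heart of the matter, and the step I expect to be the main obstacle, is verifying that the floor function is genuinely insensitive to $t$ throughout the range $\left(\tfrac{Q}{Q+1},1\right]$ --- i.e. that $\left\lfloor\tfrac{1+tq_i/Q}{tq_{i+1}/Q}\right\rfloor = \left\lfloor\tfrac{1+q_i/Q}{q_{i+1}/Q}\right\rfloor$. Writing $x = q_i/Q$, $y = q_{i+1}/Q$ (so $x,y\in(0,1]$, $x+y>1$ by the Farey neighbor property $q_i+q_{i+1}>Q$), the quantity is $\tfrac{1+tx}{ty} = \tfrac{1}{ty} + \tfrac{x}{y}$, which is \emph{decreasing} in $t$; at $t=1$ it gives $\kappa(x,y)$, and I must show it does not cross the next integer down as $t$ decreases to $\tfrac{Q}{Q+1}$. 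Equivalently I need $\tfrac{1+tx}{ty} < \kappa(x,y)+1$ fails to... rather, I need $\left\lfloor\tfrac{1+tx}{ty}\right\rfloor \ge \kappa(x,y)$, i.e. $\tfrac{1+tx}{ty}\ge \kappa(x,y)$ for $t > \tfrac{Q}{Q+1}$; since $\tfrac{1+x}{y} \ge \kappa(x,y)$ is an integer-part inequality that is not tight in general, and since the $q_i$ satisfy the determinant relation $q_{i+1}p_i - q_i p_{i+1} = \pm 1$, I would exploit $q_{i+1} = \kappa_i q_i - q_{i-1}$ (the Stern--Brocot recursion underlying~(\ref{eq:farey:bcz})) to pin down $\tfrac{1+q_i/Q}{q_{i+1}/Q}$ precisely as $\tfrac{Q+q_{i-1}}{q_{i+1}}$ type expression and bound how close it is to an integer, using $q_i + q_{i+1} > Q \ge q_i$ and $q_{i-1}+q_i > Q$. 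The bookkeeping relating $\kappa(T^i(\cdot))$ to the index $\nu(\gamma_i)$ and the neighbor denominators (as in \S\ref{subsubsec:Index}) is what makes this tractable; once the $t$-independence of the relevant floor is established for a single step, the induction closes, Proposition~\ref{prop:farey:period} follows since the period of the scaled orbit is the number of distinct iterates, which is unchanged, and then Theorem~\ref{theorem:structure:periodic} follows by applying this to all scalings of all Farey periodic orbits together with the conjugation relation~(\ref{eq:conj:period}).
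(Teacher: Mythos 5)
Your overall strategy is the same as the paper's: you correctly observe that $\kappa$ is constant on each tile $\Om_k$, that $T$ is linear on each tile, and that an induction on $i$ reduces the lemma to proving the single-step floor-function identity
$$\left\lfloor\frac{1+t\,q_i/Q}{t\,q_{i+1}/Q}\right\rfloor \;=\; \left\lfloor\frac{1+q_i/Q}{q_{i+1}/Q}\right\rfloor,\qquad t\in\left(\tfrac{Q}{Q+1},1\right],$$
which is exactly what the paper checks. However, your analysis of this floor identity has the critical inequality pointed in the wrong direction, and as a result the step you flag as ``the main obstacle'' is the one that is automatic, while the one you dismiss is the one carrying all the content.

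Concretely: writing the quantity as $\frac{Q/t+q_i}{q_{i+1}}$, it \emph{increases} as $t$ decreases from $1$ to $\tfrac{Q}{Q+1}$ (you correctly note it is decreasing in $t$, which is the same thing, but then say you must prevent it from ``crossing the next integer down,'' and conclude you need $\frac{1+tx}{ty}\ge\kappa(x,y)$). That lower bound is trivial: since $t\le 1$ we have $\frac{Q/t+q_i}{q_{i+1}}\ge\frac{Q+q_i}{q_{i+1}}\ge\left\lfloor\frac{Q+q_i}{q_{i+1}}\right\rfloor$. The genuine content is the \emph{upper} bound $\frac{Q/t+q_i}{q_{i+1}}<\kappa(x,y)+1$, which is the inequality you start to write and then abandon. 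This is where the constraint $t>\tfrac{Q}{Q+1}$ enters: it gives $Q/t<Q+1$, and then the Farey neighbor property $q_{i+1}+q_{i+2}>Q$ yields
$$\frac{Q/t+q_i}{q_{i+1}}<\frac{Q+1+q_i}{q_{i+1}}\le\frac{q_{i+1}+q_{i+2}+q_i}{q_{i+1}}=1+\left\lfloor\frac{Q+q_i}{q_{i+1}}\right\rfloor,$$
the last equality being the classical identity $q_i+q_{i+2}=\left\lfloor\frac{Q+q_i}{q_{i+1}}\right\rfloor q_{i+1}$ that you allude to via the Stern--Brocot recursion. So the raw ingredients in your sketch are the right ones, but as written the plan would have you trying to prove a vacuous inequality and leave the real one unaddressed; reversing the roles fixes the argument and gives precisely the paper's proof. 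The $g_t$-conjugation heuristic you open with is a fine guide, but note it is not used in the actual computation and would need additional work to turn into a rigorous argument about which tiles are visited.
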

\begin{proof} Since $T^i \left(1, \frac 1 Q\right) = \left(\frac{q_i}{Q}, \frac{q_{i+1}}{Q}\right)$, we need to show, for $t \in \left(\frac{Q}{Q+1}, 1\right]$
$$\kappa\left(\frac{q_i}{Q}, \frac{q_{i+1}}{Q}\right) = \kappa\left(\frac{tq_i}{Q}, \frac{tq_{i+1}}{Q}\right).$$ We have \begin{eqnarray}\label{eqnarray:index} \kappa\left(\frac{tq_i}{Q}, \frac{tq_{i+1}}{Q}\right) &=& \left\lfloor \frac{ \frac{Q}{t} + q_i}{q_{i+1}} \right\rfloor \\ \nonumber &\le& \frac{ \frac{Q}{t} + q_i}{q_{i+1}}\\ \nonumber &<& \frac{Q+1 + q_i}{q_{i+1}} \\ \nonumber & \le & \frac{q_{i+1} + q_{i+2} + q_i}{q_{i+1}} \\ \nonumber &=& 1 + \left \lfloor \frac{Q+q_i}{q_{i+1}}\right\rfloor \end{eqnarray}
where in the last line we are using the identity $$\frac{q_i + q_{i+2}}{q_{i+1}} = \left \lfloor \frac{Q+q_i}{q_{i+1}}\right\rfloor.$$ On the other hand, $$\frac{ \frac{Q}{a} + q_i}{q_{i+1}} \geq \frac{ Q + q_i}{q_{i+1}} \geq \left \lfloor \frac{ Q + q_i}{q_{i+1}} \right \rfloor.$$ Thus, $$ \left \lfloor \frac{ Q + q_i}{q_{i+1}} \right \rfloor \leq \kappa\left(\frac{tq_i}{Q}, \frac{tq_{i+1}}{Q}\right) < 1 + \left \lfloor \frac{Q+q_i}{q_{i+1}}\right\rfloor,$$ so $$\kappa\left(\frac{tq_i}{Q}, \frac{tq_{i+1}}{Q}\right) = \left \lfloor \frac{Q+q_i}{q_{i+1}}\right\rfloor = \kappa\left(\frac{q_i}{Q}, \frac{q_{i+1}}{Q}\right)$$as desired.
\end{proof}
\medskip
\noindent Now Proposition~\ref{prop:farey:period} follows from $P\left(1, \frac 1 Q\right) = N(Q)$.\qed\medskip

\noindent To prove Theorem~\ref{theorem:structure:periodic}, we observe that the point $\left(1, \frac{k}{l}\right)$ is contained in the periodic orbit corresponding to the point $\left(\frac{1}{l}, 1\right)$, since we can find consecutive Farey fractions $\gamma_i = \frac{a_i}{l}$ and $\gamma_{i+1}= \frac{a_{i+1}}{k}$ in $\F(l)$ (we are assuming that $\mbox{gcd}(k,l) = 1$). By Proposition~\ref{prop:farey:period}, for $a \in (\frac{l}{l+1}, 1]$, $$P\left(a, a\frac{k}{l}\right) = N(l).$$
If we put $a = \frac{l}{l+1}$, we have the point $\left(\frac{l}{l+1}, \frac{k}{l+1}\right)$, which is contained in the periodic orbit of the point $\left(\frac{1}{l+1} , 1\right)$, by similar reasoning as above.  Applying Proposition~\ref{prop:farey:period} to this point, we have that for $b \in \left(\frac{1}{l+1}, 1\right]$, $$P\left(\frac{b l}{l+1}, \frac{bk}{l+1}\right) = N(l+1).$$ Rewriting, we have that for $a \in \left(\frac{l}{l+2},\frac{l}{l+1}\right]$, $$P\left(t, t\frac{k}{l}\right) = N(l+1).$$ Continuing to reason in this fashion, we have that for $1 \le r \le k$, $t \in \left(\frac{l}{l+r}, \frac {l}{l+r-1}\right]$, $$P\left(t, t\frac{k}{l}\right) = N(l+r-1).$$ Finally, we need to calculate $A_{n} \left(t, t\frac{k}{l}\right),$ where $n = P\left(t, t\frac{k}{l}\right).$ Letting $s_0 = s\left(a, a\frac{k}{l}\right) = \frac{l^2}{a^2},$ we have, by a similar calculation to (\ref{eq:period:matrix}) that $$p_{t, t\frac{k}{l}}^{-1} h_{s_0} p_{t, t \frac k l} =  \left(\begin{array}{cc}1+kl & k^2 \\-l^2 & 1-kl\end{array}\right).$$ On the other hand, (\ref{eq:cocycle}) can be re-written as the conjugation $$p_{T(a,b)}^{-1} h_{R(a,b)} p_{a,b} = A(a,b)^T.$$ Iterating, we obtain, for any $m >0, (a,b) \in \Om$, $$p_{T^m(a,b)}^{-1} h_{R(a,b)} p_{a,b} = A_{m}(a,b)^T.$$ Applying this to $(a,b) = \left(t, t\frac k l\right)$ and $m = n$, we have $$\left(A_n(a,b)^-1\right)^T  = \left(\begin{array}{cc}1+kl & k^2 \\-l^2 & 1-kl\end{array}\right),$$ since $T^n\left(t, t\frac k l\right) = \left(t, t\frac k l\right)$. Inverting and taking transposes, we obtain, as desired  $$A_{P\left(t, t\frac{k}{l}\right)}\left(t, t\frac{k}{l}\right) = \left(\begin{array}{cc}1-kl & l^2 \\-k^2 & 1+kl\end{array}\right),$$ completing the proof.\qed\medskip

 \subsection{Segments and the shearing matrix}\label{subsec:segments} Note that the matrix  $$A_{P\left(a, a\frac{k}{l}\right)}\left(a, a\frac{k}{l}\right) = \left(\begin{array}{cc}1-kl & l^2 \\-k^2 & 1+kl\end{array}\right)$$ is parabolic, and fixes the segment $\left\{\left(a, a\frac{k}{l}\right): a \in \left(\frac{l}{l+k}, 1\right]\right\}.$ In fact, it \emph{shears} along this segment in the following fashion. Note that $$\frac{1}{k^2 + l^2} \left(\begin{array}{cc}l & k \\-k & l\end{array}\right)\left(\begin{array}{cc}1-kl & l^2 \\-k^2 & 1+kl\end{array}\right)\left(\begin{array}{cc}l & -k \\k & l\end{array}\right) = \left(\begin{array}{cc}1 & l^2 +k^2 \\0 & 1\end{array}\right).$$ Thus for points $(a,b)$ sufficiently near the segment, they will be sheared along the segment under the appropriate power of $T$ (which may vary depending on the period of the subsegment $(a,b)$ is close to). As it moves along the segment, the period will change. This is illustrated in Figure~\ref{fig:periodic:segment}, which shows the periodic segment $\left\{\left(a, a\frac 2 3\right): a \in \left(\frac 3 5, 1\right]\right\}$, which breaks up into two pieces, one of period $5$ and one of period $6$. The shearing matrix for this line is given by $$\left(\begin{array}{cc}-5 & 9 \\-4 & 7\end{array}\right),$$ which is conjugate to $$\left(\begin{array}{cc}1 & 13 \\0 & 1\end{array}\right).$$
 
\begin{figure}

\caption{The periodic segment $\left\{\left(a, a\frac 2 3\right): a \in \left(\frac 3 5, 1\right]\right\}$. The segment \textcolor{blue}{$a \in \left (\frac 3 4, 1\right)$} is in \textcolor{blue}{blue} and the segment \textcolor{red}{$a \in \left(\frac 3 5, \frac 3 4\right]$} is in \textcolor{red}{red}. The labels next to the segments correspond to the power of $T$. The \textcolor{red}{red} segment has period \textcolor{red}{$N(4) =6$}, and the \textcolor{blue}{blue} segment has period \textcolor{blue}{$N(3) =4$}.} \label{fig:periodic:segment}
\vspace{.2in}

\begin{tikzpicture}[scale=6]

\filldraw[fill=black!10!white] (1,0)node[right]{\tiny$(1,0)$}--(1,1)node[right]{\tiny$(1,1)$}--(0,1)node[above]{\tiny$(0,1)$}--cycle;

\draw[blue] (1, 2/3)-- (3/4, 1/2); 
\path (7/8, 7/12)node[right, blue]{\tiny $0$};

\draw[dashed] (1/3, 2/3)-- (1, 1);

\draw[dashed] (1/2, 1/2)-- (1, 2/3);

\draw[dashed] (3/5, 2/5)-- (1, 1/2);

\draw[dashed] (2/3, 1/3)-- (1, 2/5);

\draw[dashed] (5/7, 2/7)-- (1, 2/6);
\draw[dashed] (3/4, 1/4)-- (1, 2/7);
\draw[dashed] (7/9, 2/9)-- (1, 2/8);
\draw[dashed] (8/10, 2/10)-- (1, 2/9);

\draw[cm={0,-1,1,3,(0,0)},blue]	(1,2/3)--(3/4, 1/2);
\path (7/12, 7/8)node[below, blue]{\tiny $1$};

\draw (1,0)--(0,0)node[below]{\tiny$(0,0)$}--(0,1);

\draw[blue] (3/4, 1/4)-- (1, 1/3); 
\path (7/8, 7/24)node[right, blue]{\tiny $2$};

\draw[blue] (1/3, 3/4)--(1/4, 1);

\path (7/24, 7/8)node[below right, blue]{\tiny $3$};

\draw[red] (3/4,1/2)--(3/5,2/5);
\path (27/40, 9/20)node[below, red]{\tiny $0$};

\draw[red] (1/2,3/4)--(2/5,3/5);
 \path (9/20, 27/40)node[below, red]{\tiny $1$};
 
 \draw[red] (3/4, 1)--(3/5,4/5);
 \path (27/40, 9/10)node[below, red]{\tiny $2$};
 
  \draw[red] (1,1/4)--(4/5,1/5);
 \path (9/10, 9/40)node[below, red]{\tiny $3$};

  \draw[red] (1/4,1)--(1/5, 4/5);
 \path (9/40, 9/10)node[right, red]{\tiny $4$};
 
   \draw[red] (1,3/4)--(4/5,3/5);
 \path (9/10, 27/40)node[below, red]{\tiny $5$};


 \end{tikzpicture}
 \end{figure}
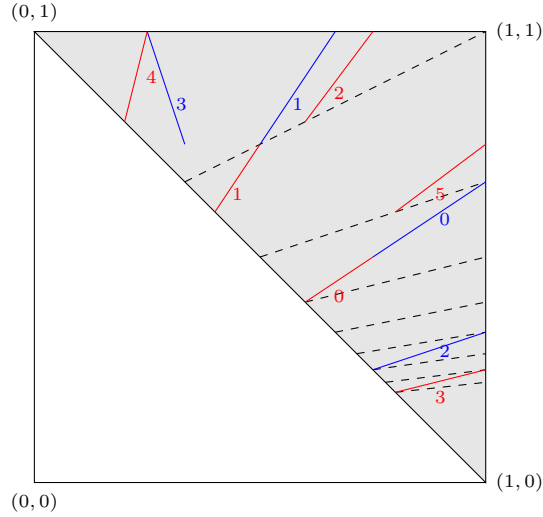

\subsection{Hierarchies of periodic orbits}\label{subsec:hierarchies:periodic} Theorem~\ref{theorem:structure:periodic} has a nice geometric explanation, which we describe informally: If we start with the points  $(a,a) \in \Om$ of period $1$ under $T$, and move down the line, we hit $\left(\frac{1}{2}, \frac{1}{2}\right)$, which is not in $\Om$. However, we can identify $\La_{\frac{1}{2},\frac{1}{2}}$ with $\La_{\frac{1}{2}, 1}$, since, in general, we have $$\La_{a, 1-a} = \La_{a,1}.$$ The orbit of $\left(\frac{1}{2}, 1\right)$ has period $N(2) = 2$, and continuing to move down the line, Proposition~\ref{prop:farey:period} shows that for $a \in (2/3, 1]$, $$P\left(\frac{a}{2}, a\right) = N(2).$$ At $a = \frac{2}{3}$, we identify $\left(\frac 1 3, \frac 2 3\right)$ with the point $\left(\frac{1}{3}, 1\right)$, which has period $N(3) = 5.$ Continuing this process, we obtain all the Farey periodic orbits $\left(\frac{1}{Q}, 1\right)$ and the scaling segments $$\left\{\left(\frac{t}{Q}, t\right): t \in \left(\frac{Q}{Q+1}, 1\right]\right\}.$$ The set of $T$-periodic points in $\Om$ is the union $$ \bigcup_{Q \in \N} \bigcup_{1 \le i \le N(Q)} \left\{T^i \left(\frac{t}{Q}, t\right): t \in \left(\frac{Q}{Q+1}, 1\right]\right\}.$$

\subsection{Hyperbolic Geometry of Periodic Orbits}

What we have done in $X_2$ is to push the cuspidal horocycle correpsonding to $\La_{a,a}$, $a >1$ down using $g_t$.  For $a>1$, the lattice $\La_{a, a}$ is vertically short, and the $h_s$ trajectory is an embedded loop in $X_2$ which does not intersect $\Om$. For $1 >a > \frac{1}{2}$, the $h_s$-orbit it intersects our transversal once, on the point $\La_{a, a}$, which is a fixed point for the BCZ map $T$. As we push it down, the $h_s$ trajectory intersects the transversal more and more times. However, it does not simply pick up one intersection at a time, but rather $N(Q+1) -N(Q) = \varphi(Q+1)$ intersections when we transition from $\left(\frac{1}{Q+1}, \frac{Q}{Q+1}\right)$ to $\left(\frac{1}{Q+1}, 1\right)$.

\section{Equidistribution of periodic orbits}\label{subsec:equidist}

\noindent In this section, we prove Theorem~\ref{theorem:equidist:periodic}, which is the key result for our number theoretic applications. This is essentially a direct consequence of the well-known equidistribution principle for closed horocycles on $X_2$, originally due to Sarnak~\cite{Sarnak} and reproved by Eskin-McMullen~\cite{EM} using ergodic theoretic techniques.  In our notation, and in terms of the map $T$, this result can be reformulated as follows. Recall that given a (non-empty) interval  $I = [\alpha, \beta] \subset [0, 1]$, and $Q > 1$, we defined $\rho_{Q, I}$ as the probability measure supported on (a long piece of) the orbit of $(\frac{1}{Q}, 1)$. That is, setting $N_I(Q): = |\F(Q) \cap I|$, we had $$\rho_{Q, I} = \frac{1}{N_I(Q)}\sum_{i: \gamma_i \in I} \delta_{T^i( \frac{1}{Q}, 1)}.$$ Define the measure $\rho^{R}_{Q, I}$ on $X_2$ by setting $$d\rho^R_{Q, I} = d\rho_{Q, I} dt,$$ where we are identifying $X_2$ with the suspension space over $\Om$.

\begin{Theorem}\label{theorem:sarnak}[\cite{Sarnak},\cite{EM},\cite{Hejhal2}] As $Q \rightarrow \infty$, $$\rho^R_{Q, I} \rightarrow \mu_2,$$ where the convergence is taken in the weak-* topology.\end{Theorem}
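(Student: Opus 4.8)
The plan is to deduce Theorem~\ref{theorem:sarnak} from the classical equidistribution of long closed horocycles on $X_2$, using the dictionary established in \S\ref{subsec:fareyperiod}. Recall from Lemma~\ref{lem:farey:key} that the orbit $\{T^i(\tfrac1Q,1)\}_{i=0}^{N(Q)-1}$ corresponds to the lattice $g_{t_Q}\Z^2 = p_{Q^{-1},1}\Z^2$, which is $h_s$-periodic of period $Q^2$, and that the return times sum to $Q^2$; geometrically this periodic $h_s$-orbit is the closed horocycle at height $Q^{-2}$ (in the hyperbolic picture of $C_1$, the horizontal segment at $y=Q^{-2}$ after pushing down from the cusp). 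Sarnak's theorem \cite{Sarnak} (reproved by \cite{EM}, with rates by \cite{Hejhal2}) states that as $Q\to\infty$ the normalized arclength measure on this closed horocycle of length $Q^2$ converges weak-$*$ to $\mu_2$.

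The first step is to identify $\rho^R_{Q,I}$, for $I=[0,1]$, with (a reparametrization of) normalized arclength measure on this closed horocycle. By construction of the suspension space, the measure $d\rho_{Q,I}\,dt$ places, over each point $T^i(\tfrac1Q,1)$ of the discrete orbit, an interval of length $R(T^i(\tfrac1Q,1))$ in the flow direction; concatenating these over $i=0,\dots,N(Q)-1$ and using $\sum_i R\circ T^i(\tfrac1Q,1)=Q^2$ exactly reconstitutes the full $h_s$-orbit segment of length $Q^2$, i.e.\ the closed horocycle, with Lebesgue measure along it. Normalizing: $\rho^R_Q = \frac{1}{N(Q)}\sum_i \delta_{T^i(\tfrac1Q,1)}\,dt$ has total mass $\frac{1}{N(Q)}\sum_i R\circ T^i(\tfrac1Q,1) = \frac{Q^2}{N(Q)} \to \frac{\pi^2}{3}$ by the classical asymptotic $N(Q)\sim \frac{3}{\pi^2}Q^2$. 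So $\frac{\pi^2}{3}\,(\text{arclength prob.\ measure on the length-}Q^2\text{ horocycle})$ and $\rho^R_Q$ agree up to the discrepancy between $\frac{Q^2}{N(Q)}$ and $\frac{\pi^2}{3}$, which vanishes; thus $\rho^R_Q \to \frac{\pi^2}{3}\cdot\frac{1}{\pi^2/3}\mu_2$... more precisely, normalizing $\mu_2$ to be a probability measure, $\rho^R_Q \to \mu_2$ follows from Sarnak's theorem together with $Q^2/N(Q)\to\pi^2/3 = \mathrm{vol}(X_2)$ and the matching normalization of $\mu_2$. One must be slightly careful that $\rho^R_Q$ as defined is a probability measure (it is, since $d\rho_{Q,I}$ is a probability measure on $\Om$ and we integrate $dt$ over $[0,R)$ fibers of total $\mu$-average... here one uses $\|R\|_{1,\rho_{Q}} = Q^2/N(Q)$); so in fact one should compare $\rho^R_Q$ against $\nu_{R,T}$-type normalization and invoke Theorem~\ref{theorem:sarnak}'s sources directly.

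The second step handles the interval $I=[\alpha,\beta]\subsetneq[0,1]$. Here $\rho^R_{Q,I}$ is supported on the sub-orbit $\{T^i(\tfrac1Q,1): \gamma_i\in I\}$, which under the dictionary corresponds to the $h_s$-orbit segment $\{h_s\, g_{t_Q}\Z^2 : s\in [Q^2\alpha, Q^2\beta]\}$ — a piece of the closed horocycle of proportional length $(\beta-\alpha)Q^2$, reflecting that the $i$ with $\gamma_i\le x$ enter at flow-time $Q^2\gamma_i$. The equidistribution of \emph{pieces} of closed horocycles of length $\to\infty$ (not just full ones) is also classical and follows from Sarnak's / Eskin--McMullen's method, or can be obtained by a standard sandwiching argument: approximate the indicator of $[\alpha,\beta]$ from above and below by continuous bump functions, use that the full horocycle equidistributes against each such weight (a slight generalization of the length-$Q^2$ case, since multiplying the arclength measure by a slowly varying weight along the horocycle still equidistributes), and let the bumps converge to the indicator. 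The normalization again works out via $N_I(Q)\sim |I|\frac{3}{\pi^2}Q^2$ and $\int_{[\alpha,\beta]} Q^2\,d(\text{arclength}) = (\beta-\alpha)Q^2$.

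The main obstacle I anticipate is purely a bookkeeping/normalization issue rather than a deep one: verifying that the concatenation of flow-fibers of varying heights $R(T^i(\tfrac1Q,1))$ over the discrete orbit points reassembles \emph{exactly} to arclength measure on the closed horocycle, with the correct total mass, and that $\rho^R_{Q,I}$ as defined in the excerpt is genuinely a probability measure whose pushforward under the suspension isomorphism is the (renormalized) arclength measure. Once that identification is pinned down, Theorem~\ref{theorem:sarnak} is just a restatement of \cite{Sarnak, EM, Hejhal2} in the BCZ-coordinates; the only genuine input beyond the dictionary of Lemma~\ref{lem:farey:key} is the equidistribution of growing \emph{sub-arcs} of closed horocycles, for which we cite the same references (the sandwiching argument reduces it to the full-horocycle statement).
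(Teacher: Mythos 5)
Your approach matches the paper's: Theorem~\ref{theorem:sarnak} is stated there with citations to \cite{Sarnak}, \cite{EM}, \cite{Hejhal2} and a remark that the sub-arc case $0<\alpha<\beta<1$ follows from the same Eskin--McMullen argument (outlined in \cite{Hejhal2}), and the ``proof'' consists precisely of the dictionary you spell out between $\rho^R_{Q,I}$ and arclength measure on a piece of a closed horocycle of length $Q^2|I|$, so your sandwiching reduction is a fine alternative justification of the sub-arc step. One correction: $\rho^R_{Q,I}$ is \emph{not} a probability measure (its total mass $\int_\Omega R\,d\rho_{Q,I} = Q^2|I|/N_I(Q)\to\pi^2/3$), but neither is the paper's $\mu_2$, which is the finite Haar measure of total mass $\pi^2/3 = \mathrm{vol}(X_2)$; so the masses do match and the mid-proposal assertion that $\rho^R_Q$ is a probability measure should simply be deleted rather than hedged.
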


\noindent\textit{Remark.} In \cite{Sarnak} and \cite{EM} the result is only established for the case $\alpha=0, \beta=1$, although it is clear that the same arguments in \cite{EM} work for the case of fixed $0<\alpha<\beta<1$.  A proof of this was outlined in \cite{Hejhal2}.  Stronger results where the difference $\beta-\alpha$ is permitted to tend to zero with $Q$ have been obtained by Hejhal~\cite{Hejhal3} and Str\"ombergsson~\cite{Strombergsson}.  
\medskip

Let $\pi$ be the projection map from the suspension onto $\Omega$.  Note that $\pi$ is continuous except for a set of measure zero with respect to the measures $\rho^R_{Q,I}$ and $\mu_2$.  Thus, we have as $Q\to\infty$, 
$$ \rho_{Q,I} = \frac{1}{R}\pi_*\rho^R_{Q,I} \to \frac{1}{R}\pi_*\mu_2 = m,$$  
which completes the proof of Theorem 3.1.

\subsection{Farey Statistics}\label{sec:farey} Our results on the statistics of Farey fractions are all immediate corollaries of Theorem~\ref{theorem:equidist:periodic}, being statements of the form \begin{equation}\label{eq:G}\int_{\Om} G d\rho_{Q,I} \rightarrow \int_{\Om} G dm\end{equation} for appropriate choices of functions $G$ on $\Om$. In the applications to spacings and $h$-spacings, the function $G$ is in fact an indicator function, and so the convergence is immediate. In the setting of indices and moments, the functions $G$ are $L^1(m)$ functions, we need the following lemma.  

\begin{lemma}\label{lem:L1}
For $Q\in\N$, set $m_Q=\rho_{Q,I}$ and let $m_\infty=m$.  
Let $W$ be the set of measurable functions $G:\Omega\to\R$ satisfying 
\begin{equation}\label{ieq:upperbound}
  \|G\|_W := \sup_{Q\in\N\cup\{\infty\}} \int_\Omega |G| dm_Q < \infty 
\end{equation}
  where two functions in $W$ are identified if their difference has zero norm.  
Let $C_0(\Omega)$ be the space of continuous functions vanishing at infinity equipped 
  with the uniform norm and let $W_0$ be the closure of its image in $W$.  
Then (\ref{eq:G}) holds for any $G\in W_0$.  
\end{lemma}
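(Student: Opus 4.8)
The plan is to prove Lemma~\ref{lem:L1} by a standard density-plus-uniform-integrability argument: approximate an arbitrary $G \in W_0$ by continuous compactly supported functions, control the error uniformly in $Q$ using the defining bound~(\ref{ieq:upperbound}), and invoke Theorem~\ref{theorem:sarnak} (equivalently, Theorem~\ref{theorem:equidist:periodic}) for the continuous approximants.

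First I would fix $G \in W_0$ and $\epsilon > 0$. By definition of $W_0$ as the closure of the image of $C_0(\Omega)$ in $W$, there is a $\varphi \in C_0(\Omega)$ with $\|G - \varphi\|_W < \epsilon$; that is, $\int_\Omega |G - \varphi|\, dm_Q < \epsilon$ for \emph{every} $Q \in \N \cup \{\infty\}$. This is the crucial uniformity: it simultaneously bounds the tails of $G$ against all the measures $\rho_{Q,I}$ and against $m$. Then I would split, for each $Q$,
\begin{equation*}
\left| \int_\Omega G\, dm_Q - \int_\Omega G\, dm \right| \le \int_\Omega |G - \varphi|\, dm_Q + \left| \int_\Omega \varphi\, dm_Q - \int_\Omega \varphi\, dm \right| + \int_\Omega |\varphi - G|\, dm.
\end{equation*}
The first and third terms are each $< \epsilon$ by the choice of $\varphi$. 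For the middle term, since $\varphi \in C_0(\Omega)$ and $m_Q = \rho_{Q,I} \to m = m_\infty$ in the weak-$*$ topology by Theorem~\ref{theorem:equidist:periodic} (using that $C_0(\Omega)$ is exactly the natural test space for weak-$*$ convergence on the locally compact space $\Omega$), there is a $Q_0$ so that for all $Q > Q_0$ this term is $< \epsilon$. Hence $\limsup_{Q \to \infty} |\int G\, dm_Q - \int G\, dm| \le 3\epsilon$, and letting $\epsilon \to 0$ gives~(\ref{eq:G}).

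The main subtlety — and the step I would be most careful about — is making sure the weak-$*$ convergence $\rho_{Q,I} \to m$ is genuinely available against $C_0(\Omega)$ test functions rather than merely against $C_c(\Omega)$ or bounded continuous functions, and that it is consistent with the statement being \emph{deduced} here rather than circularly assumed. Since Theorem~\ref{theorem:equidist:periodic} was already established (via the projection argument following Theorem~\ref{theorem:sarnak}, which produces $\pi_* \mu_2 / R = m$ and convergence tested against continuous functions on $\Omega$, and $\Omega$ has a single cusp so $C_0(\Omega)$ convergence follows from mass non-escape, which is automatic as all $\rho_{Q,I}$ and $m$ are probability measures), I can take this as input. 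One should also note $\varphi \in C_0(\Omega) \subset L^\infty$, so $\int |\varphi - G|\, dm_Q$ and the analogous $m$-integral are finite and well-defined, and that the identification of functions in $W$ with zero norm is harmless since it does not affect any of the integrals appearing. No integrability hypothesis beyond membership in $W_0$ is needed because~(\ref{ieq:upperbound}) already encodes uniform $L^1$-control, which is precisely the uniform-integrability-type condition that upgrades weak-$*$ convergence on bounded functions to convergence on the enlarged class $W_0$.
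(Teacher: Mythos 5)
Your argument is correct and is essentially the same as the paper's: both pick a $C_0(\Omega)$ approximant $\varphi$ with $\|G-\varphi\|_W$ small, use the $\sup$ in~(\ref{ieq:upperbound}) to control $\int|G-\varphi|\,dm_Q$ uniformly in $Q\in\N\cup\{\infty\}$, apply Theorem~\ref{theorem:equidist:periodic} to the middle term $|m_Q(\varphi)-m(\varphi)|$, and close with the triangle inequality. The paper dresses this up slightly with the inclusion map $\eta:C_0(\Omega)\to W$ and the induced dual map, but the computation is the same three-term split you wrote down; your additional remark that mass non-escape (all the $m_Q$ and $m$ being probability measures) is what upgrades vague convergence to convergence against $C_0(\Omega)$ test functions is a worthwhile clarification that the paper leaves implicit.
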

\begin{proof}
Let $\eta:C_0(\Omega)\to W$ be the natural inclusion map, which we note is a continuous injection 
  that fails to be an embedding.  It induces a map $\eta^*:W^*\to M(\Omega)$ between the dual $W^*$ 
  with the weak-$^*$ topology and the space $M(\Omega)$ of Radon measures on $\Omega$.  
Each $m_Q$ determines a linear map $\eta(C_0(\Omega))\to\R$ that is readily seen to be 
  continuous even with the subspace topology on $\eta(C_0(\Omega))$ coming from $W$.  
This linear map extends uniquely to a linear map $W_0\to\R$ of norm at most one.  
Given $\eps>0$ and $G\in W_0$, there is sequence $G_k\in C_0(\Omega)$ converging to $G$.  
Choose $k$ so that $\|G_k-G\|<\eps/3$, then choose $Q_0$ such that for all $Q>Q_0$ we have 
  $\left| m_Q(G_k)-m(G_k) \right| < \eps/3$.  The triangle inequality now gives 
  $$| m_Q(G)-m(G) | \le 2\|G-G_k\| + | m_Q(G_k) - m(G_k) | < \eps.  $$
This shows that (\ref{eq:G}) holds.  
\end{proof}

\noindent Below, we indicate which functions go with which corollaries, and show how to verify the condition (\ref{ieq:upperbound}) in these settings. 

\subsubsection{Spacings and $h$-spacings}\label{subsec:spacing} Corollary~\ref{cor:hall} follows from applying (\ref{eq:G}) to the indicator function of the set $R^{-1}\left(\left[\frac{3}{\pi^2 |I|}c,\frac{3}{\pi^2 |I|}d\right]\right)$. Similarly, Corollary~\ref{cor:ABCZ} follows from applying (\ref{eq:G}) to the indicator function of the set $R_h^{-1}(\tilde{B})$. SInce indicator functions are bounded, (\ref{ieq:upperbound}) holds immediately. \qed

\subsubsection{Indices}\label{subsec:index} To verify (\ref{ieq:upperbound}) for $\kappa^{\alpha}$, we need:

\begin{lemma}[\cite{BCZ2}, Lemma 3.4] Let $r \in \N$, $n \geq 4r + 2$. Suppose $\kappa(a,b) = n$. Then
\begin{enumerate}
\item For $i = \pm 1$, $\kappa (T^i(a,b)) = 1$
\item For $1 < |i| \le r$, $\kappa (T^i(a,b)) = 2$
\end{enumerate}
\end{lemma}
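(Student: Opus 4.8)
The plan is to translate the statement about $\kappa(T^i(a,b))$ into a statement about Farey-type neighbors and reduce to an elementary counting argument on the continued-fraction-like recursion governing the BCZ map. Recall from (\ref{eq:bcz}) and \S\ref{subsubsec:tiles} that $T(a,b) = (b, -a + \kappa(a,b)b)$ where $\kappa(a,b) = \lfloor (1+a)/b\rfloor$, so the second coordinate of $T(a,b)$ lies in $(0, b]$ (this is how $T$ maps $\Om$ to itself). Writing $(a_0,b_0) = (a,b)$ and $(a_i, b_i) = T^i(a,b)$, we have $b_i = a_{i+1}$ and the recursion $a_{i+2} = -a_i + \kappa(a_i,b_i) a_{i+1}$, i.e. $a_i + a_{i+2} = \kappa_i\, a_{i+1}$ where $\kappa_i := \kappa(T^i(a,b))$. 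So the sequence $(a_i)$ satisfies a second-order linear recursion with variable (positive integer) coefficients $\kappa_i$, and $0 < a_{i+1} \le a_i$ fails in general — rather, the relevant monotonicity is the one forcing each $a_i \in (0,1]$ with $a_i + a_{i+1} > 1$ (the defining inequality of $\Om$, preserved by $T$).

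\textbf{Key steps.} First I would record the constraint coming from $(a_i, a_{i+1}) \in \Om$: namely $a_i, a_{i+1} \in (0,1]$ and $a_i + a_{i+1} > 1$. The hypothesis $\kappa(a,b) = \kappa_0 = n \ge 4r+2$ means $a_0 + a_2 = n\, a_1$ with $a_0, a_2 \le 1$, hence $a_1 = (a_0 + a_2)/n \le 2/n$ is very small; combined with $a_0 + a_1 > 1$ this forces $a_0 > 1 - 2/n$, i.e. $a_0$ is very close to $1$. Step two: compute $\kappa_{-1}$ and $\kappa_1$. Since $a_1$ is tiny and $a_2 \in (0,1]$ with $a_1 + a_2 > 1$, we get $a_2 > 1 - 2/n$, so $a_2$ is also close to $1$; then $\kappa_1 = \lfloor (a_1 + a_3)/a_2 \rfloor$ with $a_1 + a_3 \le a_1 + 1 < 1 + 2/n < 2(1-2/n) \le 2a_2$ and $a_1 + a_3 \ge a_3 > $ (lower bound from $a_2 + a_3 > 1$, giving $a_3 > 1 - 1 = 0$, not enough) — so I need the sharper bound $a_1 + a_3 \ge a_2 + a_3 - a_1 + (a_1 - a_2)$... more carefully: from $a_1 + a_3 \ge a_3$ and $a_2 + a_3 > 1$ with $a_2 \le 1$ we only get $a_3 > 0$, but we also have $a_1 + a_3 < 2a_2$ from above hence $\kappa_1 \le 1$, and $a_1 + a_3 > a_2 \cdot 1$? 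That requires $a_3 > a_2 - a_1$, which follows because $a_2 + a_3 > 1 \ge a_2 + a_1$ gives $a_3 > a_1$, not quite $a_3 > a_2 - a_1$. I would instead argue: since $\kappa_1 \ge 1$ always (as $a_i + a_{i+2} \ge a_{i+1}$ is forced by $(a_{i+1}, a_{i+2}) \in \bar\Om$, because $a_{i+1} + a_{i+2} > 1 \ge a_{i+1}$ combined with $a_i > 0$... actually $a_i + a_{i+2} \ge a_{i+1}$ iff $\kappa_i \ge 1$, which holds since $\kappa_i$ is a positive integer by construction). So $\kappa_1 = 1$, and symmetrically $\kappa_{-1} = 1$; this gives (1). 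For (2), I iterate: $\kappa_1 = 1$ gives $a_3 = a_2 - a_1$, so $a_3$ is close to $1$ (within $2/n$), and $a_3 + a_4 > 1$ forces $a_4$ tiny again only if... no — $\kappa_2 = \lfloor(a_2 + a_4)/a_3\rfloor$ with $a_2 + a_4 \le 2$ and $a_3 > 1 - 2/n$, so $\kappa_2 \le 2/(1-2/n) < 3$ for $n$ large, and $\kappa_2 \ge 2$ needs $a_2 + a_4 \ge 2a_3$, i.e. $a_4 \ge 2a_3 - a_2 \approx a_3$; this is where I track the slow drift: each step with $\kappa = 1$ decreases the "large" coordinate by the "small" coordinate $a_1 \approx$ something of size $\le 2/n$, and it takes about $r$ steps before the accumulated drift $\sim r\cdot(2/n) \le r \cdot 2/(4r+2) < 1/2$ stays bounded away from making $\kappa$ jump. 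Precisely I would show by induction that for $1 < |i| \le r$, the pair $(a_i, a_{i+1})$ (with appropriate orientation) has one coordinate in $(1 - \tfrac{2r}{n}, 1]$ and the other of size $O(1/n)$, forcing $\kappa_i = 2$ via the inequality $2a_{\text{large}} \le a_{\text{large}} + a' \le$ (something) $< 3 a_{\text{large}}$, using $n \ge 4r+2$ to keep $2r/n \le 1/2$.

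\textbf{Main obstacle.} The technical heart is the inductive bookkeeping in part (2): I must show the "large" coordinate stays in an interval $(1 - c/n, 1]$ with $c \le 2r$ throughout $|i| \le r$ steps, and that the "small" coordinate stays small enough that $\kappa_i$ is pinned to exactly $2$ (not $1$, not $3$). The subtlety is that after the first step $\kappa_0 = n$, subsequent $\kappa_i$ being $1$ or $2$ causes the coordinates to drift, and I need the threshold $n \ge 4r+2$ to be exactly what prevents the drift from pushing $\kappa_i$ out of $\{1,2\}$ or $\{2\}$ within the window. Since the lemma is quoted from \cite{BCZ2}, I would likely just cite it; but if a self-contained argument is wanted, the above recursion analysis — essentially studying the BCZ orbit near the "cusp point" $(1,0)$ as a perturbation of the fixed behavior — is the route, and carefully choosing the inductive hypothesis (an explicit two-sided bound on both coordinates in terms of $i$ and $n$) is the delicate part.
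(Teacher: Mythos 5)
The paper does not prove this lemma: it is quoted verbatim from Boca--Cobeli--Zaharescu \cite{BCZ2} and used as a black box to verify the uniform integrability bound (\ref{ieq:upperbound}) for $\kappa^\alpha$. You correctly flag this ("I would likely just cite it"), and citing is exactly what the paper does, so there is no internal proof to compare against.

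That said, your sketch of a self-contained argument is on the right track but has a few genuine soft spots worth naming. First, a bookkeeping point: you write $\kappa_1=\lfloor (a_1+a_3)/a_2\rfloor$. The floor formula is $\kappa_i=\lfloor(1+a_i)/a_{i+1}\rfloor$; the quantity $(a_i+a_{i+2})/a_{i+1}$ equals $\kappa_i$ \emph{exactly} (no floor needed) by the defining recursion. Conflating the two happens to give the right number but obscures which inequality you are actually proving. Second, the crude bounds $a_1\le 2/n$ and $a_2>1-2/n$ that you propose to carry through the induction are not sharp enough to reproduce the exact threshold $n\ge 4r+2$: they lose roughly one unit per step, giving something like $n\ge 4r+3$. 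To hit $4r+2$ one must use the exact relation $a_1=(a_0+a_2)/n$ together with the $\Om$-constraint $a_1+a_2>1$, which yields the sharper bound $a_2>(n-a_0)/(n+1)$; plugging this in, the decisive inequality for $\kappa_m=2$ ($2\le m\le r$) reduces to $n\ge 2m+(2m+1)a_0$, hence $n\ge 4m+1$, and one closes the induction at $m=r$ with $n\ge 4r+2$. Third, your argument treats forward and backward iterates symmetrically, but they are not: in the forward direction the constraints on $(a_0,a_1,a_2)$ directly control the orbit, whereas for $i<0$ the coordinate $a_{-j}$ is not a priori bounded away from $1$ by the same estimates. There the cleaner route is to rule out the wrong values of $\kappa_{-j}$ by contradiction: $\kappa_{-j}=1$ forces $a_{-j}=-a_1<0$, and $\kappa_{-j}\ge 3$ forces $a_{-j}=(k-1)a_0-(\,\cdot\,)a_1>1$ once $n\ge 4j+2$, both violating $(a_{-j},a_{-j+1})\in\Om$. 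So the mechanism is right and the threshold is indeed where you expect, but the inductive hypothesis must track exact expressions rather than the loose $O(1/n)$ bounds, and the negative-$i$ case needs the contradiction argument rather than a direct estimate.
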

\noindent That is, any large value $n$ of $\kappa$ must be followed by roughly $n/4$ small values. Furthermore, by the definition of $\kappa$, we can see that the maximum of $\kappa$ along the support of $m_Q$ is at most $2Q$, which is on the order of the square root of the length of the orbit $N_I(Q)$, since $$\kappa\left(\frac{q_i}{Q}, \frac{q_{i+1}}{Q} \right) = \left \lfloor \frac {1+ \frac{q_i}{Q}}{\frac{q_{i+1}}{Q}} \right \rfloor = \left \lfloor \frac{Q+q_i}{q_{i+1}} \right \rfloor \le \frac{2Q}{1} = 2Q.$$ Combining these two facts, we have that there is a constant $c = c_{\beta}$, depending on the interval $I$ and the power $\beta$ so that for any $0 < \beta < 2$,  and any $Q \in \N$, $$m_Q( \kappa > N) \le c_{\beta} N^{-\beta}.$$ Let $0 < \alpha<\beta < 2$. Then $$m_Q(\kappa^{\alpha}) = \sum_{N=1}^{\infty} m_Q\left(\kappa > N^{\frac {1}{\alpha}}\right) < c_{\beta}\sum_{N=1}^{\infty} N^{-\frac{\beta}{\alpha}} < c_{\beta} \zeta\left(\beta/\alpha\right).$$ Since $m(\kappa = k) = \frac{8}{k(k+1)(k+2)}$ for $k \geq 2$, we have $$m(\kappa^{\alpha}) < 8 \zeta(3-\alpha),$$ so we have verified (\ref{ieq:upperbound}) for $\kappa^{\alpha}$, proving Corollary~\ref{cor:BGZ}.
\qed

\subsubsection{Moments}\label{subsec:moments} Finally, to obtain Theorem~\ref{theorem:classic}, we apply (\ref{eq:G}) to the function $$G_{s,t}(a, b) = a^s b^t,$$ $t, s \in \C$, $\Re s, \Re t \geq  -1$. To verify (\ref{ieq:upperbound}), we use that for $(a, b) \in \Om$, $$|G_{s,t}(a, b)| < |G_{-1,-1}(a,b)|,$$ so it suffices to verify the condition in the case $(s, t) = (-1, -1)$ . We need the following: 

\begin{lemma}\label{lem:counting} 
For any interval $I \subset [0,1)$ there is a constant $c_1$ such that for any $Q$ with $N_I(Q) >0$, 
we have $$N_I(Q) \ge c_1 Q^2.$$ 
\end{lemma}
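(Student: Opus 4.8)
\noindent The plan is to establish the lower bound by an elementary sieve count that is effective for large $Q$, together with a trivial observation that disposes of the remaining small values of $Q$. Write $I=[\alpha,\beta]$ and $\delta=\beta-\alpha=|I|>0$. By definition $N_I(Q)=\sum_{q=1}^{Q}A_q$, where $A_q=\#\{p:\gcd(p,q)=1,\ \alpha q\le p\le\beta q\}$.

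First I would bound $A_q$ from below by M\"obius inversion. Since only squarefree divisors $d\mid q$ contribute, $A_q=\sum_{d\mid q}\mu(d)\bigl(\delta q/d+\epsilon_{d}\bigr)$ with $|\epsilon_d|\le 1$ uniformly, so $A_q\ge\delta q\sum_{d\mid q}\mu(d)/d-2^{\omega(q)}=\delta\,\varphi(q)-2^{\omega(q)}$. Summing over $q\le Q$ and invoking the classical estimates $\sum_{q\le Q}\varphi(q)=\tfrac{3}{\pi^{2}}Q^{2}+O(Q\log Q)$ and $\sum_{q\le Q}2^{\omega(q)}\le\sum_{q\le Q}\tau(q)=O(Q\log Q)$ produces absolute constants $c_{0},C_{0}>0$ and $Q_{1}$ with
$$N_I(Q)\ \ge\ c_{0}\,\delta\,Q^{2}-C_{0}\,Q\log Q\qquad(Q\ge Q_{1}).$$
Note that the error term does not involve $I$; this is exactly why the constant $c_1$ in the statement must be allowed to depend on $I$.

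Next I would choose $Q_{0}=Q_{0}(\delta)\ge Q_{1}$ large enough that $C_{0}Q\log Q\le\tfrac12 c_{0}\delta Q^{2}$ for all $Q\ge Q_{0}$ (possible since the left side is of smaller order in $Q$, with $Q_0$ of size roughly $\delta^{-1}\log\delta^{-1}$), which gives $N_I(Q)\ge\tfrac12 c_{0}\delta Q^{2}$ for $Q\ge Q_{0}$. For the finitely many $Q$ with $1\le Q<Q_{0}$ and $N_I(Q)>0$ one simply observes $N_I(Q)\ge 1> Q^{2}/Q_{0}^{2}$. Taking $c_{1}=\min\{\tfrac12 c_{0}\delta,\ Q_{0}^{-2}\}$ then yields $N_I(Q)\ge c_{1}Q^{2}$ for every $Q$ with $N_I(Q)>0$.

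There is no deep obstacle here; the one point that needs care is that the inequality is required for \emph{all} $Q$ with $N_I(Q)>0$, not merely for $Q$ large, which forces the separate treatment of the small-$Q$ range and prevents $c_1$ from being uniform over intervals of shrinking length. As an alternative to the sieve computation, one could instead cite the classical asymptotic $N_I(Q)\sim\tfrac{3|I|}{\pi^{2}}Q^{2}$ (equidistribution of Farey fractions, established independently of Theorem~\ref{theorem:classic}) and retain only the final large-$Q$/small-$Q$ dichotomy.
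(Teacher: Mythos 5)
Your argument is correct, and it takes a genuinely different route from the paper. The paper views $N_I(Q)$ as a count of primitive lattice points in the triangle $\{(x,y):\ y/x\in I,\ 0<x\le Q\}$ and splits into two cases: if $N_I(Q/2)\ge1$ it invokes a lower bound for primitive lattice points in convex regions containing a primitive point (Theorem~3 of~\cite{Ch2}, giving a constant times the area), and if $N_I(Q/2)=0$ it bounds $|I|$ above by the largest Farey gap at level $Q/2$, namely $2/Q$, so that $N_I(Q)\ge1\ge Q^2|I|^2/4$. You instead run an elementary M\"obius/sieve count to get $N_I(Q)\ge c_0|I|Q^2-C_0 Q\log Q$ with absolute constants, pick a threshold $Q_0=Q_0(|I|)$ past which the main term dominates, and clean up $Q<Q_0$ with the trivial bound $N_I(Q)\ge 1$. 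The trade-off: the paper's proof is shorter once one grants the lattice-point result from~\cite{Ch2}, and its dichotomy is parameterized intrinsically by the data (whether there is already a Farey point at level $Q/2$), while yours is self-contained and fully elementary at the cost of a somewhat longer computation and a threshold on $Q$ that has to be introduced by hand. Both correctly recognize that for the remaining small/degenerate range the only usable information is $N_I(Q)\ge1$, and both fold that in the same way. One minor presentational point: the paper's notation already sets $N_I(Q)=|\F(Q)\cap I|$ with $\F(Q)$ the reduced fractions of denominator $\le Q$, so your identity $N_I(Q)=\sum_{q\le Q}A_q$ with $A_q=\#\{p:\gcd(p,q)=1,\ \alpha q\le p\le\beta q\}$ is exactly consistent; no adjustment needed.
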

\begin{proof} $N_I(Q)$ counts the number of primitive lattice points in the triangle: $$\left\{(x,y) \in \R_+^2: \frac y x \in I, x \le Q\right\},$$ which has area $\frac{1}{2}Q^2|I|$.  
There are two cases.  If $N_{Q/2}(I)\ge1$, then Theorem~3 of \cite{Ch2} (with the sup norm) implies 
  $$N_Q(I) \ge \left(\frac{4}{27\pi}\right) \cdot \frac38Q^2|I| = \frac{|I|}{18\pi}Q^2.$$  
On the other hand, if $N_{Q/2}(I)=0$, the length of $I$ is bounded above by the largest 
gap in $F_{Q/2}$, which is at most $2/Q$, so that 
  $$N_Q(I) \ge 1 \ge \frac{Q^2|I|^2}{4}.$$  
Thus, the lemma follows with the choice $$c_1=\min\left(\frac{|I|}{18\pi},\frac{|I|^2}{4}\right).$$  
\end{proof}
\noindent We have $$m_Q(G_{-1,-1}) = \frac{1}{N_I(Q)} \sum_{\gamma_i \in \F_I(Q)} \frac{Q^2}{q_i q_{i+1}} = \frac{|I| Q^2}{N_I(Q)}.$$ Applying Lemma~\ref{lem:counting}, we have that $m_Q(G_{-1,1})$ is unfiormly bounded in $Q$, and we have that $m(G_{-1,1}) = \frac{\pi^2}{3}$, completing the proof.\qed

\subsubsection{Excursions}\label{subsubsec:excursions:proof} To prove Corollary~\ref{cor:farey:amusing}, we apply (\ref{eq:G}) to the functions $$G_{\max}(a,b) = \max\left\{a, b, \frac{1}{a+b}\right\},$$ and $$G_{\min} (a, b) = \min\left\{ \frac 1 a, \frac 1 b, a+b \right\}$$ respectively. Equation (\ref{ieq:upperbound}) is satisfied since $G_{\max}$ is bounded by $1$ and $G_{\min}$ is bounded by $2$.\qed

\section{Piecewise linear description of horocycles }\label{sec:plh}
\noindent Before proving Theorems~\ref{theorem:minima} and~\ref{theorem:maxima}, we record an elementary lemma on the behavior of the length of vectors under $h_s$, whose proof we leave to the reader.

\begin{lemma}\label{lem:PL}
Let $\vv = (x,y)^T\in\R^2$ be a vector with $x>0$.  
Let $\sigma^\pm=\sigma\pm|x|^{-1}$ where $\sigma=\frac{y}{x}$ is the slope of $\vv$.  
Then $f(s)=\|h_s\vv\|_{\sup}$ is the continuous piecewise linear function given by 
\begin{enumerate}
  \item[(i)] $f'(s)=-|x|$ for $s<\sigma^-$, 
  \item[(ii)] $f(s)=|x|$ for $\sigma^-\le s\le \sigma^+$, and 
  \item[(ii)] $f'(s)=+|x|$ for $s>\sigma^+$.  
\end{enumerate}
\end{lemma}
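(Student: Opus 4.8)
The statement is an elementary computation about how the supremum norm of a single vector evolves under the unipotent flow $h_s = \left(\begin{smallmatrix}1&0\\-s&1\end{smallmatrix}\right)$. The plan is to write out $h_s\vv$ explicitly, reduce to the case $x>0$ already assumed, and then analyze the two coordinates of $\|h_s\vv\|_{\sup} = \max\{|x|, |y-sx|\}$ directly.

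First I would compute $h_s\vv = (x, \, y - sx)^T$, so that the first coordinate is constant in $s$ and equal to $x>0$, while the second coordinate $g(s) := y - sx$ is an affine function of $s$ with slope $-x$. Hence $f(s) = \max\{x, |g(s)|\}$. The function $|g(s)|$ is the standard V-shaped graph: it decreases with slope $-x$ until $g(s)=0$ at $s = y/x = \sigma$, then increases with slope $+x$. So $f(s) = \max\{x, |g(s)|\}$ is governed entirely by comparing $|g(s)|$ with the constant $x$.

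The key step is then to identify the crossover points, i.e. where $|g(s)| = x$. Solving $|y-sx| = x$ (using $x>0$) gives $y - sx = \pm x$, i.e. $s = \frac{y}{x} \mp \frac{1}{x} = \sigma \mp |x|^{-1}$, which are exactly $\sigma^-$ and $\sigma^+$ in the notation of the lemma (note $\sigma^- = \sigma - |x|^{-1}$ is the smaller one since $|g|$ is decreasing there, and $\sigma^+ = \sigma + |x|^{-1}$ the larger). For $s < \sigma^-$ we have $|g(s)| > x$ and $g(s) > 0$, so $f(s) = g(s) = y - sx$ and $f'(s) = -x = -|x|$, giving (i). For $\sigma^- \le s \le \sigma^+$ we have $|g(s)| \le x$, so $f(s) = x = |x|$, giving (ii). For $s > \sigma^+$ we have $|g(s)| > x$ and $g(s) < 0$, so $f(s) = -g(s) = sx - y$ and $f'(s) = +x = +|x|$, giving (iii). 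Continuity of $f$ is automatic as the maximum of two continuous functions, and the matching of slopes at $\sigma^\pm$ is clear.

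There is essentially no obstacle here: the only thing to be careful about is bookkeeping the signs and the order of $\sigma^-$ and $\sigma^+$, and recognizing that since $x>0$ we may freely replace $x$ by $|x|$ in the slope expressions to match the stated form. The argument is a direct case analysis on the sign of $g(s)$ and the comparison $|g(s)|$ versus $x$, exactly as the paper indicates by leaving it to the reader.
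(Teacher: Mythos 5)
Your proof is correct and is precisely the direct computation the authors had in mind when they left the lemma ``to the reader'': compute $h_s\vv = (x, y-sx)^T$, note the first coordinate is constant, and do a three-case comparison of $|y-sx|$ against $x$, locating the crossover points $\sigma^\pm$ by solving $|y-sx|=x$. Since $x>0$ by hypothesis, replacing $x$ by $|x|$ in the slope expressions is harmless, and your sign bookkeeping is all correct.
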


\subsection{Minima and maxima}\label{subsec:minima}
Recall that Lemma~\ref{lem:short} states that $\Om$ consists of lattices with a short (length $\le 1$) horizontal vector. This observation shows that the visits of the trajectory $\{h_s \La\}_{s \geq 0}$ to the transversal $\Om$ are precisely the times $s_n(\La)$ of local minima of $\ell_{\La} (s)$, since a lattice cannot contain more than $1$ vector of supremum norm at most $1$, and under $h_s$, vectors are shorter when they are horizontal. On $\Om$, we have $$\ell\left(\La_{a,b}\right) = a.$$ By abuse of notation we write $\ell: \Om \rightarrow (0, 1]$ with $$\ell(a,b) = a.$$ Similarly we write $\alpha(a,b) = \frac{1}{a}.$ Thus when $h_{s_n} \La = \La_{a_n, b_n} $, the length of the shortest vector in $h_{s_n} \La$ is given by the horizontal vector $\left(\begin{array}{c}a_n \\0\end{array}\right)$. Setting $$s_1 (\La) = \min\{s \geq 0: h_s \La \in \Om\},$$ and writing $\La_{a_1, b_1} = h_{s_1}\La$, we have that 
$$l_N(\La) = \frac{1}{N}\sum_{n=1}^N \ell_{\La}(s_n) = \frac{1}{N} \sum_{n=1}^N \ell\left(T^{n-1}\left(a_1, b_1\right)\right),$$ and
$$a_N(\La): = \frac{1}{N}\sum_{n=1}^N \alpha_{\La}(s_n) =  \frac{1}{N}\sum_{n=1}^N  \alpha\left(T^{n-1}\left(a_1, b_1\right)\right).$$
Since both $\alpha$ and $\ell$ are in $L^1(\Om, dm)$, and we have assumed that $\La$ is not periodic under $h_s$ (and so, $(a_1, b_1)$ is not under $T$), we can apply the Birkhoff Ergodic Theorem (and the fact that all non-periodic orbits are generic for $m$, by the measure classification result) to conclude $$\lim_{N \rightarrow \infty} \frac{1}{N}\sum_{n=1}^N \ell_{\La}(s_n) = \lim_{N \rightarrow \infty} \frac{1}{N}\sum_{n=1}^N  \ell\left(T^{n-1}\left(a_1, b_1\right)\right) = \int_{\Om} \ell dm = \frac{2}{3}. $$ and
$$\lim_{N \rightarrow \infty} \frac{1}{N}\sum_{n=1}^N \alpha_{\La}(s_n)=  \lim_{N \rightarrow \infty} \frac{1}{N}\sum_{n=1}^N  \alpha\left(T^{n-1}\left(a_1, b_1\right)\right) = \int_{\Om} \alpha dm = 2 ,$$ proving Theorem~\ref{theorem:minima}.\qed\medskip


\subsection{Returns}\label{subsec:return:proofs} For the corresponding results for local \emph{maxima} $S_n(\La)$, we need to consider the `hand-off' between the basis vectors $(a, 0)^T$ and $(b, a^{-1})^T$ which happens after hitting the point $\La_{a,b}\in \Om$. This hand-off happens when the vectors $h_s  (a,0)^T= (a,-sa)^T$ and $h_s (b, a^{-1})^T = (b,-sb + a^{-1})^T$ have the same length. That is, we need to calculate the time $s \in (0, R(a,b))$ so that $$\max(a, sa) = \max(b, -sb + a^{-1}),$$ where we have taken absolute values of the coordinates (see Figure~\ref{fig:handoff}). A case-by-case analysis shows that the length of the vector at the hand-off is given by $f(x,y) = \max\left\{a, b, \frac{1}{a+b}\right\}$. By a similar analysis as above, we apply the ergodic theorem to this function (and to its reciprocal) to obtain Theorem~\ref{theorem:maxima}. \qed

\begin{figure}\caption{The hand-off between the vectors \textcolor{blue}{$(a, -sb)^T$} and \textcolor{red}{$(b, -sa+b^{-1})^T$}.}

\label{fig:handoff}

\begin{tikzpicture}[scale=2]

\draw (0, 2) node[left]{\tiny $a^{-1}$}--(0, 3/2) node[left]{\tiny $b^{-1}$}--(0, 1) node[left]{\tiny $1$}--(0, 2/3) node[left]{\tiny $b$}--(0,0)node[below]{\tiny $(0,0)$}--(1,0)node[below]{\tiny $1$}--(2,0)node[below]{\tiny $(a^{-1}b^{-1} -1)$}--(3,0)node[below]{\tiny $a^{-1}b^{-1}$};

\draw[blue] (0, 1/2) node[left, blue]{\tiny $a$}-- (1, 1/2)--(3,3/2)node[right, blue]{\tiny $\max(a, sa)$};

\draw[red] (0, 3/2) node[left, red]{\tiny $$}-- (2, 2/3)--(3, 2/3)node[right, red]{\tiny $\max(b, -sb +a^{-1})$};

\draw[dashed, red] (2, 2/3)--(2, 0);

\draw[dashed, red] (2, 2/3)--(0, 2/3);

\draw[dashed, blue] (1, 1/2)--(1, 0);

 \end{tikzpicture}
 \end{figure}
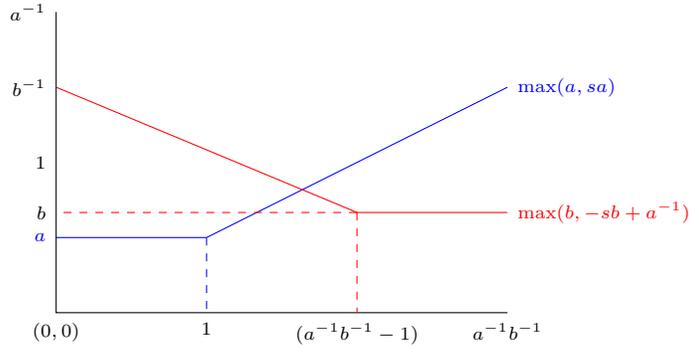

\section{Geometry of Numbers}\label{sec:geomnum} In this section we prove Theorem~\ref{theorem:generic:lattice} and Corollary~\ref{cor:periodic:lattice:gap}. 

\subsection{Slope gap distribution}\label{subsec:slopegap} We will prove Theorem~\ref{theorem:generic:lattice} by noting that the observations of \S\ref{sec:obs:slope} yield the following:

\begin{lemma}\label{lem:slope} Let $t>0$, and let $T_t$ denote the $t$-BCZ map. Let $\La$ denote a lattice without $t$-short vertical vectors, and let $\La_1 = h_{s_1} \La$ be the first intersection of the orbit $\{h_s\La\}_{s >0}$ with the transversal $\Om_t$. Let $0 \le c \le d \le \infty$. Let $\chi_{c, d}$ denote the indicator function of the set $$R^{-1}(c, d) = \left\{ (x,y) \in \Om_t: \frac 1 d < xy < \frac 1 c \right\}.$$ Then $$\frac{1}{N} | G_{N,t}(\La) \cap (c, d)| = \frac 1 N \sum_{i=0}^{N} \chi_{c, d} (T_t^n (\La_1)).$$ \end{lemma}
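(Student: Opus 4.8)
The plan is to transport the slope/hitting-time dictionary of \S\ref{sec:obs:slope} from the transversal $\Om$ to its $t$-rescaled version $\Om_t$, and to observe that under this dictionary consecutive slope gaps are exactly the return times of the $t$-BCZ map $T_t$. First I would record the $t$-analogue of Lemma~\ref{lem:short}: $\Om_t$ is the set of $t$-horizontally short lattices, which is (\ref{eq:t:transversal}). A nonzero vector $\vv = (x, sx)^T \in \La$ lies in the strip $V_t$ with slope $s$ --- i.e.\ $x \in (0, t]$ --- precisely when $h_s\vv = (x, 0)^T$ is a horizontal vector of $h_s\La$ of length $\le t$, that is, precisely when $h_s\La \in \Om_t$. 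Hence $S_t(\La)$ is exactly the increasing enumeration of the set of times $s > 0$ at which $\{h_s\La\}_{s>0}$ meets $\Om_t$. Under the stated hypothesis on vertical vectors, the $t$-analogue of Lemma~\ref{lemma:poincare} guarantees this set is nonempty, and the $t$-analogue of Lemma~\ref{lemma:roof}, whose computation gives return time $R_t(a,b) = \frac{1}{ab}$, shows it is discrete and unbounded, so we may write it as $s_1 < s_2 < \cdots$.

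Next I would match the orbit of $\La_1 = h_{s_1}\La$ under $T_t$ with this list of times. Since $T_t$ is the first return map of $h_s$ to $\Om_t$, we have $T_t^{\,n}(\La_1) = h_{s_{n+1}}\La$ for all $n \ge 0$, and the return time of $T_t$ at the point $T_t^{\,n}(\La_1) = h_{s_{n+1}}\La$ is the elapsed time until the next visit to $\Om_t$, namely $s_{n+2} - s_{n+1}$. Writing $T_t^{\,n}(\La_1) = \La_{a_n, b_n}$, this gives $s_{n+2} - s_{n+1} = R_t(a_n, b_n) = \frac{1}{a_n b_n}$. Since $R^{-1}(c,d) = \{(x,y) \in \Om_t : \frac{1}{d} < xy < \frac{1}{c}\} = \{(x,y) \in \Om_t : c < \frac{1}{xy} < d\}$ (with the conventions $\frac1\infty = 0$, $\frac10 = \infty$), we get $\chi_{c,d}(T_t^{\,n}(\La_1)) = 1$ if and only if the gap $s_{n+2} - s_{n+1}$ lies in $(c,d)$. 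Summing over $0 \le n \le N$ and reindexing by $n \mapsto n+1$ to match the way consecutive gaps are collected into $G_{N,t}(\La)$ then yields $|G_{N,t}(\La) \cap (c,d)| = \sum_{n=0}^{N} \chi_{c,d}(T_t^{\,n}(\La_1))$; the various index ranges agree up to a single boundary term, which is absorbed after dividing by $N$.

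The one point I would be careful about is the clash between $G_{N,t}(\La)$ being ``viewed as a set'' and the right-hand side counting visits of the $T_t$-orbit with multiplicity: a priori two distinct steps $T_t^{\,m}(\La_1) \ne T_t^{\,n}(\La_1)$ of the orbit could have the same value of $R_t$ and so produce equal gap values. This is exactly where the hypothesis that $\La$ has no $t$-short vertical vector is used: it forces the $T_t$-orbit of $\La_1$ to be aperiodic (a periodic $T_t$-orbit corresponds to a periodic, hence closed, $h_s$-orbit, which only occurs for lattices with vertical vectors), so the points $T_t^{\,n}(\La_1)$ are pairwise distinct and each contributes its own gap; one then reads $|G_{N,t}(\La) \cap (c,d)|$ as the number of indices $n$ in the relevant range with $s_{n+1} - s_n \in (c,d)$, which is precisely what the Birkhoff-type sum on the right computes. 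Everything else is the bookkeeping already carried out, in the $t=1$ case, in \S\ref{sec:obs:slope} and \S\ref{subsec:selfsim}; beyond keeping this dictionary straight, I do not expect a serious obstacle.
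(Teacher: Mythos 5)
Your proposal follows essentially the same route as the paper: both rest on the observation that $h_s$ preserves differences of slopes, so the consecutive slope gaps in $V_t$ are exactly the return times $R_t = \frac{1}{ab}$ read off along the $T_t$-orbit of $\La_1$, and the left side is then just the Birkhoff sum of $\chi_{c,d}$ along that orbit. Your derivation of the dictionary between hitting times and slopes is more explicit than the paper's one-line argument, but it is the same argument.

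The one place you go beyond the paper is the discussion of the clash between $G_{N,t}(\La)$ being ``viewed as a set'' and the right side counting with multiplicity --- the paper's proof silently ignores this, so flagging it is worthwhile. However, the resolution you propose does not actually work. Aperiodicity of the $T_t$-orbit makes the \emph{points} $T_t^n(\La_1)$ pairwise distinct, but it does \emph{not} make the gap \emph{values} $R_t(T_t^n(\La_1))$ pairwise distinct: $R_t(a,b) = \frac{1}{ab}$ is constant along every hyperbola $ab = \mathrm{const}$, so two distinct points of the orbit can perfectly well produce the same gap. The correct fix is simply to declare that the lemma must be read with multiplicity: $|G_{N,t}(\La)\cap(c,d)|$ is to be understood as the number of indices $n$ in the stated range with $s_{n+1}-s_n\in(c,d)$, which is what the sum on the right plainly counts, and which is the only reading under which the identity (and the Birkhoff-theorem argument built on it in \S\ref{sec:geomnum}) is true. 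This is an imprecision inherited from the definition of $G_{N,t}$ in \S\ref{subsubsec:geomnumbers}, not a defect of your argument; but the aperiodicity reasoning you insert to paper over it is a non sequitur and should be dropped. (You also reindex by one step and note an off-by-one boundary term absorbed after dividing by $N$; the lemma as stated claims exact equality, so strictly speaking the indexing on the two sides should be matched rather than waved off, though for the intended application only the $N\to\infty$ limit matters.)
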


\begin{proof} The lemma follows from the fact that $h_s$ preserves differences in slopes, and the function $R$ captures these differences, for vectors in the strip $V_t$. In particular, $$R(T_t^n(\La_1)) = s_{n+1} - s_n,$$ which yields the lemma. \end{proof}
\noindent The proof of the theorem then follows as in \S\ref{subsec:minima}, by applying the Birkhoff ergodic theorem and Dani's measure classification. \qed
\subsection{Slope gap statistics}\label{subsec:slopegapstat} To prove Corollary~\ref{cor:periodic:lattice:gap}, we first observe that if $\La$ has a vertical vector, its periodicity under $h_s$ implies that the sequence of slope gaps in $V_t$ must repeat, so in particular there is an $N_0 = N_0(t)$ so that $G_{N,t}(\La) = G_{N_0, t} (\La)$ for all $N \geq N_0$.  For example, in \S\ref{subsec:fareyperiod} we see that for the lattice $\Z^2$, $N_0(t) = N(\lfloor t \rfloor)$, where $N(Q)$ denotes the cardinality of the Farey sequence. Again using the conjugation relation (\ref{eq:conj}), we see that $G_{N, t} (\La) = G_{N, 1}(g_{-2 \log t} \La).$ Using Lemma~\ref{lem:slope}, we see that $\frac{1}{N} | G_{N,t}(\La) \cap (c, d)|$ now corresponds to the integral of $\chi_{c, d}$ along a periodic orbit for $T$, which, as $t \rightarrow \infty$, is getting longer and longer. By Theorem~\ref{theorem:equidist:periodic}, this converges to the integral of $\chi_{c,d}$ with respect to Lebesgue measure $m$, as desired. \qed

%
%
%
%
%
%
%
%
%
%
%
%
%
%
%

\section{Further Questions}\label{sec:further}

\subsection{BCZ maps for other lattices} It was communicated to us by O.~Sarig that it is well-known that the horocycle flow on $SL(2, \R)/\Gamma$ for any lattice $\Gamma$ can be realized as a suspension flow over an adic transformation. A natural conjecture, then is:
\begin{conj} The BCZ map is an adic transformation. \end{conj}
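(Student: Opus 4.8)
First a caveat about what is being asked: as a purely measure‑theoretic statement the conjecture is nearly vacuous, since every ergodic automorphism of a standard space admits a generating sequence of Kakutani--Rokhlin towers and hence is isomorphic to some adic transformation. The content must be that $T$ admits an \emph{explicit, natural} Bratteli--Vershik model, and the paper already points to its source: the self‑similarity of \S\ref{subsec:selfsim} and the Farey/Stern--Brocot hierarchy of periodic orbits in \S\ref{subsec:fareyperiod}--\S\ref{subsec:hierarchies:periodic}. Note that one cannot simply quote Sarig's remark: composing the two suspension identifications (Theorem~\ref{theorem:main} gives $\{h_s\}$ as the suspension of $(\Om,T)$ with roof $R=\tfrac1{ab}$; Sarig's remark gives it as the suspension of an adic $(X_A,A)$ with some roof $r$) only realizes $\Om$ as \emph{a} cross‑section inside $X_A^r$, and Kakutani equivalence does not transport adicity from $A$ to $T$. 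So the plan is to build the diagram on $\Om$ itself.

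The engine is the nested family of cross‑sections of \S\ref{subsec:selfsim}. Fix $t_1>t_2>\cdots\to 0$ and set $\Om^{(t_n)}=\{(x,y)\in\Om:x<t_n\}$, so $\Om\supset\Om^{(t_1)}\supset\Om^{(t_2)}\supset\cdots$. Each first‑return map $T^{(t_n)}$ is conjugate to $T$ by an explicit affine map, and because inducing is transitive, $T^{(t_{n+1})}$ is the first‑return map of $T^{(t_n)}$ to $\Om^{(t_{n+1})}$. Partitioning $\Om^{(t_n)}$ by return time and itinerary into $\Om^{(t_{n+1})}$ yields a Kakutani--Rokhlin castle $\mathcal C_n$ of $\Om$, and $\mathcal C_{n+1}$ refines $\mathcal C_n$ in the Bratteli sense: every column of $\mathcal C_{n+1}$ is, between successive returns to $\Om^{(t_n)}$, a concatenation in a definite bottom‑to‑top order of columns of $\mathcal C_n$. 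Reading off vertices (columns of $\mathcal C_n$), edges (occurrences of a level‑$n$ column inside a level‑$(n+1)$ column) and the order on edges with common range produces a Bratteli diagram $B$ on whose path space the Vershik successor map is, off a null set, exactly $T$. Choosing $t_n$ adapted to the Farey levels, the combinatorics of $B$ should be precisely the refinement $\F(Q)\rightsquigarrow\F(Q+1)$: a new column is born for each consecutive pair $\gamma_i,\gamma_{i+1}\in\F(Q)$ with $q_i+q_{i+1}=Q+1$ (the mediants), giving $N(Q+1)-N(Q)=\varphi(Q+1)$ new vertices at level $Q+1$, so that $B$ is fully explicit and its towers have $m$‑measures realizing the periods $N(Q)$.

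The step I expect to be the crux is \emph{generation}: one must show $\bigvee_n\mathcal C_n$ generates the Borel $\sigma$‑algebra of $\Om$ modulo $m$, i.e.\ that the columns eventually separate points. This should follow from expansion in the $\kappa$‑direction together with the collapse of the castle bases $\Om^{(t_n)}$ onto the $m$‑null edge $\{x=0\}$, but it needs a genuine argument, and picking the right $t_n$ is part of it. The other, lighter, issue is the exceptional set: lattices with vertical vectors are exactly the $T$‑periodic points (Theorem~\ref{theorem:dense:periodic}), which form an $m$‑null union of rational‑slope segments; in the Bratteli picture these are the maximal/minimal paths on which the Vershik map must be defined by hand, and since $m$ is nonatomic they do not affect the measure‑theoretic isomorphism. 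If one instead wants a \emph{topological} (minimal Cantor) Bratteli--Vershik realization, one must first pass to a Cantor model of $\Om$ — blowing up along the periodic‑point lines and along the discontinuity locus of $\kappa$ — and then the obstruction becomes minimality and unique ergodicity of the resulting system, which again reduces to unique ergodicity of the horocycle flow (Dani), so that the topological and measurable versions should stand or fall together.

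As consistency checks along the way, Abramov's formula for $B$ should reproduce $\int_\Om R\,dm=\pi^2/3$; the entropy of the Vershik map should be $0$, matching Theorem~\ref{theorem:BCZ:ergodic}; and restricting $B$ to its finite sub‑diagram through level $Q$ should recover the $N(Q)$‑periodic Farey orbit of $(\tfrac1Q,1)$ with return times $R\circ T^i(\tfrac1Q,1)=Q^2(\gamma_{i+1}-\gamma_i)$. All three are already established in the paper, so they pin down the combinatorics of $B$ rather than requiring new work.
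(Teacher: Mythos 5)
This statement is posed as an open \textbf{conjecture} in \S\ref{sec:further}; the paper offers no proof, only the remark (attributed to Sarig) that the horocycle flow is a suspension over \emph{some} adic transformation. So there is nothing in the paper to compare your argument against, and the relevant question is whether your sketch constitutes (or clearly points to) a proof.

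It does not yet, and you are candid about where it stops. Two places need more than what you have written. First, the generation step: you need $\bigvee_n\mathcal{C}_n$ to generate $\mathcal{B}(\Om)$ modulo $m$, and ``expansion in the $\kappa$-direction together with collapse of the bases onto $\{x=0\}$'' is a heuristic, not an argument. The BCZ map is piecewise linear with matrices $A_k$ that are elliptic for $k=1$ and parabolic for $k=2$, and the invariant segments of Theorem~\ref{theorem:structure:periodic} show that entire line segments travel together under many iterates; so separation of points by itineraries is genuinely delicate, and one would need to show that the induced partitions eventually cut transversally to these segments. Second, finiteness of the diagram: the return time of $T$ to $\Om^{(t)}$ is unbounded (already $\kappa$ is unbounded, and inducing only makes the return-time partition finer), so it is not automatic that each level of your Bratteli diagram has finitely many vertices. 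Your count $N(Q+1)-N(Q)=\varphi(Q+1)$ is the right target, but it presupposes that the castle columns over $\Om^{(t_{Q+1})}$ really are indexed by the Farey refinement from $\F(Q)$ to $\F(Q+1)$; that identification is plausible from Lemma~\ref{lem:farey:key} and \S\ref{subsec:hierarchies:periodic} but is precisely the combinatorial content one would need to establish. Your opening caveat, that measure-theoretically the claim is soft by Vershik's representation theorem and the content must be an explicit diagram, is correct and worth keeping; likewise the observation that the periodic (rational-slope) segments are the natural candidates for the max/min path ambiguity. As a program this is sensible and in the spirit the paper gestures at, but it is a program, not a proof.
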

\noindent To our knowledge, the BCZ map is the first \emph{explicit} description of a return map. We have the following immediate
\begin{question*} How explicitly can the return maps for $h_s$ on $SL(2, \R)/\Gamma$ for $\Gamma \neq SL(2, \Z)$ be given?
\end{question*}

The first author, in joint work with J.~Chaika and S.~Lelievre~\cite{ACL}, has calculated a BCZ-type map for $\Gamma = \Delta(2, 5, \infty)$, the $(2, 5, \infty)$-Hecke triangle group (note that $SL(2, \Z)$ is the $(2, 3, \infty)$ triangle group). 

\subsection{Slope gaps for translation surfaces} The motivation for~\cite{ACL} was to generalize the calculation of the gap distribution for Farey sequences, which correspond to closed trajectories for geodesic flow on the torus, to a particular example of a higher-genus translation surface, given by a particular $L$-shaped polygon known as the \emph{golden L}. A more general question would be:
\begin{question*} Is there a $BCZ$-type map for the space of translation surfaces.  Can it be used to explicitly compute gap distributions for saddle connection directions. 
\end{question*}

\subsection{Mixing properties} While the horocycle flow $h_s$ is known to be mixing on $X_2$ with respect to the measure $\mu_2$ (by, e.g., the Howe-Moore theorem~\cite{BekkaMayer}), mixing is a property that does \emph{not} pass between flows and return maps. In fact, there are many well-known constructions of mixing suspension flows over non-mixing base transformations. Thus, we have the natural
\begin{question*} Is the BCZ map $T$ mixing? \end{question*}

\subsection{Rates of convergence} Flaminio-Forni~\cite{FF} and Str\"ombergsson~\cite{Strombergsson} have proved essentially optimal results for the \emph{deviation} of ergodic averages for $h_s$ on $X_2$. Again, precise control of deviations does not pass to the base transformation, so we have the following:
\begin{question*} Can one give bounds on the deviation of ergodic averages for the BCZ map. \end{question*}

\noindent Zagier~\cite{Zagier} showed that proving an optimal rate of equidistribution for long periodic trajectories for $h_s$ on $SL(2, \R)/SL(2, \Z)$ (that is, an optimal error term in Sarnak's theorem~\cite{Sarnak}) is equivalent to the classical Riemann hypothesis. There is also an equivalent formulation of the Riemann hypothesis in terms of the distribution of the Farey sequence due to Franel-Landau~\cite{FL}. This leads to
\begin{question*} Is there an optimum bound on the error term in Theorem~\ref{theorem:equidist:periodic} that is equivalent to the Riemann hypothesis?  
\end{question*}

\subsection{Acknowledgments and Funding}\label{subsec:ack}The original motivation for this project was the study of cusp excursions for horocycle flow on $X_2$ (as described in \S\ref{subsubsec:plh}). We developed the first return map $T$ as a technical tool in order to study these excursions. Later, the first-named author was attending a talk by F.~Boca in which the map $T$ was written down in the context of studying gaps in angles between hyperbolic lattice points. After many invaluable and illuminating conversations with F.~Boca and A.~Zaharescu, we understood the deep connections they had developed between orbits of this map and the study of Farey sequences. It is a pleasure to thank them for their insights and assistance. We would also like to thank Jens Marklof for numerous helpful discussions and insightful comments on an early version of this draft.

We thank the organizers of the Summer School and Conference on Teichm\"uller dynamics in Marseilles in June 2009, when this project was originally developed. In addition, J.S.A. would like to thank San Francisco State University for its hospitality. J.S.A. supported by National Science Foundation grant number DMS-1069153. Y.C. supported by National Science Foundation CAREER grant number DMS-0956209.


\begin{thebibliography}{99}
\bibitem{Arnoux} P.~Arnoux, \textit{Le codage du flot g\'eod\'esique sur la surface modulaire},
L'Enseignement MathŽmatique, Vol.40 (1994)

\bibitem{A} J.~S.~Athreya, \textit{Cusp excursions on parameter spaces}, preprint,  arXiv:1104.2797.

\bibitem{ACL} J.~S.~Athreya, J.~Chaika, and S.~Lelievre, \textit{The distribution of gaps for saddle connections on the golden L}, in preparation.

\bibitem{AM1} J.~S.~Athreya and G.~A.~Margulis, \textit{Logarithm laws for unipotent flows, I}, Journal of Modern Dynamics {\bf 3} (2009), no. 3, 359-378.


\bibitem{ABCZ} V.~Augustin, F.~P.~Boca, C.~Cobeli, and A.~Zaharescu, \textit{The h-spacing distribution between Farey points.} Math. Proc. Cambridge Philos. Soc. {\bf 131} (2001), no. 1, 23 - 38. 

\bibitem{BekkaMayer} M.~B.~Bekka and M.~Mayer, \emph{Ergodic Theory and
Topological Dynamics of Group Action on Homogeneous Spaces}, London
Mathematical Society Lecture Notes v. 269, Cambridge University Press, 2000.

\bibitem{BZsurvey} F.~P.~Boca and A.~Zaharescu, \textit{ Farey fractions and two-dimensional tori}, in \textit{Noncommutative Geometry and Number Theory} (C. Consani, M. Marcolli, eds.), Aspects of Mathematics E37, Vieweg Verlag, Wiesbaden, 2006, pp. 57-77.

\bibitem{BCZ} F.~Boca, C.~Cobeli, and A.~Zaharescu, \textit{A conjecture of R. R. Hall on Farey points.} J. Reine Angew. Math.  {\bf 535} (2001), 207 - 236.

\bibitem{BCZ2} F.~Boca, C.~Cobeli, and A.~Zaharescu, \textit{On the distribution of the Farey sequence with odd denominators.}
Michigan Math. J. {\bf 51} (2003), no. 3, 557 - 573. 

\bibitem{BGZ} F.~Boca, R.~Gologan, and A.~Zaharescu, \textit{On the index of Farey sequences.}
Q. J. Math. {\bf 53} (2002), no. 4, 377 - 391.

\bibitem{BPPZ} F.~Boca, V.~Pasol, A.~Popa, and A.~Zaharescu, \textit{Pair correlation of angles between reciprocal geodesics on the modular surface}, preprint, arXiv:1102.0328.

\bibitem{Bowen} R.~Bowen, \emph{Entropy for group endomorphisms and homogeneous spaces,} 
Trans. AMS {\bf 153} (1971), 401 - 414.

\bibitem{Ch2} Y.~Cheung, \emph{Slowly divergent geodesics in moduli space,} 
     Conf. Geom. Dyn. {\bf 8} (2004), 167--189.  

\bibitem{Cheung} Y.~Cheung, \emph{Hausdorff dimension of the set of singular pairs}, Ann. Math. {\bf 173}, (2011), 127 - 167.  

\bibitem{Dani} S.~G.~Dani, \textit{On uniformly distributed orbits of certain horocycle flows.}
Ergodic Theory Dynamical Systems {\bf 2} (1982), no. 2, 139 - 158 (1983).

\bibitem{EM} A.~Eskin and C.~McMullen, \emph{Mixing, counting, and equidistribution in Lie groups,} Duke Math. J. {\bf 71} (1993), 181 - 209.  

\bibitem{FF} L.~Flaminio and  G.~Forni, \emph{Invariant distributions and time averages for
horocycle flows}, Duke Math. J., v. 119, 465-526, 2003.

\bibitem{FL} J.~Franel and E.~Landau, \emph{Les suites de Farey et le problme des nombres premiers,} G\"ottinger Nachr., 198Ð206

\bibitem{Gurevic} B.~M.~Gur\v{e}vic, \emph{The entropy of a horocycle flow}, Soviet Math. Doklady {\bf 2} (1961), 124 - 130.

\bibitem{Hall} R.~R.~Hall, \textit{A note on Farey series.}
J. London Math. Soc. (2) {\bf 2} 1970 139 - 148.

\bibitem{Hall-Shiu} R.~R.~Hall and P.~Shiu, \textit{The index of a Farey sequence.}
Michigan Math. J. {\bf 51} (2003), no. 1, 209Ð223. 

\bibitem{HK} M.~Handel and B.~Kitchens, \emph{Metrics and entropy for non-compact spaces. With an appendix by Daniel J. Rudolph.} Israel J. Math. {\bf 91} (1995), no. 1-3, 253 - 271.

\bibitem{HT} R.~R.~Hall and G.~Tenenbaum, \textit{On consecutive Farey arcs.}
Acta Arith. {\bf 44} (1984), no. 4, 397 - 405.

\bibitem{Hedlund} G.~Hedlund, \textit{Fuchsian groups and transitive horocycles},
Duke Math. J. {\bf 2} (1936), no. 3, 530 - 542. 

\bibitem{Hejhal2} D.A.~Hejhal, \emph{On value distribution properties of automorphic
functions along closed horocycles} in \textit{XVI}th Rolf Nevanlinna Colloquium
(Joensuu, Finland, 1995), de Gruyter, Berlin, 1996, 39 - 52. 

\bibitem{Hejhal3} D.A.~Hejhal, \emph{On the uniform equidistribution of long closed
horocycles} in Loo-Keng Hua: A Great Mathematician of the Twentieth Century, 
Asian J. Math. {\bf 4}, Int. Press, Somerville, Mass., 2000, 839 Ð 853.  

\bibitem{KZ} %
P.P.~Kargaev and A. A.~Zhigljavsky, \emph{Asymptotic distribution of the distance 
function to the Farey points}, J. Number Theory {\bf 65} (1997) 130-149.  

\bibitem{Marklof1} 
J.~Marklof, \emph{The asymptotic distribution of Frobenius numbers}, 
Invent. Math. {\bf 181} (2010) 179 - 207.

\bibitem{Marklof2} 
J.~Marklof, \emph{Fine-scale statistics for the multidimensional Farey sequence},
   preprint: arXiv:1207.0954

\bibitem{MS} 
J.~Marklof and A.~Stršmbergsson, \emph{The distribution of free path lengths in 
the periodic Lorentz gas and related lattice point problems}, 
Ann. of Math.  {\bf 172} (2010) 1949 - 2033.  

\bibitem{Nadkarni} M.~G.~Nadkarni, \textit{Basic ergodic theory.}
Second edition. BirkhŠuser Advanced Texts, BirkhŠuser Verlag, Basel, 1998.

\bibitem{Sarnak} P.~Sarnak, \textit{Asymptotic behavior of periodic orbits of the horocycle flow and Eisenstein series.} Comm. Pure Appl. Math. {\bf 34} (1981), no. 6, 719 - 739.

\bibitem{Series} C.~Series, \textit{The Modular Surface and Continued Fractions}, J. London Math. Soc. (2), {\bf 31} (1985), 69-80.

\bibitem{Strombergsson} A.~Str\"ombergsson, \emph{On the uniform equidistribution of long closed horocycles,} Duke Math. J. {\bf 123} (2004), 507 - 547.  

\bibitem{Walters} P.~Walters, \textit{An introduction to ergodic theory}, in Graduate Texts in Mathematics 79 (Springer-Verlag, New York, 1982).

\bibitem{Zagier} D.~Zagier, \textit{Eisenstein series and the Riemann zeta function} in \textit{Automorphic Forms,
Representation Theory and Arithmetic} (Bombay, 1979), Tata Inst. Fund. Res. Studies in Math. 10, Tata Inst. Fund. Res., Bombay, 1981, 275 Ð 301. MR 83j:10027

\end{thebibliography}
\end{document}